\numberwithin{equation}{section}
       \def\t{\theta}
\newcommand{\R}{{\mathbb R}} 
\newcommand{\C}{{\mathbb C}} 
\newcommand{\N}{{\mathbb N}} 
\newcommand{\Q}{{\mathbb Q}} 
\newcommand{\T}{{\mathbb T}} 
\newcommand{\D}{{\mathbb D}}
\newtheorem{teor}{Theorem}[section]
\newtheorem{lema}[teor]{Lemma}
\newtheorem{prop}[teor]{Proposition}
\newtheorem{defi}[teor]{Definition}
\newtheorem{example}[teor]{Example}
\theoremstyle{remark}
\newtheorem{remark}[teor]{Remark}
\DeclareMathOperator*{\esssup}{ess\,sup}
\title{Volterra operator acting on Bergman spaces of Dirichlet series}
\author[C. G\'omez-Cabello ]{Carlos G\'omez-Cabello}
\address{Departamento de Matem\'atica Aplicada II and IMUS, Escuela Politécnica Superior, Universidad de Sevilla, Calle Virgen de África, 7 41011 Sevilla, Spain}
\email{cgcabello@us.es}
\author[P. Lefevre]{Pascal Lefèvre}
\address{
Univ. Artois, UR 2462, Laboratoire de Mathématiques de Lens (LML), F-62 300 LENS,
FRANCE
}
\email{pascal.lefevre@univ-artois.fr}
\author[H. Queff\'elec]{Herv\'e Queff\'elec}
\address{Univ. Lille Nord de France, USTL, Laboratoire Paul Painlevé U.M.R. CNRS 8524, F-59 655
VILLENEUVE D’ASCQ Cedex, FRANCE
}
\email{herve.queffelec@univ-lille.fr
}
\date{\today}
\begin{document}

\maketitle 
\begin{abstract}
	Since their introduction in 1997, the Hardy spaces of Dirichlet series have been broadly and deeply studied. The increasing interest sparked by these Banach spaces of Dirichlet series motivated the introduction of new such spaces, as the Bergman spaces of Dirichlet series $\mathcal{A}^p_{\mu}$ here considered, where $\mu$ is a probability measure on $(0,\infty)$. Similarly, recent lines of research have focused their attention on the study of some classical operators acting on these spaces, as it is the case of the Volterra operator $T_g$. In this work, we introduce a new family of Bloch spaces of Dirichlet series, the $\text{Bloch}_{\mu}$-spaces, and study some of its most essential properties. Using these spaces we are able to provide a sufficient condition for the Volterra operator $T_g$ to act boundedly on the Bergman spaces $\mathcal{A}^p_{\mu}$. We also establish a necessary condition for a specific choice of the probability measures $\mu$. Sufficient and necessary conditions for compactness are also proven. The membership in Schatten classes is studied as well. Eventually, a radicality result is established for Bloch spaces of Dirichlet series.
\end{abstract}
\tableofcontents

\section{Introduction}
Since the introduction of the first Banach spaces of Dirichlet series in 1997 by  Hedenmalm, Lindqvist, and Seip \cite{HedLinSeip}, the interest for these spaces has done nothing but increase. In their work, Hedenmalm et al. introduced the first Hardy spaces in the setting of Dirichlet series, namely, the spaces $\mathcal{H}^2$ and $\mathcal{H}^{\infty}$. Later on, Bayart \cite{bayart1} extended the definition of the spaces to the range $1\leq p<\infty$. To do so, he defined the spaces as the completion of the Dirichlet polynomials on the $L^p(\T^{\infty})$ norm, where $\T^{\infty}$ denotes the infinite polytorus.

More recently, in \cite{pascal}, Bailleul and Lefèvre introduced another family of Banach spaces of Dirichlet series  and studied some of their most essential properties. More specifically, two different classes of Bergman spaces of Dirichlet series were defined. The first one was the Bergman spaces $\mathcal{A}^p_{\mu}$, $1\leq p<\infty$, with $\mu$ a probability measure on $(0,\infty)$ such that $0\in\text{supp}\{\mu\}$. The second class was $\mathcal{B}^p$, $1\leq p<\infty$, defined in a similar fashion to the $\mathcal{H}^p$-spaces but from a closure process in $L^p(\D^{\infty})$ instead of $L^p(\T^{\infty})$, where $\D^{\infty}$ denotes the infinite polydisc. The interesting thing about these two classes of spaces is that when they are defined in finite dimension, this is, whenever the Bohr lift to the infinite polydisc $\D^{\infty}$ depends on finitely many variables, the spaces coincide. Nonetheless, when working with infinitely many variables, these two classes of spaces show a rather different behaviour. In particular, this can be seen in the study of the boundedness of some relevant operators.

As in the case of the classical Banach spaces of analytic functions in the unit disc, in the Dirichlet series setting, some of the most remarkable operators subject study are the composition operators (\cite{gorheda}, \cite{bayart2}, \cite{maxim}, \cite{maximbrevig}) or the Volterra operators. For a Dirichlet series $g(s)=\sum_{n=1}^\infty a_nn^{-s}$, the Volterra operator of symbol $g$, denoted by $T_g$, acting on a convergent Dirichlet series $f$ is defined as
\begin{equation}\label{volterraop}
    T_gf(s):=-\int_s^{\infty}f(w)g'(w)dw.
\end{equation}
A first remarkable difference between the operator $T_g$ when considered in the setting of Banach spaces of Dirichlet series with respect to the unit disc classical spaces is that the integration operator cannot be embedded in the study of the operator $T_g$. Indeed, in the unit disc setting, considering as symbol $g$ the identity map, $T_g$ is essentially the integration operator. 
Nonetheless, since the identity map is not a Dirichlet series, these operators must be studied separately when dealing with Dirichlet series. For a real number $\theta$, we shall denote $\C_{\theta}=\{s\in\C:\Re(s)>\theta\}$. For $\theta=0$, we write $\C_+=\C_0$.

\ 
The boundedness of the operator $T_g$ is described through the membership of the \emph{symbol} $g$ to a certain space of functions $X$. To the best of the authors' knowledge, the first systematic study of the boundedness of the operator $T_g$ acting on Banach spaces of Dirichlet series was carried out in \cite{brevig} by Brevig, Perfekt, and Seip. There, a sufficient condition for boundedness on $\mathcal{H}^p$ was given. Namely, the membership of $g$ in $\text{BMOA}(\C_+)$. It was also established that whenever $T_g$ is bounded on $\mathcal{H}^p$ and, surprisingly, $p\in\Q^+$, then $T_g$ belongs to the space $\text{BMOA}(\C_{1/2})$. Moreover, it was shown that the sufficient condition fails to be necessary and viceversa.

\ 
After this seminal work, several papers studying the operator $T_g$ acting on other Banach spaces of Dirichlet series have appeared. In \cite{bommier}, Bommier-Hato gave a sufficient and a necessary condition for the boundedness of $T_g$ on the Hilbert version of a different family of weighted Bergman spaces of Dirichlet series, namely, the spaces $\mathcal{H}^p_w$. Bommier-Hato showed that the membership of $g$ to the Bloch space $\text{Bloch}(\C_+)$ is sufficient to guarantee the boundedness of $T_g$ on $\mathcal{H}^2_w$, as long as its Bohr lift depends on finitely many variables. In the same work, she also gave a necessary condition for boundedness in $\mathcal{H}^2_w$ in terms of the space $\text{Bloch}(\C_{1/2})$ of Dirichlet series. Regarding the sufficiency, very recently, Chen and Wang in \cite{chen} were able to extend the range of $p$ to $1\leq p<\infty$ as well as remove the previous restriction on the Bohr lift of the symbol. However, they either restricted themselves to the smaller space $\text{BMOA}(\C_+)$, being the membership to this space sufficient for boundedness; either they required 
the symbol $g$ to be $1$-homogeneous, that is
\[
g(s)=\sum_{\text{$p$ prime}}a_pp^{-s}.
\]
In the second case, they showed that $T_g$ is bounded on $\mathcal{H}^p_w$ if and only if $g\in\mathcal{H}^2_w$.

Regarding the spaces $\mathcal{A}^p_{\mu}$, the first study of the operator $T_g$ was tackled in \cite{prisa}. More precisely, Fu, Guo, and Yan considered a specific family of measures $\mu_{\alpha}$, $\alpha>0$ and gave a sufficient condition for the boundedness of $T_g$ when acting on $\mathcal{A}^p_{\alpha}=\mathcal{A}^p_{\mu_{\alpha}}$, $1\leq p<\infty$. They also provide a necessary condition for the boundedness of $T_g$ acting on $\mathcal{A}^2_{\alpha}$. Let us point out that in the work \cite{prisa} the authors consider the family of measures
\[
d\nu_{\alpha}(\sigma)=\frac{2^{\alpha-1}}{\Gamma(\alpha-1)}\sigma^{\alpha-2}d\sigma,\quad \alpha>1, \ \sigma>0.
\]
This is, they get rid of the exponential term in the definition of the measures $\mu_{\alpha}$ from \eqref{macaden1}. Observe that the measures $d\nu_{\alpha}$ are not probability measures, but we can still define in an identical manner the spaces $\mathcal{A}^p_{\nu_{\alpha}}$. In fact, the resulting $\mathcal{A}^p_{\nu_{\alpha}}$-norm and the $\mathcal{A}^p_{\mu_{\alpha}}$-norm happen to be equivalent (see \cite[Lemma 3.9]{GLQ2}); giving the same $\mathcal{A}^p_{\mu}$ spaces as a result of the \emph{a priori} different completion processes. Because of this, all the results obtained in this work are also valid for the family of measures considered in \cite{prisa}.

\ 

The organisation of the paper is as follows. 
\begin{itemize}
    \item In Section \ref{sec:bloch} we introduce a new family of Bloch spaces of Dirichlet series and present some of its most essential properties.
    \item  In Section \ref{sec:volterrainbloch} we characterise the symbols giving rise to bounded Volterra operators on the spaces $\text{Bloch}(\C_+)$ of Dirichlet series. This will allow us to introduce some ideas that will appear later in the  proof of the necessary condition. It will also show how different the difficulty between the characterisation of the boundedness in these spaces and the spaces $\mathcal{A}^p_{\mu}$ is.
    \item The proof of the sufficient and the necessary condition, as in the $\mathcal{H}^p$-spaces, requires 
    integral representations of the norm. These representations are given by means of Littlewood-Paley type results. This is the content of Section \ref{section:LP}. 
    \item  In Section \ref{sec:volterraonberg} we give a sufficient condition for the boundedness of $T_g$ on $\mathcal{A}^p_{\mu}$ depending on the measure $\mu$. More precisely, it will be sufficient for the symbol to belong to the Bloch-type space $\text{Bloch}_{\mu}(\C_+)$. In particular, the sufficient condition given in \cite{prisa} is contained in our work as a particular choice of the measure $\mu$. We also provide a necessary condition for the special case $\mathcal{A}^p_{\alpha}$, where $\alpha>-1$ and $p\in[1,\infty)$, thus, improving the already known results from \cite{prisa} by extending both the range of $p$ and $\alpha$. The proof is mainly based on the estimate of the norm of the pointwise evaluation functionals in $\mathcal{A}^p_{\alpha}$ from our recent paper \cite[Theorem 5.2]{GLQ2}. Necessary and sufficient conditions for compactness are given too, as well as a characterisation of the membership in the Schatten classes.
    \item In the last section, we provide a radicality result inspired by the recent work \cite{alem}.
    \item Section \ref{sec:dirichlet} and Section \ref{sec:apmuetal} are expository and they are devoted to recall all the results and definitions which will be needed in subsequent sections.
\end{itemize}

\ 

From now on, unless indicated otherwise, $\mu$ will stand for a probability measure on $(0,+\infty)$ such that $0\in\text{supp}(\mu)$, where $\text{supp}(\mu)$ denotes the support of the measure $\mu$. In order to shorten the statements of some results throughout the paper, we shall say that $\mu$ is an \emph{admissible measure} whenever it satisfies the latter requirements.

\ 

We will use the notation $f(x)\lesssim g(x)$ if there is some constant $C>0$ such that $|f(x)|\leq C|g(x)|$ for all $x$. If we have simultaneously that $f(x)\lesssim g(x)$ and $g(x)\lesssim f(x)$, we write $f\approx g$.

\

\noindent
\textbf{Acknowledgements.} The authors would like to thank Professor Manuel D. Contreras and Professor Luis Rodríguez Piazza for carefully reading the paper, as well as for their many valuable and insightful remarks, which have notably contributed to improve the exposition of the results and their statements.

\section{Dirichlet series}\label{sec:dirichlet}
A convergent Dirichlet series $\varphi$ is a formal series
$$
\varphi(s)=\sum_{n=1}^{\infty}a_nn^{-s},
$$
which converges somewhere, equivalently, in some half-plane $\C_{\theta}$ (see \cite[Lemma 4.1.1]{queffelecs}) and we write $\varphi\in\mathcal{D}$. 
When a Dirichlet series  $\varphi$ has only finitely many non-zero coefficients, we say that $\varphi$ is a \emph{Dirichlet polynomial}. If $\varphi$ just has one non-zero coefficient, the series $\varphi$ is a \emph{Dirichlet monomial}. 
The first remarkable difference between Taylor series and Dirichlet series is that the latter converge in half-planes. 
In fact, there exist several \emph{abscissae of convergence} which, in general, do not coincide. The most remarkable ones are the following:  
$$
\sigma_{c}(\varphi)=\inf\{ \sigma:  \sum_{n=1}^{\infty}a_nn^{-s} \textrm{ is convergent on $\C_{\sigma}$}\};
$$
$$
\sigma_{u}(\varphi)=\inf\{ \sigma:  \sum_{n=1}^{\infty}a_nn^{-s} \textrm{ is uniformly convergent on $\C_{\sigma}$}\};
$$
$$
\sigma_{b}(\varphi)=\inf\{ \sigma:  \sum_{n=1}^{\infty}a_nn^{-s} \textrm{ has a bounded analytic extension to $\C_{\sigma}$} \};
$$
$$
\sigma_{a}(\varphi)=\inf\{ \sigma:  \sum_{n=1}^{\infty}a_nn^{-s} \textrm{ is absolutely convergent on $\C_{\sigma}$}\}.
$$

Let us mention that $\sigma_u=\sigma_b$ by Bohr's theorem (see \cite[Th.6.2.3]{queffelecs}).

The infinite polytorus $\T^{\infty}$ can be identified with the group of complex-valued characters $\chi$ on positive integers, more precisely, completely multiplicative maps satisfying $|\chi(n)|=1$ for all $n\in\N$ and $\chi(mn)=\chi(m)\chi(n)$, for all $m,n\in\N$. Then, given $\chi\in\T^{\infty}$ and a Dirichlet series $f(s)=\sum_{n\geq1}a_nn^{-s}$ converging in some half-plane $\C_{\theta}$, the Dirichlet series
\[
f_{\chi}(s)=\sum_{n=1}^{\infty}a_n\chi(n)n^{-s}.
\]
is the normal limit in $\C_{\theta}$ of some sequence $\{f_{i\tau_k}\}_k$, where $\{\tau_k\}\in\R$.

We recall that, when $f$ is a Dirichlet series converging in some half place $\C_a$, then, for $s_0\in\overline{\C_0}$, we denote
\begin{equation*}
\forall s\in\C_a\,,\qquad f_{s_0}(s)=f(s+s_0).
\end{equation*}

\

Given a convergent Dirichlet series $f$ and $\sigma>0$, we denote by $f_{\sigma}$ 
the convergent Dirichlet series resulting from translating $f$ by $\sigma$: $f_{\sigma}(s)=f(\sigma+s)$. 
It is worth noting that the horizontal translation can be regarded as an operator acting on convergent Dirichlet series. In this case, we denote it by $T_{\sigma}$ so that $T_{\sigma}f=f_{\sigma}$.  
It is noteworthy to observe that the $\mathcal{H}^p$ spaces remain stable under horizontal translations to the right or vertical translations, see for instance \cite[Theorem 11.20]{defant-peris}.

\section{A family of Bloch spaces of Dirichlet series}\label{sec:bloch}
The following Bloch spaces were introduced in \cite{bommier}. 
\begin{defi}\label{blochcl}
Let $\theta\geq0$. An analytic function $f$ in the half-plane $\C_{\theta}$ belongs to the Bloch space $\text{Bloch}(\C_{\theta})$ if
\begin{equation}
    \sup_{\sigma+it\in\C_{\theta}}(\sigma-\theta)|f'(\sigma+it)|<\infty.
\end{equation}
\end{defi}
These spaces are Banach spaces of analytic functions with the norm
\[
\|f\|_{\text{Bloch}(\C_{\theta})}:=\sup_{\sigma+it\in\C_{\theta}}(\sigma-\theta)|f'(\sigma+it)|+|f(1)|.
\]
In particular, the space $\text{Bloch}(\C_{\theta})\cap\mathcal{D}$ is a closed subspace of $\text{Bloch}(\C_{\theta})$.
\begin{remark}\label{sismuertos}
   Observe that,  by Cauchy's inequality, we have that $H^{\infty}(\C_{+})$, the space of bounded analytic functions in $\C_{+}$, is a subspace of $\text{Bloch}(\C_{+})$. In particular,
   \[
   \mathcal{H}^{\infty}\subset \text{Bloch}(\C_{+})\cap\mathcal{D}.
   \]
   Obviously, this is true for every half-plane $\C_{\theta}$, $\theta\geq0$.
\end{remark}
\begin{remark}\label{concejo}
    For $\theta\geq0$, we also have that $\text{Bloch}(\C_{\theta})\cap\mathcal{D}\subset\mathcal{H}^{\infty}(\C_{\theta+\varepsilon})$, for every $\varepsilon>0$. Let us point out that this was already observed for the case $\theta=0$ in \cite[Lemma 3]{bommier}. We prove it for $\theta=0$, being the argument being identical for the general case. Let $\varepsilon>0$. Since $f\in\mathcal{D}$, there exists $M>0$ such that $f\in\mathcal{H}^{\infty}(\C_M)$. If $M\leq\varepsilon$, we are done. Hence, we suppose $M>\varepsilon$. Take $s\in\C_{\varepsilon}$, then
    \begin{align*}
        |f(s)|&\leq |f(s+M-\varepsilon)|+\int_{[s,s+M-\varepsilon]}|f'(z)||dz|\\&
        \leq
        \|f\|_{\mathcal{H}^{\infty}(\C_M)}+(M-\varepsilon)\frac{A}{\varepsilon}<\infty.
    \end{align*}
    Therefore, $f$ is bounded in every half-plane $\C_{\varepsilon}$, $\varepsilon>0$, as desired.
\end{remark}

Since the spaces were introduced in relation to the boundedness of the Volterra operator $T_g$ on some Hilbert spaces of Dirichlet series, the most remarkable cases are $\theta=0$ and $\theta=1/2$. It is also worth mentioning that the space $\text{Bloch}(\C_+)$ was treated with considerable detail in \cite{queffetal}.

\begin{defi}
   Let $\omega\colon(0,1]\to(0,\infty)$ be a measurable map such that 
   \begin{equation}\label{condicionomega}
       \omega\in L^{\infty}([\varepsilon,1])\quad \text{and}\quad 1/\omega\in L^{\infty}([\varepsilon,1]), \quad \text{for all $\varepsilon>0$}.
   \end{equation}
 We define the $\text{Bloch}_{\omega}(\C_+)$ space as the collection of analytic functions $f$ in $\C_+$ such that
   \[
   \esssup_{\substack{0<\sigma\leq1\\ t\in\R}}\omega(\sigma)|f'(\sigma+it)|+\|f\|_{H^{\infty}(\C_1)}<\infty.
   \]
\end{defi}
\begin{remark}
    The first condition in \eqref{condicionomega} ensures that the spaces $\text{Bloch}_{\omega}(\C_+)$ have an interesting structure since, otherwise, the spaces merely consist of constant functions.  
\end{remark}
\begin{remark}
 The second condition in \eqref{condicionomega} guarantees that the norm convergence implies local uniform convergence. This ensures that these spaces $\text{Bloch}_{\omega}(\C_+)$ are Banach spaces of analytic functions with the norm
\[
\|f\|_{\text{Bloch}_{\omega}(\C_+)}:=\esssup_{\substack{0<\sigma\leq1\\ t\in\R}}\omega(\sigma)|f'(\sigma+it)|+\|f\|_{H^{\infty}(\C_1)}.
\]
Clearly, we also have that $\text{Bloch}_{\omega}(\C_+)\cap\mathcal{D}$ is a closed subspace of $\text{Bloch}_{\omega}(\C_+)$.
\end{remark}

\begin{remark}
    In the case $\omega(\sigma)=\sigma$, we have that
    \[
    \text{Bloch}_{\omega}(\C_+)\subsetneq \text{Bloch}(\C_+).
    \]
    Let us see this. Consider $f\in\text{Bloch}_{\omega}(\C_+)$. Then, 
    \[
    \sup_{\substack{0<\sigma\leq1\\ t\in\R}}\sigma|f'(\sigma+it)|+\|f\|_{H^{\infty}(\C_1)}<\infty.
    \]
       Hence, by definition, $f\in H^{\infty}(\C_1)$. Observe also that, by the same argument from Remark \ref{concejo}, $f\in H^{\infty}(\C_{1/2})$. Then, by Cauchy's inequality, we have that 
    \[
    (\sigma-1/2)|f'(\sigma+it)|\leq M,\quad \text{for all $\sigma>1/2$ and $t\in\R$}.
    \]
    Then, 
    \[
    \frac{\sigma}{2}|f'(\sigma+it)|\leq M,\quad \text{for all $\sigma>1$ and $t\in\R$}.
    \]
     Putting all together, we obtain
    \[
     \sup_{\substack{\sigma>0\\ t\in\R}}\sigma|f'(\sigma+it)|<\infty,
    \]
    as desired. For the strict inclusion, it suffices to consider the function $f(z)=\log z$, $z\in\C_+$. Clearly,
    \[
    \sigma|f'(\sigma+it)|\leq1.
    \]
    However, $f$ is not bounded in any half-plane $\C_{\varepsilon}$, $\varepsilon>0$.
\end{remark}
\begin{remark}\label{esmuyutil}
 For the special choice $\omega(\sigma)=\sigma$, we have that
  \[
  \text{Bloch}_{\omega}(\C_+)\cap\mathcal{D}=\text{Bloch}(\C_+)\cap\mathcal{D}.
  \]
 The fact that $\text{Bloch}_{\omega}(\C_+)\cap\mathcal{D}\subset\text{Bloch}(\C_+)\cap\mathcal{D}$ follows immediately from the previous remark. For the converse inclusion, let $f\in\text{Bloch}(\C_+)\cap\mathcal{D}$. Remark \ref{concejo} gives the membership of $f$ to $H^{\infty}(\C_1)$. Regarding the first term of the norm $\text{Bloch}_{\omega}(\C_+)$, it is clearly finite since
  \[
\sup_{\substack{0<\sigma\leq1\\t\in\R}}\sigma|f'(\sigma+it)|\leq \sup_{\substack{0<\sigma\\t\in\R}}\sigma|f'(\sigma+it)|.
  \]
  \end{remark}
\begin{remark}
   Working as in Remark \ref{concejo}, we have that whenever $1/\omega$ is in $L^1(0,1)$, the resulting space $\text{Bloch}_{\omega}(\C_+)$ is contained in $H^{\infty}(\C_+)$, even in $A(\C_+)$, the algebra of bounded analytic functions in $\C_+$ which are also uniformly continuous in the same half-plane. 
    \end{remark}
Since we are interested in the $\text{Bloch}_{\omega}$ spaces because of their connection with the operator $T_g$ acting on the spaces $\mathcal{A}^p_{\mu}$, we shall pay special attention to a family of Bloch spaces where the expression of the weight $\omega$ is intimately connected to the measure $\mu$.

\ 
Let $h$ be a positive function such that $\int_0^{\varepsilon}h(\sigma)d\sigma>0$ for all $\varepsilon>0$ and $h\in L^1(0,\infty)$. In \cite{pascal}, Bailleul and Lefèvre introduced the following function associated to this density
\[
\beta_{h}(\sigma):=\int_0^{\sigma}(\sigma-u)h(u)du.
\]
However, it is possible to define the function $\beta_{\mu}$ for any admissible measure $\mu$ 
Indeed, in this case
\begin{equation}\label{betamu}
   \beta_{\mu}(\sigma):=
\int_0^{\sigma}(\sigma-u)d\mu(u).
\end{equation}
Notice that $\beta_h$ is nothing but writing $\beta_{\mu}$ when $d\mu(u)=h(u)du$. We may sometimes also write $\beta_{\mu}$ alternatively as:
\[
\beta_{\mu}(\sigma)=\int_0^{\sigma}\int_0^td\mu(u)dt
=\int_0^{\sigma}\mu([0,t])dt.
\]
To see this, it suffices to apply Fubini in \eqref{betamu}. Observe also that $\beta_{\mu}$ is non-decreasing on $(0,\infty)$. Moreover, since $\sigma\mapsto\int_0^{\sigma}d\mu(u)$ is non-decreasing too, we have that $\beta_{\mu}$ is a convex function on $(0,\infty)$. Furthermore, $\beta_{\mu}(\sigma)\leq\sigma$, which implies that $\lim_{\sigma\to\infty}\beta_{\mu}(\sigma)n^{-2\sigma}=0$ for all $n\geq2$. 

\ 

For our purposes, we shall write the probability measure $\mu$ as $$d\mu(\sigma)=h(\sigma)d\sigma+d\mu_s(\sigma),$$ where $h\in L^1(d\sigma)$ and $\mu_s$ stands for its singular part (see \cite[Theorem 6.10]{rudin}). 
\begin{defi}\label{mubloch}
 Let $\mu$ be an admissible measure and $\beta_{\mu}$ as in \eqref{betamu}. Let $h$ be as before and such that
 \begin{equation}\label{condicionh}
    h\in L^{\infty}([\varepsilon,1])\quad \text{and}\quad 1/h\in L^{\infty}([\varepsilon,1]), \quad \text{for all $\varepsilon>0$}.
 \end{equation} 
 We say that an analytic function $f$ in the half-plane $\C_+$ belongs to the Bloch space $\text{Bloch}_
    {\mu}(\C_+)$ if
    \begin{equation*}
        \|f\|_{\text{Bloch}_{\mu}(\C_+)}=\esssup_{\substack{0<\sigma\leq1 \\ t\in\R}}\omega(\sigma)|f'(\sigma+it)|+
        \|f\|_{H^{\infty}(\C_1)}
        <\infty,
    \end{equation*}
    where 
    \begin{equation}\label{omega}
        \omega(\sigma)=\sqrt{\frac{\beta_{\mu}(\sigma)}{h(\sigma)}}\cdot
    \end{equation}
\end{defi}
\begin{remark}
Observe that, by the definition of $\omega$ in \eqref{omega} and the function $\beta_{\mu}$ being continuous and bounded on $(0,1]$, the conditions in \eqref{condicionh} are equivalent to the ones in \eqref{condicionomega}.
\end{remark}
From now on, we shall work with the spaces $\text{Bloch}_{\mu}$, meaning that the subindex appearing in the $\text{Bloch}$ spaces will stand for the specific weight $\omega$ from \eqref{omega}, and not for a general weight.

\ 

The interest in defining these Bloch spaces in such a way arises from its connection to the boundedness of the Volterra operator $T_g$ on $\mathcal{A}^p_{\mu}$. As we shall see, we establish a sufficient condition depending on the measure $\mu$ defining $\mathcal{A}^{p}_{\mu}$ in terms of these Bloch-type spaces. To the best of the author's knowledge, these spaces are new in the literature.
\begin{remark}
The fact that we require $h$ to be non-zero almost everywhere in $(0,1]$ guarantees the correct definition of $\omega$.
\end{remark}
\begin{remark}
It is also worth mentioning why we restrict ourselves to the strip $\mathbb{S}=\{s:0<\text{Re}(s)<1\}$ instead of taking it in the whole real line as it is the case for the space $\text{Bloch}(\C_+)$. 
Indeed, if we take $d\mu=2e^{-2\sigma}d\sigma$, we can easily check that $\omega(\sigma)\approx\sigma$ in $(0,1)$ and $\omega(\sigma)\approx \sigma^{1/2}e^{\sigma}$ when $\sigma\to+\infty$. 
If the supremum were to be taken on the whole right half-plane, then the monomial $2^{-s}$ would fail to belong to this $\text{Bloch}_{\mu}$ space. 
To avoid this inconvenience, we restrict the supremum to the vertical strip $\mathbb{S}$.
\end{remark}
\begin{remark}\label{eslomismo}
If we consider the family of measures $\mu_{\alpha}$ given by
    \begin{equation}\label{macaden1}
        d\mu_{\alpha}(\sigma)=\frac{2^{\alpha+1}}{\Gamma(\alpha+1)}\sigma^{\alpha}e^{-2\sigma}\,d\sigma,\quad \alpha>-1,
\end{equation}
we have that in a neighbourhood of $0$, $\beta_{\mu_{\alpha}}(\sigma)\approx \sigma^{\alpha+2}$, as $\sigma\to0^+$. Therefore, in $(0,1)$,
\begin{align*}
    \omega(\sigma)=\sqrt{\frac{\beta_{\mu_{\alpha}}(\sigma)}{h(\sigma)}}\approx
    \sqrt{\sigma^{2}e^{2\sigma}}\approx \sigma.
\end{align*}
Hence, in this case, by Remark \ref{esmuyutil}, the associated Bloch$_{\mu_{\alpha}}(\C_+)\cap\mathcal{D}$ space coincides with the Bloch space $\text{Bloch}(\C_+)\cap\mathcal{D}$. 
\end{remark}
Naturally, we can define the $\text{Bloch}_{\mu}$ spaces in any half-plane $\C_{\theta}$, $\theta>0$, by requiring
\[
\sup_{\substack{\theta<\sigma\leq\theta+1\\t\in\R}}\omega(\sigma-\theta)|f'(\sigma+it)|+\|f\|_{H^{\infty}(\C_{\theta+1})}<\infty,
\]
where $\omega$ is as in Definition \ref{mubloch}.

\ 

As it was mentioned before, the motivation behind the introduction of such spaces is its connection to the boundedness of the Volterra operator on the Banach spaces of Dirichlet series $\mathcal{A}^p_{\mu}$. As we are about to see, Dirichlet series belonging to the spaces $\text{Bloch}_{\mu}(\C_+)$ have some good convergence properties.
\begin{lema}\label{born}
    Let $f$ be in $\emph{Bloch}_{\mu}(\C_+)$ and $\varepsilon>0$. Then, given $\varepsilon<\sigma\leq1$ and $t\in\R$,
      \begin{equation*}        |f(\sigma+it)|\lesssim\left(\frac{1}{(\beta_{\mu}(\varepsilon))^{1/2}}+1\right)\|f\|_{{\emph{Bloch}_{\mu}(\C_+)}}.        \end{equation*}
      Moreover, if $f\in\mathcal{D}$, then  $\sigma_b(f)=\sigma_b(f')\leq 0$.
\end{lema}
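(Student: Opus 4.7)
The plan is to bound $|f(\sigma+it)|$ for $\varepsilon<\sigma\leq 1$ by integrating $f'$ horizontally from $\sigma+it$ to $1+it$, controlling the integrand via the $\text{Bloch}_{\mu}$ norm, and extracting the $\beta_{\mu}(\varepsilon)^{-1/2}$ factor through a Cauchy--Schwarz argument.

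First I would start from the identity
\[
f(\sigma+it)=f(1+it)-\int_{\sigma}^{1}f'(u+it)\,du,
\]
so that $|f(\sigma+it)|\leq \|f\|_{H^{\infty}(\C_1)}+\int_{\varepsilon}^{1}|f'(u+it)|\,du$. By the definition of the norm, $\omega(u)|f'(u+it)|\leq \|f\|_{\text{Bloch}_{\mu}(\C_+)}$ for a.e.\ $(u,t)\in(0,1]\times\R$. Since $f'$ is continuous in $t$ and open sets of measure zero are empty, a Fubini argument upgrades this to the following: for a.e.\ $u\in(0,1]$, the inequality $|f'(u+it)|\leq \|f\|_{\text{Bloch}_{\mu}(\C_+)}/\omega(u)$ holds for every $t\in\R$. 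Hence the integral above is bounded by $\|f\|_{\text{Bloch}_{\mu}(\C_+)}\int_{\varepsilon}^{1}du/\omega(u)$.

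The main estimate is then the Cauchy--Schwarz bound
\[
\int_{\varepsilon}^{1}\frac{du}{\omega(u)}=\int_{\varepsilon}^{1}\sqrt{\frac{h(u)}{\beta_{\mu}(u)}}\,du\leq \left(\int_{\varepsilon}^{1}h(u)\,du\right)^{1/2}\left(\int_{\varepsilon}^{1}\frac{du}{\beta_{\mu}(u)}\right)^{1/2}.
\]
The first factor is at most $\mu((0,\infty))=1$ since $h$ is the density of the absolutely continuous part of the probability measure $\mu$. The second factor is at most $\beta_{\mu}(\varepsilon)^{-1/2}$ because $\beta_{\mu}$ is non-decreasing, so $\int_{\varepsilon}^{1}du/\beta_{\mu}(u)\leq (1-\varepsilon)/\beta_{\mu}(\varepsilon)$. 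Combining these pieces with the bound on $|f(1+it)|$ yields the announced pointwise estimate.

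For the second assertion, the pointwise bound just proved, together with $\|f\|_{H^{\infty}(\C_1)}<\infty$, shows that $f\in H^{\infty}(\C_{\varepsilon})$ for every $\varepsilon>0$, hence $\sigma_b(f)\leq 0$. Applying Cauchy's inequality on discs of radius $\varepsilon/2$ centred at points of $\C_{\varepsilon}$, which remain inside $\C_{\varepsilon/2}$ where $f$ is bounded, I obtain $\|f'\|_{H^{\infty}(\C_{\varepsilon})}\lesssim \varepsilon^{-1}\|f\|_{H^{\infty}(\C_{\varepsilon/2})}<\infty$, so $\sigma_b(f')\leq 0$ as well; the equality $\sigma_b(f)=\sigma_b(f')$ is a standard fact for Dirichlet series via Bohr's identity $\sigma_b=\sigma_u$. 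The only delicate step I anticipate is the measurability upgrade from the joint essential supremum to a fibrewise pointwise bound, but this is routine once the continuity of $f'$ in $t$ is used; the rest is a clean Cauchy--Schwarz computation exploiting the monotonicity of $\beta_{\mu}$ and the probability-measure normalisation.
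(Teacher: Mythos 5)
Your proof is correct and follows essentially the same route as the paper: write $f(\sigma+it)$ via $f(1+it)$ plus a horizontal integral of $f'$, control $|f'|$ by the $\text{Bloch}_{\mu}$ norm, and bound $\int_{\varepsilon}^{1}du/\omega(u)$ by $\beta_{\mu}(\varepsilon)^{-1/2}$ using Cauchy--Schwarz together with the monotonicity of $\beta_{\mu}$ and $\int h\leq 1$, then conclude $\sigma_b(f)=\sigma_b(f')\leq 0$ by boundedness on each $\C_{\varepsilon}$ and standard Cauchy/Bohr facts. The only differences --- grouping the Cauchy--Schwarz factors as $\sqrt{h}\cdot\beta_{\mu}^{-1/2}$ instead of $1\cdot\omega^{-1}$, and the explicit essential-supremum upgrade --- are cosmetic.
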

\begin{proof}
    Let $\varepsilon>0$ and $s=\sigma+it$, such that $\varepsilon<\sigma\leq1$. Then, by the definition of the $\text{Bloch}_{\mu}(\C_{+})$ space, we have, for $t\in\R$,
    \begin{align*}
    |f(\sigma+it)|&\leq|f(\sigma+it)-f(1+it)|+|f(1+it)|\\
    &\leq
    \int_{\sigma}^1|f'(x+it)|dx+\|f\|_{\mathcal{H}^{\infty}(\C_1)}
    \\
    &\leq\sup_{\substack{0<u<1\\ t\in\R}}\omega(u)|f'(u+it)|\int_{\sigma}^1\frac{dx}{\omega(x)}+\|f\|_{\mathcal{H}^{\infty}(\C_1)}\\
    &\leq \left(
\int_{\sigma}^1\frac{dx}{\omega(x)}+1
    \right)\|f\|_{\text{Bloch}_{\mu}(\C_+)}.
\end{align*}
Now, using the fact that $\beta_{\mu}$ is non-decreasing and $\int_0^1h(\sigma)d\sigma\leq1$, we obtain
\begin{align*}
 \left(\int_{\sigma}^1\frac{dx}{\omega(x)}\right)^2\leq\int_{\sigma}^1\frac{dx}{\omega^2(x)}=   \int_{\sigma}^1\frac{h(x)}{\beta_{\mu}(x)}dx
    \leq 
    \frac{1}{\beta_{\mu}(\sigma)}\int_{\sigma}^1h(x)dx\leq
    \frac{1}{\beta_{\mu}(\varepsilon)}\cdot
\end{align*}
Therefore,
\[
|f(\sigma+it)|\lesssim\left(\frac{1}{(\beta_{\mu}(\varepsilon))^{1/2}}+1\right)\|f\|_{\text{Bloch}_{\mu}(\C_{+})},\quad \sigma>\varepsilon.
\]
If $f\in\mathcal{D}$, by the very definition of $\sigma_b$, we have that $\sigma_b(f)\leq0$. The equality $\sigma_b(f)=\sigma_b(f')$ holds for every $f\in\mathcal{D}$, thanks to Cauchy integral formula.
\end{proof}

The $\text{Bloch}_{\mu}(\C_{\theta})$ spaces, $\theta\geq0$, satisfy the expected inclusion relations.
\begin{lema}\label{inclusion}
    Let $\theta>\eta$. Then, $\emph{Bloch}_{\mu}(\C_{\eta})\subset \emph{Bloch}_{\mu}(\C_{\theta})$ and, in fact, given $f\in\emph{Bloch}_{\mu}(\C_{\eta})$, $$\|f\|_{\emph{Bloch}_{\mu}(\C_{\theta})}\leq \|f\|_{\emph{Bloch}_{\mu}(\C_{\eta})}.$$
\end{lema}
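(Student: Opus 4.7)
The strategy is to compare the two summands of $\|f\|_{\text{Bloch}_\mu(\C_\theta)}$ with those of $\|f\|_{\text{Bloch}_\mu(\C_\eta)}$, using the inclusions $\C_\theta \subsetneq \C_\eta$ and $\C_{\theta+1} \subsetneq \C_{\eta+1}$ induced by $\theta > \eta$. The $H^\infty$ summand is easy: since $\C_{\theta+1} \subsetneq \C_{\eta+1}$, taking the supremum over the smaller half-plane immediately yields $\|f\|_{H^\infty(\C_{\theta+1})} \leq \|f\|_{H^\infty(\C_{\eta+1})}$.

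For the weighted derivative term, I would fix $\sigma \in (\theta, \theta+1]$ and $t \in \R$ and split into two cases. In the first case, $\sigma \leq \eta+1$ (nonempty only when $\theta < \eta+1$), one has $\sigma - \eta \in (\theta - \eta, 1] \subseteq (0, 1]$, so the very definition of $\text{Bloch}_\mu(\C_\eta)$ gives $\omega(\sigma - \eta)\,|f'(\sigma + it)| \leq \|f\|_{\text{Bloch}_\mu(\C_\eta)}$. Using the monotonicity of $\omega$ on $(0, 1]$ together with the inequality $\sigma - \theta < \sigma - \eta$, one obtains $\omega(\sigma - \theta) \leq \omega(\sigma - \eta)$, and therefore the desired bound.

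In the complementary case $\sigma > \eta + 1$, the point $\sigma + it$ lies in $\C_{\eta+1}$. Evaluating the $\text{Bloch}_\mu(\C_\eta)$ condition at $\sigma = \eta + 1$ gives $|f'(\eta + 1 + it)| \leq \|f\|_{\text{Bloch}_\mu(\C_\eta)} / \omega(1)$ for every $t \in \R$; a Phragmén--Lindelöf-type argument on the half-plane $\{\Re(s) \geq \eta + 1\}$ (where $f'$ inherits the needed boundedness from $f \in H^\infty(\C_{\eta+1})$ via Cauchy's inequality) then extends this to $|f'(\sigma + it)| \leq \|f\|_{\text{Bloch}_\mu(\C_\eta)} / \omega(1)$ for all $\sigma > \eta + 1$ and $t \in \R$. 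Multiplying by $\omega(\sigma - \theta) \leq \omega(1)$ (again by monotonicity, since $\sigma - \theta \leq 1$) finishes this case.

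The main obstacle, as I see it, is ensuring the monotonicity of $\omega$ on $(0, 1]$, which is crucial for keeping the constant in both cases equal to $1$. While this is not immediate from the hypotheses on $h$ in full generality, it holds for the natural measures of interest (cf.\ Remark~\ref{eslomismo}, where $\omega(\sigma) \approx \sigma$ for the family $\mu_\alpha$). If monotonicity is relaxed to merely an essentially nondecreasing behavior, the same argument yields the inclusion of spaces up to a multiplicative constant rather than with constant exactly~$1$; the clean statement of the lemma relies crucially on $\omega$ being truly nondecreasing.
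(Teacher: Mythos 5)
There is a genuine gap: your key step $\omega(\sigma-\theta)\leq\omega(\sigma-\eta)$ requires $\omega$ to be nondecreasing on $(0,1]$, and this is not among the hypotheses. Recall $\omega(\sigma)=\sqrt{\beta_{\mu}(\sigma)/h(\sigma)}$; only $\beta_{\mu}$ is monotone, while the conditions \eqref{condicionh} control $h$ and $1/h$ merely on $[\varepsilon,1]$ for each $\varepsilon>0$, so $h$ may oscillate arbitrarily near $0$ and $\omega$ need not be (even essentially) nondecreasing — it can even be unbounded near $0$. Hence your fallback ``up to a multiplicative constant'' is not justified in general either: for fixed shift $\delta=\theta-\eta$, the comparison $\omega(u)\lesssim\omega(u+\delta)$ uniformly for $u\in(0,1]$ can fail. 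Your closing diagnosis — that the constant-$1$ statement ``relies crucially on $\omega$ being truly nondecreasing'' — is therefore not correct: the lemma holds, with constant $1$, without any monotonicity of $\omega$.

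The paper's proof compares the two norms at the \emph{same} weight argument and instead shifts the point where $f'$ is evaluated. Writing $\sigma=\theta+\varepsilon$ with $\varepsilon\in(0,1]$, the derivative term of $\|f\|_{\mathrm{Bloch}_{\mu}(\C_{\theta})}$ (for $\eta=0$, say) is $\sup_{0<\varepsilon\leq1}\omega(\varepsilon)\sup_{t}|f'(\theta+\varepsilon+it)|$, and one uses that $\alpha(\sigma)=\sup_{t\in\R}|f'(\sigma+it)|$ is non-increasing in $\sigma$ — a consequence of the maximum modulus principle on half-planes, legitimate since $f\in H^{\infty}(\C_{\varepsilon'})$ for every $\varepsilon'>0$ (Lemma \ref{born}) and $\alpha(\sigma)\to0$ as $\sigma\to\infty$ by Cauchy's inequality — to get $\omega(\varepsilon)\sup_{t}|f'(\theta+\varepsilon+it)|\leq\omega(\varepsilon)\sup_{t}|f'(\varepsilon+it)|$; the $H^{\infty}$ terms compare trivially as in your first paragraph. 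Note that this single maximum-principle observation is essentially the same tool you invoke (via Cauchy and Phragmén--Lindelöf) only in your second case; applied from the outset to horizontal shifts of the evaluation point, it removes the need to compare $\omega$ at two different arguments and yields the lemma in the stated generality. As a minor additional point, your evaluation of the Bloch condition ``at $\sigma=\eta+1$'' should be handled with care, since the norm is an essential supremum in $\sigma$; the paper's formulation avoids this as well.
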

 \begin{proof}
Let $\varepsilon>0$ and $f\in H^{\infty}(\C_{\varepsilon})$. We define
\[
\alpha(\sigma):=\sup_{t\in\R}|f'(\sigma+it)|,\quad\sigma>\varepsilon.
\]
 Then, we claim that $\alpha$ is non-increasing on $(\varepsilon,\infty)$ and $\lim_{\sigma\to+\infty}\alpha(\sigma)=0$. The first statement is a direct consequence of the Maximum Modulus Principle on half-planes (see \cite[Chapter 12, Exercise 9]{rudin}). Regarding the second part of the claim, by the definition of $\alpha$ and Cauchy's inequality, we have that, 
 \[
 \alpha(\sigma+a)\leq \|f'\|_{H^{\infty}(\C_{\varepsilon+a})}\leq \frac{\|f\|_{H^{\infty}(\C_{\varepsilon})}}{a},\quad a>0,
 \]
 and the claim follows.

\
 For simplicity, we shall consider $\eta=0$. The proof can be easily adapted for general $\eta$. Let $f$ belong to $\text{Bloch}_{\mu}(\C_+)$,  
\begin{align*}
\sup_{\substack{\theta<\sigma\leq 1+\theta\\ t\in\R}}
\omega(\sigma-\theta)|f'(\sigma+it)|
&=
\sup_{\substack{0<\varepsilon\leq 1\\ t\in\R}}\omega(\varepsilon)|f'(\theta+\varepsilon+it)|\\
&=\sup_{0<\varepsilon\leq1}\omega(\varepsilon)\left(
\sup_{t\in\R}|f'(\theta+\varepsilon+it)|
\right)\\
&\leq\sup_{0<\varepsilon\leq1}\sup_{t\in\R}\omega(\varepsilon)|f'(\varepsilon+it)|
,
\end{align*}
where in the inequality we have used the claim. The conclusion follows using the above estimate and the fact that  $\|f\|_{H^{\infty}(\C_{\theta+1})}\leq \|f\|_{H^{\infty}(\C_1)}$ for $\theta\geq0$.
\end{proof}
In the next lemma, we are interested in a simple criterion for the membership in the space of Bloch spaces associated to a weight of potential type given in terms of the coefficients of the series. We provide some examples of measures inducing such weights in Example \ref{ejemplomeasure}. This result is inspired by \cite{queffetal}, where a similar result is given for the space $\text{Bloch}(\C_+)$.

\begin{lema}\label{belongcoef}
Let $\mu$ be such that $\omega(\sigma)\approx\sigma^{\delta}$ on $(0,1)$, $\delta>0$. Let $f(s)=\sum_{n\geq1}a_nn^{-s}$ be a Dirichlet series bounded in $\C_1$, with non negative real coefficients. Then, $f\in\emph{Bloch}_{\mu}(\C_+)$ if and only if, for $x\geq\textrm{e}$, 
\begin{equation}\label{condan}
\sum_{x\leq n\leq x^2}a_n=O((\log x)^{\delta-1}).
\end{equation}
\end{lema}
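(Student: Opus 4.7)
The plan is to exploit the sign hypothesis $a_n\geq 0$ to reduce the $\text{Bloch}_\mu(\C_+)$ condition to a real-axis estimate, and then transfer between derivative bounds and coefficient tails by dyadic blocks $I_k:=\{n\in\N:e^{2^k}\leq n<e^{2^{k+1}}\}$, each of the form $[x_k,x_k^2)$ with $x_k=e^{2^k}$. Since $f'(s)=-\sum_{n\geq 2}a_n(\log n)n^{-s}$ and $a_n\geq 0$, we have $|f'(\sigma+it)|\leq -f'(\sigma)$ for every $\sigma>0$ and $t\in\R$; combined with the hypothesis $f\in H^\infty(\C_1)$ and $\omega(\sigma)\approx\sigma^\delta$ on $(0,1)$, membership in $\text{Bloch}_\mu(\C_+)$ becomes equivalent to $\sigma^\delta(-f'(\sigma))=O(1)$ as $\sigma\to 0^+$.

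\textbf{Necessity.} Assume $f\in\text{Bloch}_\mu(\C_+)$. For $x\geq e$, set $\sigma:=1/\log x\in(0,1]$. For $n\in[x,x^2]$ we have $\log n\geq 1/\sigma$ and $n^{-\sigma}\geq x^{-2\sigma}=e^{-2}$, hence
\[
\frac{e^{-2}}{\sigma}\sum_{x\leq n\leq x^2}a_n\;\leq\;\sum_{x\leq n\leq x^2}a_n(\log n)n^{-\sigma}\;\leq\;-f'(\sigma)\;\lesssim\;\sigma^{-\delta},
\]
which gives $\sum_{x\leq n\leq x^2}a_n\lesssim\sigma^{1-\delta}=(\log x)^{\delta-1}$.

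\textbf{Sufficiency.} Assume \eqref{condan}, so $\sum_{n\in I_k}a_n\lesssim 2^{k(\delta-1)}$ for every $k\geq 0$. Using $\log n\leq 2^{k+1}$ and $n^{-\sigma}\leq e^{-\sigma 2^k}$ on $I_k$,
\[
\sigma^\delta\sum_{n\geq 3}a_n(\log n)n^{-\sigma}\;\lesssim\;\sum_{k\geq 0}(\sigma 2^k)^\delta e^{-\sigma 2^k}.
\]
Letting $K$ be the largest integer with $\sigma 2^K\leq 1$: for $k\leq K$, $(\sigma 2^k)^\delta e^{-\sigma 2^k}\leq(\sigma 2^k)^\delta$ and the partial sum is dominated by a geometric series of ratio $2^\delta$; for $k=K+j$ with $j\geq 1$, one has $\sigma 2^k\in(2^{j-1},2^j]$, so $(\sigma 2^k)^\delta e^{-\sigma 2^k}\leq 2^{j\delta}e^{-2^{j-1}}$, and $\sum_{j\geq 1}2^{j\delta}e^{-2^{j-1}}<\infty$. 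The missing terms $n=1,2$ contribute $O(1)$ (with $n=1$ contributing $0$), and together with $f\in H^\infty(\C_1)$ this yields $f\in\text{Bloch}_\mu(\C_+)$.

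The only substantive step is the uniform geometric bound at the end of the sufficiency part; the remainder is bookkeeping. The conceptual point is that positivity of the coefficients collapses the two-variable Bloch test to the real axis, after which \eqref{condan} is exactly the scale-$x\mapsto x^2$ form of the derivative estimate, so the equivalence becomes essentially a tautology up to dyadic summation.
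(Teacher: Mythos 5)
Your proof is correct and follows essentially the same route as the paper's: positivity of the coefficients reduces the Bloch$_{\mu}$ condition to the bound $\sigma^{\delta}\sum_n a_n(\log n)n^{-\sigma}=O(1)$, the necessity is obtained by testing at $\sigma=1/\log x$ on the block $[x,x^{2}]$, and the sufficiency by summing over squaring-scale dyadic blocks. The only (cosmetic) difference is that you use fixed blocks $x_k=e^{2^{k}}$ and then control $\sum_k(\sigma 2^{k})^{\delta}e^{-\sigma 2^{k}}$ uniformly in $\sigma$, whereas the paper chooses the $\sigma$-dependent blocks $x_j=\exp(2^{j}/\sigma)$ so that the same series appears immediately as $\sum_j 2^{j\delta}e^{-2^{j}}$.
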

\begin{proof}
Let $f(s)=\sum_{n\geq1}a_nn^{-s}$ be a Dirichlet series in $\text{Bloch}_{\mu}(\C_+)$. Then,
	\[
	\sup_{\sigma\in(0,1)}\left(
	\sum_{n=1}^{\infty}\sigma^{\delta}\big(\log n\big)a_ne^{-\sigma \log n}
	\right)=c<\infty.
	\]
It is clear that, given $0<\sigma<1$
	\[
	c\geq\sum_{x\leq n\leq x^2}\sigma^{\delta}\big(\log n\big)a_ne^{-\sigma \log n}
	\geq 
	\Big(\sum_{x\leq n\leq x^2}a_n\Big)\sigma^{\delta-1}(\sigma\log x)e^{-2\sigma \log x}\,.
	\]
The choice of $\sigma=1/\log(x)$ gives condition \eqref{condan}.
	
	\ 
	Now, assume that condition \eqref{condan} holds. Consider $x_j=\exp(2^j/\sigma)$. Clearly, $x_{j+1}=x_j^2$, so that
\begin{align*}
	\sum_{n=1}^{\infty}\sigma^{\delta}\big(\log n\big)a_ne^{-\sigma \log n}
	&\leq\sum_{j}\sum_{x_j\leq n \leq x_{j+1}}\sigma^{\delta}\big(\log n\big)a_ne^{-\sigma \log n}
	\\& \leq
	\sigma^{\delta}\sum_{j}\sum_{x_j\leq n \leq x_{j+1}}2a_n\big(\log x_j\big)e^{-\sigma \log x_j}\\
	&\lesssim 
	\sigma^{\delta}\sum_{j}(\log x_j)^{\delta}e^{-\sigma \log x_j}= 
	\sum_{j}2^{j\delta}e^{-2^j},
\end{align*}
where in the second inequality we have applied condition \eqref{condan}. 
Finally,
$$\sum_{n=1}^{\infty}\sigma^{\delta}\big(\log n\big)a_ne^{-\sigma \log n}\lesssim\sum_{j}2^{j\delta}e^{-2^j}<\infty.$$
\vskip-10pt\end{proof}
We can generalise the ideas from the proof of this lemma to the spaces $\text{Bloch}_{\mu}(\C_+)$. We state without proof the following result.
\begin{lema}
Let $\mu$ be such that $\omega(\sigma)=\sigma^{\delta}$, $\delta>0$. Let $f(s)=\sum_{n\geq1}a_nn^{-s}$ be a Dirichlet series bounded in $\C_1$, with non negative coefficients. Then, $f\in\text{Bloch}_{\mu}(\C_+)$ if and only if, for $x\geq2$, 
\begin{equation}\label{condan1}
\sum_{x\leq n\leq x^2}a_n=O\left(\frac{1}{\log(x) \omega(1/\log x)}\right).
\end{equation}
\end{lema}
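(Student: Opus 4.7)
The plan is to mirror the proof of Lemma \ref{belongcoef}, simply replacing $(\log x)^{\delta-1}$ by $1/(\log x\cdot\omega(1/\log x))$ throughout. Since the coefficients are nonnegative, both directions reduce to estimating the expression $\omega(\sigma)\sum_{n\geq 2}a_n(\log n)n^{-\sigma}$ obtained by testing the Bloch-$\mu$ quantity along the real axis $t=0$ (here $|f'(\sigma)|=\sum_n a_n(\log n)n^{-\sigma}$, which dominates $|f'(\sigma+it)|$ at every $t$).

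For the necessity, assume $f\in\text{Bloch}_{\mu}(\C_+)$ with norm $c$. Then for every $\sigma\in(0,1]$,
$$\omega(\sigma)\sum_{n\geq 2}a_n(\log n)n^{-\sigma}\leq c.$$
Restricting the sum to indices in $[x,x^2]$ (with $x\geq\mathrm{e}$) and using $\log n\geq \log x$ together with $n^{-\sigma}\geq x^{-2\sigma}$, I choose $\sigma=1/\log x$. The factor $x^{-2\sigma}$ collapses to $\mathrm{e}^{-2}$ and rearranging yields
$$\sum_{x\leq n\leq x^2}a_n\lesssim\frac{1}{\log x\cdot\omega(1/\log x)},$$
which is exactly \eqref{condan1}.

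For the sufficiency, fix $\sigma\in(0,1]$ and set $x_j=\exp(2^j/\sigma)$, so that $x_{j+1}=x_j^2$, $\log x_j=2^j/\sigma$, and $1/\log x_j=\sigma/2^j$. Choose $j_0$ so that $x_{j_0}\geq 2$, and decompose dyadically; on each block $[x_j,x_{j+1}]$ we have $\log n\leq 2\log x_j$ and $\mathrm{e}^{-\sigma\log n}\leq\mathrm{e}^{-2^j}$. Invoking the hypothesis \eqref{condan1} at $x=x_j$ and multiplying by $\omega(\sigma)$ gives
$$\omega(\sigma)\sum_{n\geq 2}a_n(\log n)\mathrm{e}^{-\sigma\log n}\lesssim\sum_{j\geq j_0}\frac{\omega(\sigma)}{\omega(\sigma/2^j)}\mathrm{e}^{-2^j}.$$
The additional $\|f\|_{H^\infty(\C_1)}$ summand in the $\text{Bloch}_\mu(\C_+)$-norm is controlled directly by the assumption that $f$ is bounded on $\C_1$.

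The main obstacle is bounding the series on the right uniformly in $\sigma\in(0,1]$. For the potential weight $\omega(\sigma)=\sigma^{\delta}$ stated in the lemma the ratio equals $2^{j\delta}$, and $\sum_{j}2^{j\delta}\mathrm{e}^{-2^j}<\infty$, closing the argument and recovering exactly Lemma \ref{belongcoef} under the identification $1/(\log x\cdot\omega(1/\log x))=(\log x)^{\delta-1}$. The only place where the potential form of $\omega$ is actually used is in this final convergence step; for more general weights the same scheme works provided $\omega$ satisfies a mild doubling-type control of the form $\omega(\sigma)\lesssim 2^{j\delta}\omega(\sigma/2^j)$ for some $\delta>0$ and all $j\geq 0$, $\sigma\in(0,1]$.
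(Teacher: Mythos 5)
Your proposal is correct and is essentially the paper's intended argument: the paper states this lemma without proof, remarking only that it generalises the proof of Lemma \ref{belongcoef}, and your proof is exactly that generalisation (test the derivative on the real axis using nonnegativity, choose $\sigma=1/\log x$ for necessity, and use the dyadic blocks $x_j=\exp(2^j/\sigma)$ with the ratio $\omega(\sigma)/\omega(\sigma/2^j)=2^{j\delta}$ for sufficiency). The only loose ends are cosmetic and harmless: the finitely many integers $2\leq n<x_{j_0}$ not covered by the blocks contribute $O(\sigma^{\delta}(a_2+a_3))=O(1)$, and the range $2\leq x<\mathrm{e}$ in \eqref{condan1} is handled trivially via the coefficient bound of Lemma \ref{propbloch}.
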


In analogy with \cite[Lemma 3]{bommier}, we can detail some of the properties of Dirichlet series belonging to the $\text{Bloch}_{\mu}$-spaces. One of the properties we are interested in is that of the inclusion relationship between these new Bloch spaces and the somehow `classical' version of the Bloch space in the right half-plane, namely, $\text{Bloch}(\C_+)$. The importance of this inclusion relies on the fact that we aim to find a sufficiently large space of symbols, so that when $g$ lies in this space, the operator $T_g$ is bounded in $\mathcal{A}_{\mu}^p$. Hence, it is somehow natural that the definition of the space of symbols depends on the measure $\mu$. To be able to settle some interesting inclusion relations, we shall look at the following two conditions.
\begin{defi}\label{miramientos}
    Let $\mu$ be a probability measure absolutely continuous  on $(0,\infty)$ and such that its density $h$ satisfies the conditions from \eqref{condicionh}. We say that:
    \begin{itemize}
        \item[a)] the measure $\mu$ satisfies the \emph{$H_1$-condition} if there exists $\alpha>-1$ such that for every $0<\delta<1$, 
    \begin{equation*}
        h(\delta t)\lesssim \delta^{\alpha}h(t),\quad \text{a.e. } t\in[0,1];
    \end{equation*}
    \item[b)] the measure $\mu$ satisfies the \emph{$H_2$-condition} if there exists $c>0$ such that
    \[
    h(x)\geq ch(\sigma)
    \]
    for a.e. $x\in[\sigma/2,\sigma]$, $0<\sigma\leq1$.
    \end{itemize}  
   \end{defi}
Observe that the measures $\mu_{\alpha}$ from \eqref{macaden1} satisfy these conditions.

\begin{prop}\label{inclusion1}
    Let $\mu$ be an absolutely continuous probability measure  on $(0,\infty)$ such that $0\in\text{supp}(\mu)$. Then,
     \begin{itemize}
         \item[a)] If $\mu$ satisfies the $H_1$-condition, 
         \[
    \emph{Bloch}(\C_+)\cap\mathcal{D}\subset\emph{Bloch}_{\mu}(\C_{+})\cap\mathcal{D}.  
    \]
    In particular, $\mathcal{H}^{\infty}\subsetneq \emph{Bloch}_{\mu}(\C_{+})\cap\mathcal{D} $. 
    \item[b)] If $\mu$ satisfies the $H_2$-condition,
    \[
    \emph{Bloch}_{\mu}(\C_{+})\cap\mathcal{D}\subset\emph{Bloch}(\C_+)\cap\mathcal{D}.
    \]
     \end{itemize}
                \end{prop}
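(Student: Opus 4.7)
The plan is to reduce both inclusions to a pointwise comparison between the weight $\omega(\sigma)=\sqrt{\beta_\mu(\sigma)/h(\sigma)}$ and $\sigma$ on the strip $(0,1]$, using the respective hypotheses on the density $h$.

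For (a), I would first show that the $H_1$-condition implies $\omega(\sigma)\lesssim\sigma$ a.e.\ on $(0,1]$. The change of variables $u=s\sigma$ in $\beta_\mu(\sigma)$ gives
\[
\frac{\omega(\sigma)^2}{\sigma^2}=\frac{1}{h(\sigma)}\int_0^1(1-s)\,h(s\sigma)\,ds.
\]
Applying the $H_1$-condition with $\delta=s$ and $t=\sigma$, a Fubini argument on the unit square shows that for a.e.\ $\sigma\in(0,1]$ the pointwise bound $h(s\sigma)\leq Cs^\alpha h(\sigma)$ holds for a.e.\ $s\in(0,1)$. The integral is then controlled by $\int_0^1(1-s)s^\alpha\,ds<\infty$ since $\alpha>-1$, giving $\omega(\sigma)\lesssim\sigma$ on $(0,1]$. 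Combining this with $\sigma|f'(\sigma+it)|\leq\|f\|_{\text{Bloch}(\C_+)}$ shows the essential supremum part of the $\text{Bloch}_\mu(\C_+)$-norm is finite, while Remark \ref{concejo} yields $f\in H^\infty(\C_1)$, handling the second term. The strict inclusion $\mathcal{H}^\infty\subsetneq\text{Bloch}_\mu(\C_+)\cap\mathcal{D}$ then follows from Remark \ref{sismuertos} combined with the existence of a Bloch Dirichlet series that is unbounded on $\C_+$: for instance, $f(s)=\sum_{k\geq 1}k^{-1}2^{-ks}=-\log(1-2^{-s})$, which satisfies the dyadic growth $\sum_{x\leq n\leq x^2}a_n=O(1)$ of Lemma \ref{belongcoef} (with $\delta=1$) but blows up at $s=0$.

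For (b), the reverse comparison $\omega(\sigma)\gtrsim\sigma$ on $(0,1]$ comes directly from the $H_2$-condition: restricting the defining integral of $\beta_\mu(\sigma)$ to $[\sigma/2,\sigma]$ and using $h(u)\geq ch(\sigma)$ there,
\[
\beta_\mu(\sigma)\geq ch(\sigma)\int_{\sigma/2}^{\sigma}(\sigma-u)\,du=\frac{c\sigma^2}{8}\,h(\sigma),
\]
so $\omega(\sigma)\gtrsim\sigma$ a.e.\ on $(0,1]$. Consequently, for $f\in\text{Bloch}_\mu(\C_+)\cap\mathcal{D}$ and $0<\sigma\leq 1$, one has $\sigma|f'(\sigma+it)|\lesssim\omega(\sigma)|f'(\sigma+it)|\leq\|f\|_{\text{Bloch}_\mu(\C_+)}$.

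To finish (b), it remains to control $\sigma|f'(\sigma+it)|$ for $\sigma>1$. By Lemma \ref{born}, $f\in H^\infty(\C_{1/2})$ with norm controlled by $\|f\|_{\text{Bloch}_\mu(\C_+)}$. Cauchy's inequality on discs of radius $\sigma-1/2$ then gives $|f'(\sigma+it)|\leq\|f\|_{H^\infty(\C_{1/2})}/(\sigma-1/2)$, and since $\sigma/(\sigma-1/2)\leq 2$ for $\sigma\geq 1$, we obtain $\sigma|f'(\sigma+it)|\leq 2\|f\|_{H^\infty(\C_{1/2})}$. The value $|f(1)|$ appearing in the classical Bloch norm is also dominated by $\|f\|_{\text{Bloch}_\mu(\C_+)}$ via Lemma \ref{born}. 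Merging the two regimes establishes $f\in\text{Bloch}(\C_+)\cap\mathcal{D}$. The main technical subtlety is the Fubini step in (a), since the ``a.e.'' quantifier in the $H_1$-condition depends on $\delta$; once this is settled on the product measure, the remainder is a direct weight comparison.
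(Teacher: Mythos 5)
Your proof is correct and follows essentially the same route as the paper: the identical weight comparisons $\omega(\sigma)\lesssim\sigma$ under $H_1$ (via the same rescaled integral) and $\beta_\mu(\sigma)\geq\frac{c}{8}\sigma^2 h(\sigma)$ under $H_2$, with the same example $-\log(1-2^{-s})$ for strictness. The only differences are presentational: you make the a.e.\ quantifier in $H_1$ explicit via Fubini and you write out the $\sigma>1$ Cauchy-estimate step that the paper delegates to Remark \ref{esmuyutil}.
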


\begin{proof}
We begin by proving $a)$. In Remark \ref{sismuertos} it was shown that $\mathcal{H}^{\infty}\subset\text{Bloch}(\C_+)\cap\mathcal{D}$. For the strict inclusion, it suffices to consider a function $f\in\text{Bloch}(\D)\setminus H^{\infty}(\D)$, that is, if $f$ is not  bounded in $\D$ and $\sup_{z\in\D}(1-|z|^2)|f'(z)|<\infty$, and set $g(s)=f(2^{-s})$. The function $g$ is in $\text{Bloch}(\C_+)$ but not in $\mathcal{H}^{\infty}$. 
The function $f(z)=-\log(1-z)$ does the job. 
Now, once the inclusion between the Bloch spaces is settled, the fact that $\mathcal{H}^{\infty}\subsetneq\text{Bloch}_{\mu}(\C_+)$ will follow automatically for $\mu$ a measure satisfying the $H_1$-condition.
    
    \ 
    Let $f\in\text{Bloch}(\C_+)\cap\mathcal{D}$ and let $\mu$ be a measure satisfying the $H_1$-condition. Then,
    \begin{align*}
    \omega(\sigma)^2=\frac{\beta_{\mu}(\sigma)}{h(\sigma)}        &=
    \frac{1}{h(\sigma)}\int_0^{\sigma}(\sigma-t)h(\frac{t}{\sigma}\sigma)dt\\
    &\lesssim \int_0^{\sigma}(\sigma-t)\left(
\frac{t}{\sigma}\right)^{\alpha}dt
\approx
\sigma^2.
    \end{align*}
    Therefore,
    \begin{align*}
        \|f\|_{\text{Bloch}_{\mu}(\C_+)}&=\esssup_{\substack{0<\sigma\leq 1\\ t\in\R}}\omega(\sigma)|f'(\sigma+it)|+\|f\|_{\mathcal{H}^{\infty}(\C_1)}\\&
        \lesssim
        \sup_{\substack{0<\sigma\leq 1\\ t\in\R}}\sigma |f'(\sigma+it)|+\|f\|_{\mathcal{H}^{\infty}(\C_1)}<+\infty.
            \end{align*}
where we have used Remark  
\ref{esmuyutil} and the fact that $f\in\text{Bloch}(\C_+)\cap\mathcal{D}$.

\ 
Regarding $b)$, we find that
\begin{align*}
\beta_{\mu}(\sigma)=\int_{0}^{\sigma}\int_0^th(u)dudt\geq
\int_{\sigma/2}^{\sigma}\int_{\sigma/2}^th(u)dudt&
\geq
c\int_{\sigma/2}^{\sigma}h(\sigma)(t-\sigma/2)dt
\\&\geq
\frac{c}{8}\sigma^2h(\sigma).
\end{align*}
Repeating the argument for part $a)$ we obtain the desired inclusion.
\end{proof}
In the following example we provide an example of an admissible measure $\nu$ defined on $(0,+\infty)$ not satisfying the $H_1$-condition, but such that the inclusion of the $\text{Bloch}(\C_+)$ space in the corresponding $\text{Bloch}_{\nu}(\C_+)$ space is strict (see Example \ref{ejemplofuerte} for a measure satisfying the $H_1$-condition). The next example 
comes to confirm that the forthcoming Theorem \ref{carlesonberg} actually improves the somehow expected sufficient condition $g\in\text{Bloch}(\C_+)$, in order to guarantee the boundedness of $T_g$ on $\mathcal{A}^p_{\mu}$.
\begin{example}\label{ejemplomeasure}
Let $\gamma>1$ and consider the measures $\nu_{\gamma}$ given by $d\nu_{\gamma}(\sigma)=h_{\gamma}(\sigma)d\sigma$, where
	\begin{equation}\label{densitych}
		h_{\gamma}(\sigma)=c_{\gamma}\exp(-\sigma^{-\gamma+1})
		\left(\left(\gamma-1\right)\sigma^{-2\gamma}-\sigma^{-(\gamma+1)}\right),
			\end{equation}
			for $\sigma\in(0,1]$ and $h_{\gamma}(\sigma)=0$ if $\sigma>1$ and $c_{\gamma}$ a suitable normalisation constant so that $h_{\gamma}$ is indeed the density of a probability measure. Then, we have that
   \[
   \emph{Bloch}(\C_+)\cap\mathcal{D}\subsetneq \emph{Bloch}_{\nu_{\gamma}}(\C_+)\cap\mathcal{D}
   \]
\end{example}
The idea to prove this strict inclusion is to consider the second derivative of $\sigma\mapsto\exp(-\sigma^{-\gamma+1})$.
We do not need to know precisely $h_\gamma$ on the remaining part of the right half-line: we only require that $h_\gamma>0$, is continuous and belongs to $L^1(0,+\infty)$. 

Recalling the definition of $\beta_{\mu}$, we see  that, on a neighbourhood of $0$
	\[
	\beta_{\gamma}(\sigma)=\beta_{\nu_{\gamma}}(\sigma)\approx d_{\gamma}\exp(-\sigma^{1-\gamma}).
	\]
	Therefore, by continuity, we have that $\omega(\sigma)\approx \sigma^{\gamma}$ on $(0,1)$. 
	It is plain that for $0<\sigma\leq1$ and $t\in\R$,
	\[
	\sigma^{\gamma}|f'(\sigma+it)|\leq \sigma|f'(\sigma+it)|.
	\]
	Taking the supremum on $t\in\R$ and $\sigma\in(0,1]$ the inclusion follows.	
	To prove the strict inclusion, 	consider the test functions
	\[
	f_{\gamma}(s)=-\sum_{n=2}^{\infty}(\log n)^{\gamma-2}n^{-(s+1)}.
	\]
	Since the $\text{Bloch}_{\mu}$-norm is invariant under vertical translates, we assume $t=0$ and study the behaviour of $f'_{\gamma}$ close to zero. Then,
	\[
	f'_{\gamma}(\sigma)=\sum_{n=2}^{\infty}(\log n)^{\gamma-1}n^{-(\sigma+1)}
 =\zeta^{(\gamma-1)}(\sigma+1)
\approx\frac{1}{\sigma^{\gamma}},
	\]
 where $\zeta^{(\gamma)}$ stands for the $\gamma$-fractional derivative of the Riemann zeta function (see \cite[Lemma 3.1]{olsen2008local}). Consequently, 
	\[
	\sup_{\substack{0<\sigma\leq1\\ t\in\R}}\sigma^{\gamma}|f_{\gamma}'(\sigma+it)|
	\approx
	1.
	\]
 The function $f_{\gamma}$ being bounded on $\C_1$, we have that $f_{\gamma}\in\text{Bloch}_{\nu_{\gamma}}(\C_+)$. However, since $\gamma>1$,
	\[
	\|f_{\gamma}\|_{\text{Bloch}(\C_+)}\geq\sup_{\substack{0<\sigma\leq1\\ t\in\R}}\sigma|f_{\gamma}'(\sigma+it)|\approx\sup_{0<\sigma\leq1}\frac{1}{\sigma^{\gamma-1}}=\infty.
	\]

\begin{remark}
	Observe that we have actually shown that 
	\[
	\text{Bloch}(\C_+)\cap\mathcal{D}\subsetneq\text{Bloch}_{\nu_{\gamma}}(\C+)\cap\mathcal{D}\subsetneq\text{Bloch}_{{\nu_{\beta}}}(\C_+)\cap\mathcal{D},\quad \beta>\gamma>1.
	\]
	\end{remark}
 Next lemma contains some other properties of the $\text{Bloch}_{\mu}$ spaces of Dirichlet series related to the size of the coefficients and the membership of vertical limits.
\begin{lema}\label{propbloch}
    Let $f(s)=\sum_{n\geq1}a_nn^{-s}$ be a Dirichlet series in $\emph{Bloch}_{\mu}(\C_+)$. Then:
    \begin{itemize}
    \item[a)] For every $\chi\in\T^{\infty}$, $f_{\chi}\in\emph{Bloch}_{\mu}(\C_+)$ and $\|f_{\chi}\|_{\emph{Bloch}_{\mu}(\C_+)}= \|f\|_{\emph{Bloch}_{\mu}(\C_+)}$.
        \item[b)] For every $n\geq3$,
        \[|a_n|\leq {\rm e}\frac{\|f\|_{\emph{Bloch}_{\mu}(\C_+)}}{\log(n)\,\omega(1/\log n)}
        \cdot
        \]
         \item[c)] In particular, $|a_n|=o(n^\epsilon)$ for every $\epsilon>0$.
        
    \end{itemize}
\end{lema}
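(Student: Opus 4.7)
The plan is to treat each of the three items in turn, relying on standard Dirichlet series machinery (vertical averaging, Kronecker's theorem) together with the bounds already proved in the section.

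For item (a), I would first observe that vertical translates preserve the norm: since $\omega(\sigma)$ depends only on $\sigma=\mathrm{Re}(s)$ and both the strip $\mathbb{S}$ and the $H^\infty(\C_1)$ norm are invariant under $s\mapsto s+i\tau$, the identity $\|f_{i\tau}\|_{\text{Bloch}_\mu(\C_+)}=\|f\|_{\text{Bloch}_\mu(\C_+)}$ is immediate from the definition. To pass from vertical translates to an arbitrary character, I would invoke Kronecker's theorem to produce a sequence $\{\tau_k\}\subset\R$ such that $f_{i\tau_k}\to f_\chi$ normally on $\C_+$, whence also $f'_{i\tau_k}\to f'_\chi$ normally. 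Lower semicontinuity of the essential supremum and of the $H^\infty(\C_1)$ norm under uniform convergence on compact subsets of $\C_+$ then yields $\|f_\chi\|_{\text{Bloch}_\mu(\C_+)}\leq\|f\|_{\text{Bloch}_\mu(\C_+)}$; the reverse inequality follows by applying the same argument to $f_\chi$ and the character $\chi^{-1}$, since $(f_\chi)_{\chi^{-1}}=f$.

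For item (b), I would use the Bohr-type recovery formula for Dirichlet coefficients applied to $f'(s)=-\sum_n a_n(\log n)\,n^{-s}$, namely
\[
-a_n\log n=\lim_{T\to\infty}\frac{1}{2T}\int_{-T}^T f'(\sigma+it)\,n^{\sigma+it}\,dt,\qquad \sigma>0.
\]
For $0<\sigma\leq 1$ this gives
\[
|a_n|\log n\leq n^\sigma\esssup_{t\in\R}|f'(\sigma+it)|\leq \frac{n^\sigma}{\omega(\sigma)}\,\|f\|_{\text{Bloch}_\mu(\C_+)}.
\]
Optimising by taking $\sigma=1/\log n$, which lies in $(0,1]$ precisely for $\log n\geq 1$ and hence for every $n\geq 3$, produces $n^\sigma={\rm e}$ and the claimed bound.

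For item (c), I would invoke Lemma~\ref{born}, which guarantees $\sigma_b(f)\leq 0$; hence $f$ is bounded on every $\C_\varepsilon$, $\varepsilon>0$, and the analogous vertical-averaging formula applied to $f$ itself yields $|a_n|\leq n^\varepsilon\|f\|_{H^\infty(\C_\varepsilon)}$. Given an arbitrary $\epsilon>0$, applying this with $\varepsilon=\epsilon/2$ produces $|a_n|n^{-\epsilon}\leq n^{-\epsilon/2}\|f\|_{H^\infty(\C_{\epsilon/2})}\to 0$, as required. The only delicate point in the whole proof is the passage to the limit in part (a), where the essential supremum (rather than a genuine supremum) must be handled; lower semicontinuity along normally convergent sequences disposes of this cleanly, and parts (b) and (c) are then essentially immediate.
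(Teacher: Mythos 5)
Your proposal is correct and follows essentially the same route as the paper: part (a) via Kronecker's theorem, a normal-family/locally uniform convergence argument (the uniform bound coming from Lemma~\ref{born} and the vertical-translation invariance of the norm), and passing the norm to the limit; part (b) via the coefficient bound for bounded Dirichlet series applied to $f'$ (you rederive it by vertical averaging where the paper cites \cite[Theorem 6.1.1]{queffelecs}) with the choice $\sigma=1/\log n$. The only cosmetic difference is in (c), where you apply the coefficient bound to $f$ itself instead of reading $o(n^{\epsilon})$ off the inequality in (b); both are immediate.
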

\begin{proof}
We begin by showing a). Let $\chi$ be a character. Since $f=(f_{\chi})_{\chi^{-1}}$, it is enough to see that 
\begin{equation}\label{loquehay}
    \|f_{\chi}\|_{\text{Bloch}_{\mu}(\C_+)}\leq \|f\|_{\text{Bloch}_{\mu}(\C_+)}.
\end{equation}
 By Kronecker's theorem, there exists a sequence of real numbers $\{\tau_k\}_k$ such that $n^{-i\tau_k}\to\chi(n)$ for all $n\in\N$. Let $f_k(s)=f(s+i\tau_k)$. Using Lemma \ref{born}, for $\varepsilon>0$, we clearly have $$\sup_{k\in\N}\|f_k\|_{\mathcal{H}^{\infty}(\C_{\varepsilon})}<\infty.$$
Therefore, there exists a subsequence, relabelled as $\{f_k\}$, such that $f_k(s)\to f_{\chi}(s)$ locally uniformly on $\C_{\varepsilon}$, for all $\varepsilon>0$. In particular, this implies that $f_k(s)\to f_{\chi}(s)$ and $f_k'(s)\to f_{\chi}'(s)$ for all $s\in\C_+$, as $k\to\infty$. Hence,
\begin{align*}
    \|f_{\chi}\|_{\mathcal{H}^{\infty}(\C_1)}
    \leq
    \sup_{k\in\N}\|f_k\|_{\mathcal{H}^{\infty}(\C_1)}
    =\|f\|_{\mathcal{H}^{\infty}(\C_1)}<\infty
\end{align*}
and, for $\omega\in(0,1]$ fixed,
\begin{align*}
\sup_{t\in\R}\omega(\sigma)|f'_{\chi}(\sigma+it)|
\leq
\sup_{\substack{k\in\N\\ t\in\R}}\omega(\sigma)|f'_{k}(\sigma+it)|
&=
\sup_{\substack{k\in\N\\ t\in\R}}\omega(\sigma)|f'(\sigma+it)|\\
&\leq
\|f\|_{\text{Bloch}_{\mu}(\C_+)}-\|f\|_{\mathcal{H}^{\infty}(\C_1)},
\end{align*}
giving \eqref{loquehay}, as desired.

\ 
For statement $b)$ and $c)$, we use the fact that $f'\in\mathcal{H}^{\infty}(\C_{\varepsilon})$, for $\varepsilon>0$, and apply \cite[Theorem 6.1.1]{queffelecs} so that
\[
|a_nn^{-\varepsilon}\log n|\leq\|T_{\varepsilon}f'\|_{\infty}
\leq
\sup_{t\in\R}|f'(\varepsilon+it)|
\leq 
\frac{\|f\|_{\text{Bloch}_{\mu}(\C_+)}}{\omega(\varepsilon)}\cdot
\]
This is,
\[
|a_n|\leq \frac{\|f\|_{\text{Bloch}_{\mu}(\C_+)}n^{\varepsilon}}{\log(n) \omega(\varepsilon)}\cdot
\]
This implies $c)$ and taking $\varepsilon=\frac{1}{\log n},\  n^{-\varepsilon}={\rm e}^{-1}$, we get $b)$.
\end{proof}
\begin{remark}
	If we consider the density measure $d\mu(\sigma)=h_{\gamma}(\sigma)d\sigma$, $\gamma>1$, where $h_{\gamma}$ is as in \eqref{densitych} from Example \ref{ejemplomeasure}. It turns out that $\omega(\sigma)\approx \sigma^{\gamma}$. Let $f(s)=\sum_{n\geq1}a_nn^{-s}$ belong to $\text{Bloch}_{\mu}(\C_+)$ with norm $1$. In this case, using Lemma \ref{propbloch}, we have that
	$$|a_n|\lesssim (\log n)^{\gamma-1},\quad n\geq2.$$
 In this case, we have a better estimate than $a_n=O(n^{\varepsilon})$, $\epsilon>0$ from Lemma \ref{propbloch}.
\end{remark}

In analogy with the Bloch space of the unit disc, we can also introduce the little oh Bloch$_{\mu}$ version.
\begin{defi}
Let $\theta\in\R$ and consider $f$ an holomorphic function in $\C_{\theta}$. 
We say that $f$ belongs to the little Bloch space $\emph{Bloch}_{\mu,0}(\C_{\theta})$ if
\begin{equation*}
    \lim_{\sigma\to\theta^+}\omega(\sigma-\theta)\sup_{t\in\R}|f'(\sigma+it)|=0.
\end{equation*}
\end{defi}
As we did in the case of the space $\text{Bloch}_{\mu}(\C_+)$, we shall establish some basic properties of this little Bloch space.
\begin{lema}\label{convfu}
    Let $\{f_n\}_n$ be a sequence of functions in $\emph{Bloch}_{\mu,0}(\C_+)$ and consider $f\in\emph{Bloch}_{\mu}(\C_+)$. In addition, assume that
    \begin{itemize}
        \item[i)] $f_n\to f$ uniformly in half-planes $\C_{\varepsilon}$, $\varepsilon>0$.
        \item[ii)] For every $\varepsilon>0$, there exists $\rho=\rho(\varepsilon)$ such that if $\sigma<\rho$, then
        \[
        \omega(\sigma)|f'_n(\sigma+it)|<\varepsilon,
        \]
        for all $t\in\R$ and $n\in\N$.
    \end{itemize}
    Then,
    \[
    f\in\hbox{\rm Bloch}_{\mu,0}(\C_+) \quad\hbox{and}\quad\lim_{n\to\infty}\|f-f_n\|_{\emph{Bloch}_{\mu}(\C_+)}=0.
    \]
\end{lema}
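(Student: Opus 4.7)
The plan is to split the strip $\{0<\sigma\leq 1\}$ into a boundary region $\{0<\sigma<\rho\}$, where hypothesis $ii)$ delivers smallness uniformly in $n$, and the region $\{\rho\leq\sigma\leq 1\}$, where uniform convergence of the $f_n$'s (hence of their derivatives) does the job. The preliminary step is that, for each fixed $\sigma>0$, $f_n'(\sigma+it)\to f'(\sigma+it)$ uniformly in $t\in\R$: this follows from Cauchy's formula applied on the circle $|w-s|=\sigma/4$ centred at $s=\sigma+it$, which gives
\[
|f_n'(s)-f'(s)|\leq \frac{4}{\sigma}\,\|f_n-f\|_{H^\infty(\C_{\sigma/2})},
\]
combined with hypothesis $i)$.

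For the membership $f\in\text{Bloch}_{\mu,0}(\C_+)$, I fix $\varepsilon>0$, pick $\rho=\rho(\varepsilon)$ from $ii)$ and pass to the limit in $n$ in the pointwise inequality $\omega(\sigma)|f_n'(\sigma+it)|<\varepsilon$ for $\sigma\in(0,\rho)$ and $t\in\R$. Using the preliminary step, this yields $\omega(\sigma)|f'(\sigma+it)|\leq \varepsilon$ throughout $(0,\rho)\times\R$, and taking the supremum in $t$ gives $\omega(\sigma)\sup_{t\in\R}|f'(\sigma+it)|\leq\varepsilon$ on $(0,\rho)$. Arbitrariness of $\varepsilon$ yields the desired limit as $\sigma\to 0^+$.

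For the norm convergence, the term $\|f-f_n\|_{H^\infty(\C_1)}$ tends to $0$ directly by $i)$. For the weighted esssup, given $\varepsilon>0$ I take $\rho=\rho(\varepsilon)$ from $ii)$ and combine $ii)$ with the Bloch$_{\mu,0}$-bound just obtained for $f$, via the triangle inequality, to conclude
\[
\omega(\sigma)|f'(\sigma+it)-f_n'(\sigma+it)|\leq 2\varepsilon
\]
for all $\sigma\in(0,\rho)$, $t\in\R$ and $n\in\N$. On the remaining slab $\rho\leq\sigma\leq 1$, the boundedness $\omega\in L^\infty([\rho,1])$ from \eqref{condicionomega}, together with the uniform estimate $|f_n'(s)-f'(s)|\leq (4/\rho)\|f_n-f\|_{H^\infty(\C_{\rho/2})}$ for $\Re(s)\geq\rho$ and hypothesis $i)$, imply that the corresponding weighted supremum vanishes as $n\to\infty$. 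Combining the two regimes gives $\limsup_n\|f-f_n\|_{\text{Bloch}_\mu(\C_+)}\leq 2\varepsilon$, and letting $\varepsilon\to 0$ finishes the argument.

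I do not anticipate any serious obstacle: the proof is a standard boundary-versus-interior splitting, with Cauchy's formula as the only analytic input. The mild technicality is that $\omega$ is only measurable, so the norm involves an essential supremum; however, since all the estimates above are pointwise and uniform in $t$, they dominate the esssup without further argument.
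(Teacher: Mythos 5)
Your proof is correct and follows essentially the same route as the paper: Cauchy's formula plus hypothesis $i)$ to get uniform convergence of the derivatives, passage to the limit in $ii)$ to obtain both the membership $f\in\text{Bloch}_{\mu,0}(\C_+)$ and the smallness of $\omega(\sigma)|f'-f_n'|$ near $\sigma=0$, and then the boundedness of $\omega$ on $[\rho,1]$ together with uniform convergence on $\overline{\C_\rho}$ for the remaining slab, with the $H^\infty(\C_1)$ term handled directly by $i)$. No gaps; the only cosmetic difference is that you quote \eqref{condicionomega} where the paper quotes \eqref{condicionh}, which are equivalent by the remark following Definition \ref{mubloch}.
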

\begin{proof}
Using Cauchy's formula and condition $i)$, we deduce that $f'_n\to f'$ uniformly in half-planes $\C_{\varepsilon}$ and, in particular, it converges pointwise in $\C_{\varepsilon}$. 
This, together with condition $ii)$ implies that $f\in\text{Bloch}_{\mu,0}(\C_+) $. 

On the other hand, using condition $ii)$, there exists $\rho>0$ such that for $\sigma<\rho$,
\[
\omega(\sigma)|f'(\sigma+it)-f'_n(\sigma+it)|<2\varepsilon.
\]
Now, for $\rho\leq\sigma\leq1$, we use that $f'_n\to f'$ uniformly in $\overline{\C_{\rho}}$. 
By \eqref{condicionh}, we have that $\omega$ is bounded in $[\rho,1]$, so there exists $n_0=n_0(\varepsilon)$ such that for all $n\geq n_0$, $\sigma\in[\rho,1] $ and $t\in\R$
\[
\omega(\sigma)|f'(\sigma+it)-f'_n(\sigma+it)|<\varepsilon
\]
and the claim follows. Notice also that $\|f-f_n\|_{\mathcal{H}^{\infty}(\C_1)}\to0$ as $n\to\infty$, the conclusion follows.
\end{proof}
The following result is an immediate consequence of the preceding lemma.
\begin{lema}\label{horiz1}
    Let $f\in\emph{Bloch}_{\mu,0}(\C_+)\cap\mathcal{D}$. Then,
    \[
    \lim_{\delta\to0^+}\|f-f_{\delta}\|_{\emph{Bloch}_{\mu}(\C_+)}=0.
    \]
\end{lema}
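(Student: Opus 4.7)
The plan is to read this as a direct application of Lemma \ref{convfu} to the family $\{f_{\delta_n}\}$ with $\delta_n \downarrow 0^+$ arbitrary. The work consists in verifying hypotheses (i) and (ii) of that lemma, together with the preliminary observation that each $f_\delta$ itself lies in $\text{Bloch}_{\mu,0}(\C_+)$.

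To verify (i), I would first invoke Lemma \ref{born} to obtain $\sigma_b(f)\leq 0$, so that by Remark \ref{concejo} $f \in H^\infty(\C_\varepsilon)$ for every $\varepsilon > 0$. A standard Cauchy estimate then makes $f'$ bounded on $\C_{2\varepsilon}$, so $f$ is Lipschitz on $\C_{2\varepsilon}$, and therefore $f_\delta\to f$ uniformly on $\C_{2\varepsilon}$ as $\delta\to 0^+$. This gives the uniform convergence on every half-plane $\C_\varepsilon$ required in (i).

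For (ii), I would use exactly the auxiliary function introduced at the beginning of the proof of Lemma \ref{inclusion}: set
\[
\alpha(\sigma):=\sup_{t\in\R}|f'(\sigma+it)|,\qquad \sigma>0,
\]
which is non-increasing on $(0,\infty)$ by the Maximum Modulus Principle on half-planes. Then, for every $\delta\geq 0$, every $\sigma\in(0,1]$, and every $t\in\R$,
\[
\omega(\sigma)|f_\delta'(\sigma+it)|=\omega(\sigma)|f'(\sigma+\delta+it)|\leq \omega(\sigma)\alpha(\sigma+\delta)\leq \omega(\sigma)\alpha(\sigma).
\]
Since $f\in\text{Bloch}_{\mu,0}(\C_+)$, we have $\omega(\sigma)\alpha(\sigma)\to 0$ as $\sigma\to 0^+$. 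Given $\varepsilon>0$, any $\rho=\rho(\varepsilon)>0$ such that $\omega(\sigma)\alpha(\sigma)<\varepsilon$ on $(0,\rho)$ therefore fulfills $\omega(\sigma)|f_{\delta_n}'(\sigma+it)|<\varepsilon$ for every $n\in\N$ and $t\in\R$, which is precisely condition (ii). The same chain of inequalities, applied with $\delta$ fixed, shows simultaneously that $f_\delta\in\text{Bloch}_{\mu,0}(\C_+)$, so the family $\{f_{\delta_n}\}_n$ legitimately belongs to the hypothesis of Lemma \ref{convfu}.

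Applying Lemma \ref{convfu} yields $\|f-f_{\delta_n}\|_{\text{Bloch}_\mu(\C_+)}\to 0$ for the arbitrary sequence $\delta_n\downarrow 0^+$, which is exactly $\lim_{\delta\to 0^+}\|f-f_\delta\|_{\text{Bloch}_\mu(\C_+)}=0$. The only delicate point is that the smallness in (ii) must be uniform in $n$; the monotonicity of $\alpha$ combined with $\omega(\sigma)\alpha(\sigma)\to 0$ (the very definition of the little-oh Bloch space) is what secures this uniformity without any additional control on how $\omega$ behaves near $0$.
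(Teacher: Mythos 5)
Your proposal is correct and follows essentially the same route as the paper: apply Lemma \ref{convfu} to $f_{\delta_n}$ with $\delta_n\downarrow0$, verify condition (ii) via the maximum-principle monotonicity of $\sigma\mapsto\sup_{t}|f'(\sigma+it)|$ together with the little-oh hypothesis, and deduce condition (i) from $\sigma_b(f)\leq0$. The only difference is that you spell out the uniform convergence in (i) via the Cauchy/Lipschitz estimate (note that the boundedness of $f$ on each $\C_\varepsilon$ comes directly from Lemma \ref{born} rather than Remark \ref{concejo}), which the paper leaves implicit.
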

\begin{proof}
    Consider $\{\delta_n\}$ a sequence of positive real numbers $\delta_n\downarrow 0$. Then, the sequence $f_n(s)=f(s+\delta_n)$ clearly satisfies condition ii) from Lemma \ref{convfu}. By the maximum principle, since the $\delta_n$'s are positive,
    \[
\sup_{t\in\R}\omega(\sigma)|f'_n(\sigma+it)|=
\omega(\sigma)\sup_{t\in\R}|f'(\sigma+\delta_n+it)|\leq\omega(\sigma)\sup_{t\in\R}|f'(\sigma+it)|.
    \]
    Since $f\in\text{Bloch}_{\mu,0}$, we can find $\rho>0$
 small enough so that the supremum on $(0,\rho)$ is controlled by $\varepsilon$ uniformly in $n$. Condition $i)$ follows from the fact that $\sigma_b(f_n)\leq \sigma_b(f)\leq 0$ for all $n$. 
 An application of Lemma \ref{convfu} gives the result.
\end{proof}
\begin{remark}
    The statement of Lemma \ref{horiz1} is, in general, no longer true for functions in the larger space $\text{Bloch}_{\mu}(\C_+)$. 
\end{remark}
\begin{prop}\label{densite}
Let $\mu$ be a measure satisfying the $H_1$-condition, $g$ be a Dirichlet series belonging to the space $\emph{Bloch}_{\mu,0}(\C_+)$ and $\varepsilon>0$. Then, there exists a Dirichlet polynomial $P$ such that
$\|f-P\|_{\emph{Bloch}_{\mu}(\C_+)}<\varepsilon$.
\end{prop}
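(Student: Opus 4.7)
The plan is to approximate $f$ first by its horizontal translate $f_{\delta}$ for some small $\delta>0$, and then approximate the translate by a genuine Dirichlet polynomial, namely a partial sum of its Dirichlet series. The $H_1$-condition will come in to make the weighted derivative part of the $\text{Bloch}_{\mu}$ norm tractable via Cauchy's inequality.

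First, by Lemma \ref{horiz1}, applied to $f\in\text{Bloch}_{\mu,0}(\C_+)\cap\mathcal{D}$, there exists $\delta>0$ such that
\[
\|f-f_{\delta}\|_{\text{Bloch}_{\mu}(\C_+)}<\varepsilon/2.
\]
Since $f\in\mathcal{D}$ lies in $\text{Bloch}_{\mu}(\C_+)$, Lemma \ref{born} gives $\sigma_b(f)\leq 0$, and Bohr's theorem ($\sigma_u=\sigma_b$) then yields $\sigma_u(f)\leq 0$. Consequently, if $f(s)=\sum_{n\geq 1}a_n n^{-s}$, the partial sums $S_N(s)=\sum_{n=1}^N a_n n^{-s}$ converge uniformly to $f$ on every half-plane $\C_{\eta}$, $\eta>0$. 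Setting $P_N(s):=\sum_{n=1}^N a_n n^{-\delta} n^{-s}=S_N(s+\delta)$, we obtain uniform convergence $P_N\to f_{\delta}$ on $\C_{-\delta/2}$, and in particular
\[
\|f_\delta - P_N\|_{H^\infty(\C_0)}\longrightarrow 0 \quad \text{as } N\to\infty.
\]

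Write $g_N:=f_{\delta}-P_N$. The previous display immediately handles the second term of the $\text{Bloch}_{\mu}$ norm, since $\|g_N\|_{H^{\infty}(\C_1)}\leq\|g_N\|_{H^{\infty}(\C_0)}\to 0$. For the weighted-derivative term, I apply Cauchy's inequality on the disk of radius $\sigma$ centred at $\sigma+it\in\C_+$ (which is contained in $\C_0$): for every $\sigma>0$ and $t\in\R$,
\[
\sigma\,|g_N'(\sigma+it)|\leq \|g_N\|_{H^{\infty}(\C_0)}.
\]
Now I invoke the $H_1$-condition: as already observed in the proof of Proposition \ref{inclusion1} a), it implies $\omega(\sigma)\lesssim\sigma$ on $(0,1]$. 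Combining the two bounds yields
\[
\esssup_{\substack{0<\sigma\leq 1\\ t\in\R}}\omega(\sigma)|g_N'(\sigma+it)|\lesssim \|g_N\|_{H^{\infty}(\C_0)}\longrightarrow 0.
\]
Hence $\|f_{\delta}-P_N\|_{\text{Bloch}_{\mu}(\C_+)}\to 0$ as $N\to\infty$, so we may fix $N$ large enough that $\|f_{\delta}-P_N\|_{\text{Bloch}_{\mu}(\C_+)}<\varepsilon/2$. A triangle inequality then gives $\|f-P_N\|_{\text{Bloch}_{\mu}(\C_+)}<\varepsilon$, and $P_N$ is the required Dirichlet polynomial.

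The main obstacle I would expect to be the weighted-derivative part of the norm, since on its own $P_N$ need not be close to $f_\delta$ in any uniform sense near the imaginary axis $\text{Re}(s)=0$. The point that unlocks everything is that after translation by $\delta$ the remainder $g_N$ is uniformly bounded on the slightly larger half-plane $\C_0$ (using $\sigma_u(f)\leq 0$), so Cauchy's inequality converts a bound on $\|g_N\|_{H^\infty(\C_0)}$ into the estimate $\sigma|g_N'|\lesssim\|g_N\|_{H^\infty(\C_0)}$. Once this bound is available, the $H_1$-condition, through $\omega(\sigma)\lesssim\sigma$, lets us absorb the weight and close the argument.
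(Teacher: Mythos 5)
Your proof is correct and follows essentially the same route as the paper: translate by $\delta$ via Lemma \ref{horiz1}, approximate $f_\delta$ by the translated partial sums using $\sigma_u(f)\leq 0$, and control the $\text{Bloch}_{\mu}$ norm of the remainder by its sup norm thanks to the $H_1$-condition. The only cosmetic difference is that you inline the estimate (Cauchy's inequality together with $\omega(\sigma)\lesssim\sigma$) where the paper invokes Proposition \ref{inclusion1} and the inclusion $\mathcal{H}^{\infty}\subset\text{Bloch}(\C_+)$, which amounts to the same argument.
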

\begin{proof}
Let $g$ be a Dirichlet series in $\text{Bloch}_{\mu,0}(\C_+)$ and $\varepsilon>0$. 
We recall that $S_Ng$ denotes the $N$-th partial sum of $g$. 
By Lemma \ref{horiz1}, there exists $\delta>0$ such that $\|g-g_{\delta}\|_{\text{Bloch}_{\mu}(\C_+)}<\varepsilon/2$. 
Moreover, since $\mu$ satisfies the $H_1$-condition, we can apply Proposition \ref{inclusion1} so that
\begin{align*}
    \|g-(S_Ng)_{\delta}\|_{\text{Bloch}_{\mu}(\C_+)}
    &\leq
    \|g-g_{\delta}\|_{\text{Bloch}_{\mu}(\C_+)}
    +
    \|g_{\delta}-(S_Ng)_{\delta}\|_{\text{Bloch}_{\mu}(\C_+)}\\
    &\leq \frac{\varepsilon}{2}+C\|g_{\delta}-(S_Ng)_{\delta}\|_{\mathcal{H}^{\infty}}.
\end{align*}
Now, by Lemma \ref{born}, the partial sums of $g$, $S_Ng$, converge to $g$ uniformly in $\overline{\C}_{\delta}$. 
Then, we can find $N=N(g,\delta)$ large enough so that $\|g_{\delta}-(S_Ng)_{\delta}\|_{\mathcal{H}^{\infty}}\leq \varepsilon/2C $ and the conclusion follows.
\end{proof}

\section{Volterra operator on \texorpdfstring{$\text{Bloch}(\C_+)\cap\mathcal{D}$}{}}\label{sec:volterrainbloch}
In this section, we will first characterise the bounded Volterra operators acting on the Bloch space $\text{Bloch}(\C_+)\cap\mathcal{D}$. Then, the description of the compact Volterra operators $T_g$ is settled. 

\ 
We recall that the pointwise evaluation functionals $\delta_s$ are defined, for $s\in\C_+$, as $\delta_sf=f(s)$, where $f\in\text{Bloch}(\C_+)$.
The following estimate of evaluations $\delta_s$, interesting by itself, will be needed in the proof of Theorem \ref{boundednessbloch}.
\begin{prop}\label{functional}
	For $s=\sigma+it, \sigma>0$, let $\delta_s:\emph{Bloch}(\C_+)\cap\mathcal{D}\to\C$ be given by $\delta_s(f)=f(s)$. Then,    
	\begin{equation*}
      \log\left(\frac{1}{\sigma}\right)  \lesssim\|\delta_s\|_{(\emph{Bloch}(\C_+))^*}\lesssim \left(1+  \log\left(\frac{1}{\sigma}\right)\right).
    \end{equation*}
\end{prop}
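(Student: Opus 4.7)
The plan is to establish the two bounds separately. The lower bound will come from an explicit family of test Dirichlet series, while the upper bound will proceed by horizontal integration together with a uniform boundary estimate that essentially uses the Dirichlet series structure.

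For the lower bound I would test against the family of Dirichlet series
\[
f_t(w)=-\log\!\big(1-2^{-w+it}\big)=\sum_{k\ge 1}\frac{2^{ikt}}{k}\,2^{-kw},\qquad t\in\R,
\]
which are supported on the powers of $2$ and therefore lie in $\mathcal{D}$. A direct computation gives $f_t(\sigma+it)=-\log(1-2^{-\sigma})\approx \log(1/\sigma)$ as $\sigma\to 0^+$. The sup part of the Bloch norm of $f_t$ equals, by a vertical change of variable, that of $f_0$, which is finite thanks to the elementary estimate $\sigma\log 2 \cdot 2^{-\sigma}/|1-2^{-\sigma}|=O(1)$ near $\sigma=0$ and the decay at infinity. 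Moreover $|f_t(1)|=|\log(1-2^{-1}e^{it\log 2})|$ is uniformly bounded in $t$ since $|1-2^{-1}e^{it\log 2}|\ge 1/2$. Together these give $\|f_t\|_{\text{Bloch}(\C_+)}\lesssim 1$ uniformly in $t$, and hence
\[
\|\delta_s\|_{(\text{Bloch}(\C_+))^*}\ge \frac{|f_t(s)|}{\|f_t\|_{\text{Bloch}(\C_+)}}\gtrsim \log(1/\sigma).
\]

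For the upper bound I fix $f$ in the unit ball of $\text{Bloch}(\C_+)\cap\mathcal{D}$ and write $[f]=\sup_{\sigma+it\in\C_+}\sigma|f'(\sigma+it)|\le 1$. Horizontal integration from $1+it$ to $\sigma+it$ for $\sigma\in(0,1]$ yields
\[
|f(\sigma+it)|\le |f(1+it)|+\int_\sigma^1\frac{[f]}{x}\,dx\le |f(1+it)|+\log(1/\sigma),
\]
while for $\sigma\ge 1$ we have $f\in H^\infty(\C_\varepsilon)$ for every $\varepsilon>0$ by Remark \ref{concejo}, so $|f(\sigma+it)|$ is controlled there. The whole matter thus reduces to proving the uniform bound $\sup_{t\in\R}|f(1+it)|\lesssim \|f\|_{\text{Bloch}(\C_+)}$.

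This uniform control of the boundary values on the line $\text{Re}(s)=1$ is the main obstacle of the proof, since naive vertical integration from $f(1)$ only produces an $O(|t|\,\|f\|_{\text{Bloch}})$ bound and the conformal map to the disc gives only a $\log|t|$-dependent estimate: both are inadequate. The way out is to exploit the Dirichlet series structure, using the coefficient bound $|a_n|\lesssim \|f\|_{\text{Bloch}(\C_+)}$ that follows from the argument of Lemma \ref{propbloch} applied with $\omega(\sigma)=\sigma$, combined either with a Dirichlet polynomial approximation in the spirit of Proposition \ref{densite} or with a character-shift argument based on Kronecker's theorem, as in Lemma \ref{propbloch}(a), so that $|f(1+i\tau_k)|$ converges to $|f_\chi(1)|$ along appropriate sequences and is therefore controlled by the Bloch norm. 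Once this uniform boundary estimate is in place, the two displayed inequalities combine to give $|f(\sigma+it)|\lesssim 1+\log(1/\sigma)$, completing the proof.
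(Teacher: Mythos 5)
Your lower bound is correct and complete: the family $f_t(w)=-\log(1-2^{-w+it})$, supported on the powers of $2$, has Bloch seminorm at most $1$ (since $x/(e^x-1)\le 1$) and uniformly bounded value at $1$, so it does the job; the paper instead uses the single test function $\sum_{n\ge2}\frac{1}{n\log n}n^{-s}$, a primitive of $\zeta(s+1)-1$, and your variant has the small advantage of treating general $t$ explicitly. The skeleton of your upper bound (horizontal integration from $\mathrm{Re}\,s=1$, reduction of everything to the uniform boundary estimate $\sup_{t}|f(1+it)|\lesssim\|f\|_{\text{Bloch}(\C_+)}$) is also the paper's route, where this boundary estimate is exactly the inequality $\|f\|_{\mathcal{H}^{\infty}(\C_1)}\lesssim\|f\|_{\text{Bloch}(\C_+)}$ that the paper imports from the reference \cite{queffetal}.

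The gap is that you never actually prove this boundary estimate, and neither of the two routes you sketch closes it. The Kronecker/character-shift argument is circular: the norm of Definition \ref{blochcl} contains only $|f(1)|$, not a supremum over the line $\mathrm{Re}\,s=1$, so while the normal-limit argument of Lemma \ref{propbloch}(a) transfers the seminorm to $f_\chi$, bounding $|f_\chi(1)|$ by $\|f\|_{\text{Bloch}(\C_+)}$ uniformly in $\chi$ is (again by Kronecker) precisely equivalent to the bound $\sup_t|f(1+it)|\lesssim\|f\|_{\text{Bloch}(\C_+)}$ you are trying to prove; in Lemma \ref{propbloch}(a) this issue does not arise only because the $\text{Bloch}_\mu$ norm already carries the term $\|f\|_{H^\infty(\C_1)}$. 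The approximation route is also unavailable: Proposition \ref{densite} concerns $\text{Bloch}_{\mu,0}$ under the $H_1$-condition, Dirichlet polynomials are not dense in the full Bloch space, and density alone would not yield a uniform constant anyway (the estimate would still have to be proven for polynomials). The step can be completed with the ingredient you already name: $|a_n|\le e\|f\|_{\text{Bloch}(\C_+)}$ for $n\ge2$ gives $\sup_t\bigl|\sum_{n\ge2}a_nn^{-2-it}\bigr|\le e(\zeta(2)-1)\|f\|_{\text{Bloch}(\C_+)}$; moreover $|f(2)|\le|f(1)|+\int_1^2|f'(u)|\,du\le(1+\log 2)\|f\|_{\text{Bloch}(\C_+)}$, whence $|a_1|\lesssim\|f\|_{\text{Bloch}(\C_+)}$ and thus $\sup_t|f(2+it)|\lesssim\|f\|_{\text{Bloch}(\C_+)}$; finally $|f(1+it)|\le|f(2+it)|+\int_1^2|f'(u+it)|\,du\le|f(2+it)|+\|f\|_{\text{Bloch}(\C_+)}\log 2$, which is the missing uniform bound.
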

\begin{proof}
The upper estimate was shown in \cite{queffetal}. We sketch the argument for clarity. Let $f\in\text{Bloch}(\C_+)$ be such that $\|f\|_{\text{Bloch}(\C_+)}=1$. Suppose first that $\sigma\geq1$, then
\[
|f(\sigma+it)|\leq\|f\|_{\mathcal{H}^{\infty}(\C_1)}\leq\|f\|_{\text{Bloch}(\C_+)}.
\]
Suppose now $\sigma<1$, then
\begin{align*}
|f(\sigma+it)|\leq|\int_{\sigma}^1f'(u+it)du|+|f(1+it)|
&\leq
\int_{\sigma}^1\frac{1}{u}du+1\\
&\leq 1+\log\left(
\frac{1}{\sigma}
\right).
\end{align*}
     We prove the lower bound. Let
    \[
    f(s)=\sum_{n=2}^{\infty}\frac{1}{n\log n}n^{-s}.
    \]
    This is, $f$ is a primitive of $\zeta(s+1)-1$. First, we check that $f$ is in $\text{Bloch}(\C_+)$ and its Bloch norm is less or equal than $1$. Indeed,
    \[
    |f'(s)|\leq\sum_{n\geq2}\frac{1}{n^{\sigma+1}}=\zeta(\text{Re}(s)+1)-1.
    \]
    Therefore,
    \[
    |\text{Re}(s)f'(s)|\leq\text{Re}(s)|\zeta(\text{Re}(s)+1)|\leq 2,\quad 0<\text{Re}(s)<1.
    \]    
    Now,
    \begin{align*}
f(\sigma)\geq\int_{\rm e}^{\infty}\frac{x^{-\sigma}}{x\log x}dx
        \geq\int_{\rm e}^{e^{1/\sigma}}\frac{x^{-\sigma}}{x\log x}dx
        \geq
        \frac{1}{\rm e}\log(1/\sigma)\,.
    \end{align*}
\end{proof}
\begin{teor}\label{boundednessbloch}
Let $g\in\mathcal{D}$. The operator $T_g$ is bounded on $\emph{Bloch}(\C_+)$ if and only if $g\in\emph{Bloch}_{\omega}(\C_+)$, where
    $\omega(\sigma)=\sigma(1+\log(1/\sigma))$.
\end{teor}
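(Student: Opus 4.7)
The key observation is the identity $(T_g f)'(s)=f(s)g'(s)$, which reduces the $\text{Bloch}(\C_+)$-norm of $T_g f$ to estimating $\sup_{s\in\C_+}\sigma|f(s)g'(s)|$, together with controlling $(T_g f)(1)$. Both directions of the theorem then proceed by pairing the pointwise size of $|f|$ from Proposition \ref{functional} against the growth of $|g'|$.

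\emph{Sufficiency.} Assume $g\in\text{Bloch}_\omega(\C_+)\cap\mathcal D$ and split the supremum at $\sigma=1$. On $0<\sigma\leq 1$, the upper estimate $|f(s)|\lesssim(1+\log(1/\sigma))\|f\|_{\text{Bloch}}$ from Proposition \ref{functional} gives
\[
\sigma|f(s)g'(s)|\lesssim \sigma(1+\log(1/\sigma))|g'(s)|\,\|f\|_{\text{Bloch}}=\omega(\sigma)|g'(s)|\,\|f\|_{\text{Bloch}}\leq\|g\|_{\text{Bloch}_\omega}\|f\|_{\text{Bloch}}.
\]
For $\sigma>1$, I would first integrate the bound $|g'(x+it)|\leq\|g\|_{\text{Bloch}_\omega}/\omega(x)$ over $x\in[1/2,1]$ (where $1/\omega\in L^1$) to conclude $g\in H^\infty(\C_{1/2})$ with norm $\lesssim\|g\|_{\text{Bloch}_\omega}$; Cauchy's inequality applied on the disc of radius $\sigma-1/2\geq\sigma/2$ then yields $\sigma|g'(s)|\lesssim\|g\|_{\text{Bloch}_\omega}$, while $|f(s)|\leq\|f\|_{H^\infty(\C_1)}\leq\|f\|_{\text{Bloch}}$. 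Finally, $(T_g f)(1)$ is finite because $g'$ is integrable along the real ray $[1,\infty)$ by exponential decay of the Dirichlet series $g-a_1(g)$.

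\emph{Necessity.} Suppose $T_g$ is bounded on $\text{Bloch}(\C_+)\cap\mathcal D$. Testing first with the constant $f\equiv 1$ yields $T_g 1 = g-a_1(g)\in\text{Bloch}(\C_+)$, hence $g\in H^\infty(\C_1)$ by Remark \ref{concejo}. For the pointwise estimate on $g'$, fix $s_0=\sigma_0+it_0$ with $0<\sigma_0\leq 1$ and take
\[
F_{s_0}(s)=1+\sum_{n\geq 2}\frac{n^{it_0}}{n\log n}\,n^{-s}=(1+f_0)(s-it_0),
\]
where $f_0(s)=\sum_{n\geq 2}\frac{1}{n\log n}n^{-s}$ is the extremal Dirichlet series from the lower bound of Proposition \ref{functional}. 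Then $F_{s_0}\in\text{Bloch}(\C_+)\cap\mathcal D$ with $\|F_{s_0}\|_{\text{Bloch}}\lesssim 1$ uniformly in $s_0$ (vertical invariance of $\sup\sigma|F'|$ together with boundedness of $1+f_0$ on $\C_1$), and $F_{s_0}(s_0)=(1+f_0)(\sigma_0)\gtrsim 1+\log(1/\sigma_0)$. Feeding $F_{s_0}$ into the boundedness of $T_g$ and using $(T_g F_{s_0})'(s_0)=F_{s_0}(s_0)g'(s_0)$ gives
\[
\omega(\sigma_0)|g'(s_0)|\lesssim\sigma_0|F_{s_0}(s_0)g'(s_0)|\leq\|T_g F_{s_0}\|_{\text{Bloch}}\leq\|T_g\|\,\|F_{s_0}\|_{\text{Bloch}}\lesssim\|T_g\|,
\]
as required.

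The main obstacle is the necessity step: I need a family of Dirichlet series (not merely general analytic functions in $\text{Bloch}(\C_+)$) of uniformly bounded Bloch norm, each of which attains the lower bound $1+\log(1/\sigma_0)$ at its prescribed point $s_0$. The resolution is to exploit the vertical translation invariance of the Bloch norm and the fact that vertical translates of Dirichlet series remain Dirichlet series (with unimodularly rotated coefficients), which lets the extremal real-axis function of Proposition \ref{functional} be repositioned over any $t_0\in\R$ without losing the Bloch bound; the additive constant $1$ in $F_{s_0}$ handles the regime $\sigma_0$ close to $1$, where $\log(1/\sigma_0)$ alone would be too small.
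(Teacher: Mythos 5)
Your proposal is correct and takes essentially the same route as the paper: both directions rest on the identity $(T_g f)'=fg'$ combined with the two-sided evaluation-functional estimate of Proposition \ref{functional}, the necessity being driven by the same extremal series $\sum_{n\geq2}(n\log n)^{-1}n^{-s}$ (which you merely make explicit, vertically rotated, where the paper compresses this into the statement that the supremum is finite iff $\sup_s\sigma|g'(s)|\,\|\delta_s\|<\infty$). Your treatment of the region $\sigma>1$ via $g\in H^{\infty}(\C_{1/2})$ plus a Cauchy estimate, and your explicit mention of the $(T_g f)(1)$ term, are only minor variations on the paper's use of Lemma \ref{born} and the coefficient bound of Lemma \ref{propbloch}.
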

\begin{proof}
  The boundedness of the $T_g$ occurs if and only if
  \[
  \sup_{\|f\|_{\text{Bloch}(\C_+)}=1}\|T_gf\|_{\text{Bloch}(\C_+)}<\infty.
  \]
  This is, if
  \[
  \sup_{\|f\|_{\text{Bloch}(\C_+)}=1}\sup_{\substack{\sigma>0\\ t\in\R}}\sigma|f(\sigma+it)||g'(\sigma+it)|<\infty.
  \]
  This supremum is finite if and only if 
  \[
  \sup_{\substack{\sigma>0\\ t\in\R\\ s=\sigma+it}}\sigma|g'(\sigma+it)|\|\delta_s\|_{(\rm{Bloch}(\C_+))^*}<\infty.
  \]
  Now, by Proposition \ref{functional}, this supremum being finite is equivalent to the following 
  \begin{equation}\label{Th42proof1}
   \sup_{\substack{\sigma>0\\ t\in\R}}\sigma(1+\log(1/\sigma))|g'(\sigma+it)|<\infty.
  \end{equation}
  Therefore the operator is bounded if and only if $g\in\rm{Bloch}_{\omega}(\C_+)$ for $\omega=\sigma(1+\log(1/\sigma))$, as desired.
  
  Indeed, when condition \eqref{Th42proof1} is satisfied: clearly $g$ is bounded in $\C_1$ and this condition implies that $g\in\rm{Bloch}_{\omega}(\C_+)$. Conversely when $g\in\rm{Bloch}_{\omega}(\C_+)$, \eqref{Th42proof1} is satisfied on the strip $\C_+\setminus\overline{\C_1}$. 
  The function $g$ being bounded, by Lemma \ref{born}, on every $\C_\varepsilon$,  $g'$ is bounded as well on $\C_1$.
Moreover we have for instance, for $\sigma\geq3$, thanks to Lemma \ref{propbloch},
$$\sigma(1+\log(1/\sigma))|g'(\sigma+it)|\lesssim\sum_{n\geq2}\frac{\log(n)\log(\log(n))}{n^2}\,\cdot$$
and \eqref{Th42proof1} is satisfied.
\end{proof}

We can provide a \emph{compactness} version of the preceding theorem.
\begin{teor}
 Let $g\in\mathcal{D}$.
The operator $T_g$ is compact on $\emph{Bloch}(\C_+)$ if and only if $g\in\emph{Bloch}_{\omega,0}(\C_+)$, where
    $\omega=\sigma(1+\log(1/\sigma))$.
\end{teor}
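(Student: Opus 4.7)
The argument runs in parallel with Theorem \ref{boundednessbloch}, replacing the $O$-condition by an $o$-condition throughout. We treat sufficiency and necessity separately.

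\emph{Sufficiency.} Assume $g\in\text{Bloch}_{\omega,0}(\C_+)$. Let $(f_n)$ be bounded in $\text{Bloch}(\C_+)\cap\mathcal{D}$. Proposition \ref{functional} shows that $(f_n)$ is uniformly bounded on every compact subset of $\C_+$, so a Montel-type extraction yields a subsequence converging locally uniformly to some $f\in\text{Bloch}(\C_+)\cap\mathcal{D}$; setting $h_n:=f_n-f$ reduces the task to proving $\|T_g h_n\|_{\text{Bloch}(\C_+)}\to 0$. Given $\varepsilon>0$, choose $\delta\in(0,1)$ with $\omega(\sigma)\sup_t |g'(\sigma+it)|<\varepsilon$ for $\sigma<\delta$. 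On the strip $\{0<\sigma<\delta\}$, Proposition \ref{functional} gives $|h_n(s)|\lesssim(1+\log(1/\sigma))\|h_n\|_{\text{Bloch}}$, and therefore $\sigma|h_n(s)g'(s)|\lesssim\omega(\sigma)|g'(s)|\,\|h_n\|\lesssim\varepsilon$. On $\{\sigma\geq\delta\}$ the factor $\sigma|g'(s)|$ is bounded by $\|g\|_{\text{Bloch}}$, so it remains to check that $\sup_{\sigma\geq\delta,\,t\in\R}|h_n(s)|\to 0$, which will follow from the Lemma \ref{propbloch}-style coefficient bounds for the classical Bloch space combined with the local uniform convergence $h_n\to 0$ and the Dirichlet series decay in $\sigma$; the constant $|T_g h_n(1)|\to 0$ by dominated convergence.

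\emph{Necessity.} Suppose, for contradiction, that $g\notin\text{Bloch}_{\omega,0}(\C_+)$: there exist $\varepsilon_0>0$ and points $s_n=\sigma_n+it_n$ with $\sigma_n\to 0^+$ and $\omega(\sigma_n)|g'(s_n)|\geq\varepsilon_0$. Take as the test function the primitive of $\zeta(\cdot+1)-1$ used in the proof of Proposition \ref{functional}, namely $F(s)=\sum_{k\geq 2}(k\log k)^{-1}k^{-s}$, which satisfies $\|F\|_{\text{Bloch}}\lesssim 1$ and $F(\sigma)\gtrsim\log(1/\sigma)$. By Kronecker's theorem, for each $n$ pick a character $\chi_n\in\T^\infty$ with $\chi_n(k)$ approximating $k^{it_n}$ on a finite index set large enough that
\[
|F_{\chi_n}(s_n)|\;\geq\;\tfrac12\,F(\sigma_n)\;\gtrsim\;\log(1/\sigma_n).
\]
Since the Bloch norm is invariant under the character action (the argument of Lemma \ref{propbloch}(a) applies verbatim in the classical Bloch setting), the sequence $(F_{\chi_n})$ is bounded in $\text{Bloch}(\C_+)\cap\mathcal{D}$ and
\[
\|T_g F_{\chi_n}\|_{\text{Bloch}}\;\geq\;\sigma_n\,|F_{\chi_n}(s_n)|\,|g'(s_n)|\;\gtrsim\;\omega(\sigma_n)|g'(s_n)|\;\geq\;\varepsilon_0.
\]
To conclude that $(T_g F_{\chi_n})$ admits no Cauchy subsequence, I would refine $(t_n)$ and $(\chi_n)$ via Bohr--Kronecker independence so that the witness points decouple, in the sense that $|F_{\chi_k}(s_l)|\ll\log(1/\sigma_l)$ whenever $k\neq l$; then $\|T_g F_{\chi_l}-T_g F_{\chi_k}\|_{\text{Bloch}}\gtrsim\varepsilon_0$, contradicting compactness.

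The main obstacle lies in the necessity step: organising the joint refinement so that $(F_{\chi_n})$ becomes $\varepsilon_0$-separated under $T_g$. The plan is to exploit both the concentration of $|F|$ near the real axis --- where $F(\sigma)\approx\log(1/\sigma)$ is largest --- and the flexibility that Kronecker affords in choosing characters which damp $F_{\chi_k}$ at the off-diagonal witness points $s_l$, relying on the fact that $F(\sigma_l+i\tau)$ is only of order one for generic $\tau$ while its peak at $\tau=0$ is of order $\log(1/\sigma_l)$. Handling the bounded and unbounded regimes of $(t_n)$ simultaneously via a diagonal extraction is the technical heart of the argument.
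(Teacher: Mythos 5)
Your sufficiency argument is essentially the paper's: extract a locally uniformly convergent subsequence, reduce to $h_n\to0$, and split the half-plane at a level $\sigma=\delta$ where the little-$o$ hypothesis on $g$ controls the boundary strip and uniform convergence controls the rest. The only soft spot there is the step you defer ("will follow from..."): you need $h_n\to0$ \emph{uniformly on the whole half-plane} $\C_\delta$, not just locally uniformly, and in the paper this is exactly what the cited normal-family argument for Dirichlet series delivers; your sketch via coefficient bounds is plausible but not carried out. The genuine gap is in necessity. The entire weight of your contradiction rests on the decoupling property $|F_{\chi_k}(s_l)|\ll\log(1/\sigma_l)$ for $k\neq l$, and you explicitly leave it as a plan ("I would refine $(t_n)$ and $(\chi_n)$...", "the technical heart"). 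This is not a routine refinement. If the witness points all sit on the real axis ($t_n\equiv0$, which can perfectly well happen when $g'$ peaks there), vertical translates give $F_{\chi_k}(s_l)=F(\sigma_l)\approx\log(1/\sigma_l)$ and there is no decoupling at all; a Kronecker approximation only pins down $\chi_k$ on a finite set of integers, while the uncontrolled tail of $F$ — which has nonnegative coefficients and carries the whole $\log(1/\sigma_l)$ divergence as $\sigma_l\to0^+$ — must be damped simultaneously at infinitely many later points $s_l$, so one needs a genuine inductive or probabilistic construction (sparse $\sigma_n$, tails of $\chi_k$ chosen to kill the mass at all future witnesses) that you have not supplied. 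Without separation, knowing only $\|T_gF_{\chi_n}\|_{\text{Bloch}(\C_+)}\gtrsim\varepsilon_0$ does not contradict compactness: the images could all accumulate at a single nonzero limit.

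The paper proves necessity by a completely different and much shorter route that avoids test functions altogether: the normalized derivative evaluations $\sigma\Delta_s$, $\Delta_s(f)=f'(s)$, lie in the unit ball of the dual of $\text{Bloch}(\C_+)$, and for $\sigma_n\to0^+$ the sequence $\sigma_n\Delta_{s_n}$ is weak$^\ast$ null (checked on Dirichlet polynomials and extended by a density argument); compactness of $T_g$ makes $T_g^\ast$ send this bounded weak$^\ast$ null sequence to a norm-null one, and since $T_g^\ast(\sigma_n\Delta_{s_n})=\sigma_n g'(s_n)\delta_{s_n}$, the lower bound $\|\delta_{s_n}\|\gtrsim\log(1/\sigma_n)$ from Proposition \ref{functional} immediately yields the little-$o$ condition, i.e. $g\in\text{Bloch}_{\omega,0}(\C_+)$. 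So: your sufficiency is fine up to one deferred (standard) step, but your necessity is incomplete as written; either carry out the separation construction in full or switch to the adjoint/weak$^\ast$ argument, which sidesteps it entirely.
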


\begin{proof} Thanks to the preceding theorem, and since $\rm{Bloch}_{\omega,0}(\C_+)\subset \rm{Bloch}_{\omega}(\C_+)$, we can assume in the following that $g\in\rm{Bloch}_{\omega}(\C_+)$.

Assume that $T_g$ is compact, its adjoint $T_g^\ast$ is compact as well. 
On the other hand, the functional $f\mapsto\Delta_s(f)= f'(s)$ is clearly bounded on $\rm{Bloch}(\C_+)$ with norm less than or equal to $1/\sigma$.
In other words the functional  ${\rm Re}(s)\Delta_{s}$ lies in the unit ball of the dual of  $\rm{Bloch}(\C_+)$ for every $s\in\C_+$.
Moreover for every polynomial $f$, we have Re$(s)\Delta_s(f)\to0$ when ${\rm Re}(s)\to0^+$. 
Therefore, by a standard density argument, for every sequence $(s_n)$ satisfying $\sigma_n={\rm Re}(s_n)\to0^+$, the sequence $\sigma_n\Delta_{s_n}$ is weak$^\ast$ converging to $0$.
By compactness of $T_g^\ast$, we have $\|T_g^\ast(\sigma_n\Delta_{s_n})\|\to0$ as a functional on $\rm{Bloch}(\C_+)$.
But clearly 
$$T_g^\ast(\sigma_n\Delta_{s_n})=\sigma_n g'(s_n)\delta_{s_n}$$
and we get $\displaystyle \sigma_n |g'(s_n)|.\|\delta_{s_n}\|\to0$ which expresses exactly that $g\in\rm{Bloch}_{\omega,0}(\C_+)$, since $(s_n)$ is arbitrary.

Now assume that $g\in\rm{Bloch}_{\omega,0}(\C_+)\subset \rm{Bloch}(\C_+)$ and take any sequence $(f_n)$ in the unit ball of $\rm{Bloch}(\C_+)$. 
By a now standard normal family argument (see \cite{bayart2}), this sequence converges uniformly on every half plane $\C_a$, with $a>0$, {\sl a fortiori} pointwise, to some $f$ in the unit ball of $\rm{Bloch}(\C_+)$. 
We may assume that $f=0$ in the sequel. 
Therefore, fixing $\varepsilon>0$, the hypothesis on $g$ implies that there exists $\eta>0$ such that  
$${\rm Re}(s)\|\delta_{\rm{Re}(s)}\|_{(\rm{Bloch}(\C_+))^\ast}|g'(s)|\leq\varepsilon$$ as soon as ${\rm Re}(s)\leq\eta$.
We get, when ${\rm Re}(s)\leq\eta$,
$${\rm Re}(s)\big|\big(T_g(f_n)\big)'(s)\big|={\rm Re}(s)|g'(s)||f_n(s)|\leq{\rm Re}(s)|g'(s)|\|\delta_{\rm{Re}(s)}\|_{(\rm{Bloch}(\C_+))^\ast}\leq\varepsilon\,.$$

On the other hand, we know that $f_n\to 0$ uniformly in $\C_\eta$ so
$$\sup_{{\rm Re}(s)>\eta}{\rm Re}(s)\big|\big(T_g(f_n)\big)'(s)\big|\leq\|g\|_{\rm{Bloch}(\C_+)}\,\sup_{{\rm Re}(s)>\eta}|f_n(s)|\longrightarrow0\,.$$
Gathering everything we conclude that $\|T_g(f_n)(s)\|_{\rm{Bloch}(\C_+)}$ converges to $0$.\end{proof}

\begin{remark}
   Notice that the proof of Proposition \ref{functional} provides an example showing that, for $\omega$ as above, we have the strict inclusion $\text{Bloch}_{\omega}\subsetneq\text{Bloch}(\C_+)$. Indeed, it was seen there that
   $f(s)=\sum_{n=2}^{\infty}\frac{1}{n\log n}n^{-s}$ belongs to $\text{Bloch}(\C_+)$. However, when $\sigma\to0^+$, we have
   \[
\sigma\log\left(1+1/\sigma\right)|f'(\sigma)|\sim\log(1+1/\sigma)\longrightarrow+\infty\,.
   \]
\end{remark}

\section{The spaces \texorpdfstring{$\mathcal{H}^p$ and $\mathcal{A}_\mu^p$}{apmu}}\label{sec:apmuetal}
\subsection{The \texorpdfstring{$\mathcal{H}^p$}{}-spaces}
One of the key ingredients which enabled in \cite{HedLinSeip} the introduction of the Hardy spaces of Dirichlet series is an observation due to H. Bohr. 
Essentially, Bohr observed that a Dirichlet series $f(s)=\sum_{n\geq1}a_nn^{-s}$ could be seen as a function on the infinite polydisc $\D^{\infty}$ by means of the fundamental theorem of arithmetic. 
Indeed, given $n\geq2$, it can be (uniquely) written as $n=p_1^{\alpha_1}\cdots p_r^{\alpha_r}$, where $\alpha_j\in\N\cup\{0\}$ for all $j$, and $\{p_j\}$ is the increasing sequence of prime numbers. Then, letting $z_j=p_j^{-s}$, 
\[
f(s)=\sum_{n=1}^{\infty}a_n(p_1^{-s})^{\alpha_1}\cdots (p_r^{-s})^{\alpha_r}=\sum_{n=1}^{\infty}a_nz_1^{\alpha_1}\cdots z_r^{\alpha_r}.
\]
This allows us to write a Dirichlet series $f$ as a function $\mathcal{B}f$ in the infinite polydisc 
\[
\mathcal{B} f(z_1,z_2,\ldots)=\sum_{\substack{n\geq1\\ n=p_1^{\alpha_1}\cdots p_r^{\alpha_r}}}a_nz_1^{\alpha_1}\dots z_r^{\alpha_r}.
\]
Let $m_{\infty}$ be the normalised Haar measure on the infinite polytorus $\T^{\infty}$, that is, the product of the normalised Lebesgue measure of the torus $\T$ in each variable. Fix $p\in[1,\infty)$. We define the space $H^p(\T^{\infty})$ as the closure of the analytic polynomials in the $L^p(\T^{\infty},m_{\infty})$-norm. Now, given a Dirichlet polynomial $P$, its Bohr lift $\mathcal{B}P$ belongs to the space $H^p(\T^{\infty})$. This allows us to define its $\mathcal{H}^p$-norm as
\[
\|P\|_{\mathcal{H}^p}
=
\|\mathcal{B} P\|_{H^p(\T^{\infty})}.
\]
The $\mathcal{H}^p$-spaces of Dirichlet series are defined as the completion of the Dirichlet polynomials in the $\mathcal{H}^p$-norm.

\subsection{The \texorpdfstring{$\mathcal{A}^p_{\mu}$}{}-spaces}
 
In \cite{pascal}, a family of Bergman spaces of Dirichlet series was introduced. Namely, there the authors considered an admissible measure $\mu$ on $(0,\infty)$ and defined the norm for a Dirichlet polynomial $P$ as
\begin{equation*}
    \|P\|_{\mathcal{A}^p_{\mu}}
    =
    \left(
    \int_0^{\infty}\|P_{\sigma}\|_{\mathcal{H}^p}^pd\mu(\sigma)
    \right)^{\frac1p},
\end{equation*}
where $P_{\sigma}(s)=P(s+\sigma)$. Then, the space $\mathcal{A}_{\mu}^p$ is the result of taking the completion of the Dirichlet polynomials $\mathcal{P}$ with respect to this norm. The resulting space is a Banach space of Dirichlet series in the half-plane $\C_{1/2}$ as it was shown in \cite[Theorem 5]{pascal}. For any function in the space, we compute its norm  as (\cite[Theorem 6]{pascal})
\[
\|f\|_{\mathcal{A}_{\mu}^p}=
\lim_{c\to0^+}\|f_c\|_{\mathcal{A}_{\mu}^p}.
\]
We shall pay special attention to the density measures. This is, the measures $d\mu(\sigma)=h(\sigma)d\sigma$, where $h\geq0$, $0\in\text{supp}(h)$ and $\|h\|_{L^1(\R_+)}=1$. Then, we shall denote these spaces as $\mathcal{A}_{\mu_h}^p$. There is a certain family of densities which in the case $p=2$ gives rise to a family of Hilbert spaces of Dirichlet series first studied by McCarthy in \cite{mcarthy}. These are the $\mathcal{A}^2_{\mu_{\alpha}}$ spaces associated to the family of densities
\[
h(\sigma)=\frac{2^{\alpha+1}}{\Gamma(\alpha+1)}\sigma^{\alpha}e^{-2\sigma},\quad \alpha>-1.
\]
In the Hilbert space case, we recover a weighted Bergman space of Dirichlet series and the norm can be easily computed through the coefficients of the series. In order to ensure good properties for this space, we must impose some restrictions on these weights.
\begin{defi}\label{defweig}
    Let $\mu$ be a probability measure on $(0,\infty)$ such that $0\in\emph{supp}(\mu)$. For $n\geq1$, we define the Bergman weight as
    \begin{equation}\label{weight}
        w_n:=\int_0^{\infty}n^{-2\sigma}d\mu(\sigma).
    \end{equation}
\end{defi}
Hence, we can also define the weighted Bergman spaces $\mathcal{A}^2_{\mu}$ as
\begin{equation}\label{apmu}
    \mathcal{A}_{\mu}^2=\left\{
    f(s)\in\mathcal{D}: f(s)=\sum_{n\geq1}a_nn^{-s}, \|f\|=\left(
     \sum_{n\geq1}|a_n|^2w_n
    \right)^{1/2}<\infty
    \right\}.
\end{equation}
In fact, this definition coincides with the one consisting on taking the completion of the Dirichlet polynomials in the $\mathcal{A}^2_{\mu}$-norm from \eqref{apmu}.

\ 


McCarthy (\cite{mcarthy}) proved the following property for the weights from Definition \ref{defweig}:
\[
\forall \varepsilon>0, \exists c>0: \forall n\geq1,\quad w_n>cn^{-\varepsilon}.
\]
We present an extra property of these weights which will be needed later in our discussion and which, in particular, implies the McCarthy condition.
\begin{lema}\label{proppesos}
    Let $w_n$ be as in \eqref{weight}. Then, $w_{mn}\geq w_mw_n$ for all $m,n\in\N$.
\end{lema}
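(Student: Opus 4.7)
The proof plan is to recognise this as an instance of Chebyshev's integral inequality (the continuous analogue of the rearrangement / FKG-type inequality for comonotonic functions) applied to the probability measure $\mu$ on $(0,\infty)$.

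First, I would rewrite the quantity of interest in a convenient covariance form. Since
\[
w_{mn}=\int_0^\infty (mn)^{-2\sigma}\,d\mu(\sigma)=\int_0^\infty m^{-2\sigma}\,n^{-2\sigma}\,d\mu(\sigma),
\]
and setting $f(\sigma):=m^{-2\sigma}$, $g(\sigma):=n^{-2\sigma}$, the claim $w_{mn}\geq w_m w_n$ is exactly
\[
\int_0^\infty f g\,d\mu \;\geq\; \Big(\int_0^\infty f\,d\mu\Big)\Big(\int_0^\infty g\,d\mu\Big).
\]
Both $f$ and $g$ are non-increasing functions of $\sigma\in(0,\infty)$ (strictly decreasing if $m,n\geq 2$, and constant if $m=1$ or $n=1$, in which cases the inequality is trivial since $w_1=1$).

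The key step is the standard "doubling" trick: for any $\sigma,\tau\in(0,\infty)$, the two factors $(f(\sigma)-f(\tau))$ and $(g(\sigma)-g(\tau))$ have the same sign (both non-positive when $\sigma\geq\tau$, both non-negative when $\sigma\leq\tau$), so
\[
\bigl(f(\sigma)-f(\tau)\bigr)\bigl(g(\sigma)-g(\tau)\bigr)\geq 0.
\]
Integrating this inequality against the product measure $d\mu(\sigma)\,d\mu(\tau)$ on $(0,\infty)^2$, expanding the product, and using Fubini together with the fact that $\mu$ is a probability measure (so $\int d\mu=1$), one obtains
\[
2\int_0^\infty fg\,d\mu\;-\;2\Big(\int_0^\infty f\,d\mu\Big)\Big(\int_0^\infty g\,d\mu\Big)\;\geq\;0,
\]
which is exactly the desired inequality.

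There is essentially no obstacle: the only minor point to check is the integrability needed to justify Fubini, but since $0\leq f,g\leq 1$ on $(0,\infty)$ and $\mu$ is finite, everything in sight is bounded and the argument goes through unchanged. The conclusion $w_{mn}\geq w_m w_n$ follows.
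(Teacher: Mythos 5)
Your proof is correct and follows essentially the same route as the paper: both reduce the claim to Chebyshev's integral inequality for the comonotone functions $\sigma\mapsto m^{-2\sigma}$ and $\sigma\mapsto n^{-2\sigma}$ with respect to the probability measure $\mu$. The only difference is that you also supply the standard doubling-trick proof of Chebyshev's inequality, which the paper simply cites as a known fact.
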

\begin{proof}
    The claim is a consequence of the following fact due to Chebyshev: let $\mu$ be a probability measure on $(0,\infty)$ and $f$, $g$ two real functions on $(0,\infty)$ with the same monotony. Then,
    \[
    \int_0^{\infty}fgd\mu\geq \int_0^{\infty}fd\mu\int_0^{\infty}gd\mu.
    \]
    Let $n,m\in\N$. We shall apply this to the functions $f(\sigma)=n^{-2\sigma}$ and $g(\sigma)=m^{-2\sigma}$, which, clearly have the same monotony. Therefore:
    \[
    w_{mn}=\int_0^{\infty}(mn)^{-2\sigma}d\mu(\sigma)\geq\int_0^{\infty}m^{-2\sigma}d\mu(\sigma)\int_0^{\infty}n^{-2\sigma}d\mu(\sigma)
    =w_mw_n.
    \]

\end{proof}
Notice that these weights satisfy the McCarthy condition from above. Indeed, the Lemma \ref{proppesos} gives us that $w_{2^k}\geq (w_2)^k$. Since $w_2>0$, there exists $\varepsilon>0$ such that $w_2\geq 2^{-\varepsilon}$. Hence,
\[
w_n\geq 2^{-\varepsilon} n^{-\varepsilon}.
\]

\subsection{Auxiliary results: $\mathcal{A}^p_{\mu}$ tools} 
We now present some basic properties of these Bergman spaces which are nothing but the corresponding version of the known results for $\mathcal{H}^p$.
\begin{lema}\label{a1}
    Let $1\leq p<\infty$. Given $f(s)=\sum_{n\geq1}a_nn^{-s}$ belonging to $\mathcal{A}^p_{\mu}$, we have that
    \[
    |a_1|\leq \|f\|_{\mathcal{A}^p_{\mu}}.
    \]
\end{lema}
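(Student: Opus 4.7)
The plan is to first show that the linear functional $f \mapsto a_1$ is bounded by the $\mathcal{H}^p$-norm on Dirichlet polynomials, and then integrate against $\mu$, exploiting the fact that $\mu$ is a probability measure. The approach relies crucially on two facts already available in the excerpt: the Bohr lift identifies $\|P\|_{\mathcal{H}^p}$ with $\|\mathcal{B}P\|_{L^p(\T^\infty, m_\infty)}$, and the space $\mathcal{A}^p_\mu$ is the completion of Dirichlet polynomials under the norm $\|P\|_{\mathcal{A}^p_\mu} = \bigl(\int_0^\infty \|P_\sigma\|_{\mathcal{H}^p}^p \, d\mu(\sigma)\bigr)^{1/p}$.

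First I would establish the inequality for a Dirichlet polynomial $P(s) = \sum_n a_n n^{-s}$. For any fixed $\sigma > 0$, the translated polynomial $P_\sigma(s) = \sum_n a_n n^{-\sigma} n^{-s}$ has constant term $a_1$, and its Bohr lift $\mathcal{B}P_\sigma$ is an analytic polynomial on $\T^\infty$ whose integral against $m_\infty$ equals $a_1$. By Hölder's inequality (or Jensen's inequality, using that $m_\infty$ is a probability measure),
\[
|a_1| = \Bigl| \int_{\T^\infty} \mathcal{B}P_\sigma \, dm_\infty \Bigr| \leq \Bigl( \int_{\T^\infty} |\mathcal{B}P_\sigma|^p \, dm_\infty \Bigr)^{1/p} = \|P_\sigma\|_{\mathcal{H}^p}.
\]
Raising to the $p$-th power and integrating against the probability measure $\mu$ then yields
\[
|a_1|^p = |a_1|^p \int_0^\infty d\mu(\sigma) \leq \int_0^\infty \|P_\sigma\|_{\mathcal{H}^p}^p \, d\mu(\sigma) = \|P\|_{\mathcal{A}^p_\mu}^p.
\]

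To extend the inequality to an arbitrary $f \in \mathcal{A}^p_\mu$, take a sequence of Dirichlet polynomials $P_k \to f$ in $\mathcal{A}^p_\mu$. Writing $P_k(s) = \sum_n a_n^{(k)} n^{-s}$, the estimate above gives $|a_1^{(k)} - a_1^{(j)}| \leq \|P_k - P_j\|_{\mathcal{A}^p_\mu}$, so $\{a_1^{(k)}\}$ is a Cauchy sequence converging to some limit, which must coincide with the first coefficient $a_1$ of $f$ (since $\mathcal{A}^p_\mu$ embeds into the space of Dirichlet series convergent on $\C_{1/2}$, the coefficients depend continuously on $f$). Passing to the limit in $|a_1^{(k)}| \leq \|P_k\|_{\mathcal{A}^p_\mu}$ gives the result.

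There is no real obstacle here; the only subtle point is justifying that the limit of the first coefficients of an approximating sequence is the first coefficient of the limit function, which is an immediate consequence of the Banach-space-of-Dirichlet-series structure of $\mathcal{A}^p_\mu$ on $\C_{1/2}$ as recalled before the lemma.
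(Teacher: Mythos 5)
Your proof is correct, and it is essentially the argument the paper has in mind: the lemma is stated there without proof as an immediate consequence of the $\mathcal{H}^p$ fact that $|a_1|=\bigl|\int_{\T^\infty}\mathcal{B}P_\sigma\,dm_\infty\bigr|\leq\|P_\sigma\|_{\mathcal{H}^p}$, integration against the probability measure $\mu$, and density of the Dirichlet polynomials. Your closing remark on identifying the limit of the first coefficients with $a_1(f)$ is the right (and standard) justification, resting on $\mathcal{A}^p_\mu$ being a Banach space of Dirichlet series on $\C_{1/2}$.
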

The next lemma follows easily from both the corresponding $\mathcal{H}^p$ inequality (\cite[Theorem C]{brevigcorr}) and the  definition of the $\mathcal{A}_{\mu}^p$-norm for polynomials. 
The result for general $\mathcal{A}_{\mu}^p$ follows by a standard density argument.
 
\begin{lema}\label{trahor}
    Let $1\leq p<\infty$. Then, for every $f\in\mathcal{A}_{\mu}^p$,
    \[
    \|f_{\sigma}\|_{\mathcal{A}_{\mu}^p}\leq \|f_x\|_{\mathcal{A}_{\mu}^p},\quad \sigma\geq x\geq0.
    \]
\end{lema}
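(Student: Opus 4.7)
The plan is to first prove the inequality for Dirichlet polynomials and then extend it to arbitrary $f\in\mathcal{A}^p_{\mu}$ by density. For a Dirichlet polynomial $P$, note that by definition
\[
\|P_{\sigma}\|_{\mathcal{A}^p_{\mu}}^p=\int_0^{\infty}\|(P_{\sigma})_{t}\|_{\mathcal{H}^p}^p\,d\mu(t)=\int_0^{\infty}\|P_{\sigma+t}\|_{\mathcal{H}^p}^p\,d\mu(t),
\]
and similarly for $\|P_x\|_{\mathcal{A}^p_{\mu}}$. So the $\mathcal{A}^p_{\mu}$-inequality reduces to a pointwise-in-$t$ inequality on $\mathcal{H}^p$-norms.

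Next I would invoke the corresponding $\mathcal{H}^p$ horizontal translation inequality (Brevig's corrected version cited as \cite[Theorem C]{brevigcorr}): for every Dirichlet polynomial $Q$ and every pair $u\geq v\geq 0$,
\[
\|Q_u\|_{\mathcal{H}^p}\leq \|Q_v\|_{\mathcal{H}^p}.
\]
Applying this with $Q=P$, $u=\sigma+t$, $v=x+t$ (observe $\sigma+t\geq x+t\geq 0$ since $\sigma\geq x\geq 0$), yields
\[
\|P_{\sigma+t}\|_{\mathcal{H}^p}\leq\|P_{x+t}\|_{\mathcal{H}^p}\qquad \text{for every } t\geq 0.
\]
Raising to the $p$-th power and integrating against $d\mu(t)$ over $(0,\infty)$ produces
\[
\|P_{\sigma}\|_{\mathcal{A}^p_{\mu}}\leq\|P_x\|_{\mathcal{A}^p_{\mu}},
\]
which is the desired inequality for Dirichlet polynomials.

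Finally, I would extend to general $f\in\mathcal{A}^p_{\mu}$ by density. Since $\mathcal{A}^p_{\mu}$ is the completion of $\mathcal{P}$ in the $\mathcal{A}^p_{\mu}$-norm, there exists a sequence of Dirichlet polynomials $\{P_n\}$ with $\|f-P_n\|_{\mathcal{A}^p_{\mu}}\to 0$. Horizontal translation $T_\sigma\colon g\mapsto g_\sigma$ is a contraction on $\mathcal{A}^p_{\mu}$ (again by the same $\mathcal{H}^p$ inequality applied fiberwise), so $(P_n)_{\sigma}\to f_{\sigma}$ and $(P_n)_x\to f_x$ in $\mathcal{A}^p_{\mu}$ as $n\to\infty$. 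Passing to the limit in $\|(P_n)_{\sigma}\|_{\mathcal{A}^p_{\mu}}\leq\|(P_n)_x\|_{\mathcal{A}^p_{\mu}}$ gives the conclusion. There is no real obstacle; the only subtle point is simply to make sure that the $\mathcal{H}^p$-translation inequality is applied pointwise in the measure variable $t$ before integrating, which is a direct consequence of the $\mathcal{H}^p$-result and Fubini/monotonicity of the integral.
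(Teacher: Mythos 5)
Your proof is correct and follows essentially the same route the paper sketches: reduce to Dirichlet polynomials via the definition of the $\mathcal{A}^p_{\mu}$-norm, apply the $\mathcal{H}^p$ translation monotonicity of \cite[Theorem C]{brevigcorr} fiberwise and integrate against $d\mu$, then extend by the standard density argument. The paper gives no further detail, so nothing is missing from your write-up beyond what it, too, leaves to the reader (namely that the norm limit of $(P_n)_\sigma$ is identified with $f_\sigma$ via the bounded point evaluations of $\mathcal{A}^p_{\mu}$).
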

The next lemma also follows immediately from its $\mathcal{H}^p$ version (\cite[Theorem C]{brevigcorr}). It is worth mentioning that for $p>1$ the result can be proven without passing through the $\mathcal{H}^p$ theorem. Indeed, using the fact that $(e_n)_n=n^{-s}$ is a Schauder basis in $\mathcal{A}_{\mu}^p$ (\cite{aleolsen}, \cite{pascal}) and applying Abel's summation formula, we can deduce the result up to some constant.
\begin{lema}\label{prinul}
    Let $1\leq p<\infty$. If $f(s)=\sum_{n\geq N}a_nn^{-s}$ for some $N\geq2$, then
    \[
  \|f_{\sigma}\|_{\mathcal{A}^p_{\mu}}\leq N^{-\sigma} \|f\|_{\mathcal{A}^p_{\mu}}
    \]
    for all $\sigma\geq0$.
\end{lema}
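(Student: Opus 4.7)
The plan is to reduce everything to the already established $\mathcal{H}^p$ counterpart, using the definition $\|f\|_{\mathcal{A}^p_\mu}^p = \int_0^\infty \|f_\tau\|_{\mathcal{H}^p}^p \, d\mu(\tau)$, and then to invoke density of Dirichlet polynomials. The statement is semigroup-like in the sense that shifting by $\sigma$ and integrating against $\mu$ commute, so the required bound drops out of an integral over $\tau$ of the pointwise (in $\tau$) $\mathcal{H}^p$ inequality.

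First I would handle the polynomial case. Let $P(s) = \sum_{N \leq n \leq M} a_n n^{-s}$ with $N \geq 2$. By the very definition of the $\mathcal{A}^p_\mu$-norm for polynomials, together with the fact that $(P_\sigma)_\tau = P_{\sigma+\tau} = (P_\tau)_\sigma$, I write
\[
\|P_\sigma\|_{\mathcal{A}^p_\mu}^p = \int_0^\infty \|(P_\tau)_\sigma\|_{\mathcal{H}^p}^p \, d\mu(\tau).
\]
For each fixed $\tau \geq 0$, the translate $P_\tau(s) = \sum_{N \leq n \leq M} (a_n n^{-\tau}) n^{-s}$ is itself a Dirichlet polynomial whose nonzero coefficients are supported on $\{n \geq N\}$, so the $\mathcal{H}^p$ version of the estimate (Theorem C in \cite{brevigcorr}) gives $\|(P_\tau)_\sigma\|_{\mathcal{H}^p} \leq N^{-\sigma}\|P_\tau\|_{\mathcal{H}^p}$. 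Inserting this into the integral and integrating in $\tau$ yields
\[
\|P_\sigma\|_{\mathcal{A}^p_\mu}^p \leq N^{-p\sigma} \int_0^\infty \|P_\tau\|_{\mathcal{H}^p}^p \, d\mu(\tau) = N^{-p\sigma}\|P\|_{\mathcal{A}^p_\mu}^p,
\]
which is the desired inequality for $P$.

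For a general $f \in \mathcal{A}^p_\mu$ with spectrum in $\{n \geq N\}$, I would approximate $f$ in $\mathcal{A}^p_\mu$-norm by polynomials $P_k$ whose spectra also lie in $\{n \geq N\}$ (for example, by truncating the partial sums of $f$ and discarding the finitely many frequencies below $N$, which are zero anyway). Horizontal translation by $\sigma$ is continuous on $\mathcal{A}^p_\mu$ thanks to Lemma \ref{trahor}, so $\|(P_k)_\sigma - f_\sigma\|_{\mathcal{A}^p_\mu} \to 0$, and passing to the limit in the polynomial inequality $\|(P_k)_\sigma\|_{\mathcal{A}^p_\mu} \leq N^{-\sigma}\|P_k\|_{\mathcal{A}^p_\mu}$ gives the result. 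There is no real obstacle here: the only point deserving a moment of care is ensuring the approximating polynomials have their spectra in $\{n \geq N\}$, which is automatic from the Schauder basis property of $(n^{-s})_n$ in $\mathcal{A}^p_\mu$.
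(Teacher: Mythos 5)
Your reduction to the $\mathcal{H}^p$ inequality is exactly the paper's route (the paper derives the lemma directly from its $\mathcal{H}^p$ version, the polynomial case plus a density argument), and your computation for polynomials is correct: $(P_\tau)_\sigma=(P_\sigma)_\tau$, each $P_\tau$ keeps its spectrum in $\{n\geq N\}$, and integrating the pointwise bound against $\mu$ gives the claim. The one genuine weak point is the extension to general $f$ at the endpoint $p=1$: you justify the existence of approximating polynomials with spectrum in $\{n\geq N\}$ by norm convergence of the partial sums $S_k f\to f$, invoking the Schauder basis property of $(n^{-s})_n$ in $\mathcal{A}^p_\mu$. That property is available only for $1<p<\infty$ (the paper itself cites it precisely in a remark restricted to $p>1$), and, as in the $H^1$/$A^1$ situation, norm convergence of partial sums should not be expected when $p=1$; so, as written, the case $p=1$ of the statement is not covered.

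The gap is easy to close in either of two ways. (i) Keep the density argument but build the approximants differently: take arbitrary Dirichlet polynomials $Q_k\to f$ in $\mathcal{A}^p_\mu$ and delete the finitely many frequencies $n<N$, i.e. replace $Q_k$ by $Q_k-\sum_{n<N}a_n(Q_k)n^{-s}$. The coefficient functionals $f\mapsto a_n(f)$ are bounded on $\mathcal{A}^p_\mu$ (for instance, $|a_n(f)|\,n^{-\varepsilon}\leq\|f_\varepsilon\|_{\mathcal{H}^p}\lesssim_{\varepsilon}\|f\|_{\mathcal{A}^p_\mu}$ by Lemma \ref{horiz}), so this modified sequence still converges to $f$ (its low coefficients converge to those of $f$, which vanish) and now has spectrum in $\{n\geq N\}$; then your limiting argument via Lemma \ref{trahor} goes through for all $1\leq p<\infty$. (ii) Avoid density altogether: by Lemma \ref{normApineg} (whose proof is independent of the present lemma), $\|f_\sigma\|_{\mathcal{A}^p_\mu}^p=\int_0^\infty\|f_{\sigma+\delta}\|_{\mathcal{H}^p}^p\,d\mu(\delta)$ for every $f\in\mathcal{A}^p_\mu$, and since each $f_\delta$ lies in $\mathcal{H}^p$ (Lemma \ref{horiz}) with spectrum in $\{n\geq N\}$, the $\mathcal{H}^p$ inequality applied under the integral gives the bound directly for all $f$ and all $p\in[1,\infty)$.
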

 
 In the following result we show that right horizontal translation strongly improves the integrability.
It can be seen as a version in the framework of the Dirichlet series of the fact that in the classical framework of the unit disc, the blow up operator $f\mapsto f(rz)$ maps the classical spaces of analytic functions (Hardy, Bergman,...) into $H^\infty$. 
Here we cannot expect to always reach $\mathcal{H}^\infty$ nevertheless we have:

\begin{lema}\label{horiz}
    Let $1\leq  p,q<\infty$ and $\varepsilon>0$. Then, $T_{\varepsilon}:f\in\mathcal{A}_{\mu}^p\to f(\cdot+\varepsilon)\in\mathcal{H}^q$ is bounded.
\end{lema}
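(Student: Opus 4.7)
My plan is to factor $T_\varepsilon$ through $\mathcal{H}^p$ by splitting $\varepsilon=\eta+\delta$ with $\eta,\delta>0$: first send $\mathcal{A}^p_\mu$ into $\mathcal{H}^p$ using the $\mathcal{A}^p_\mu$ structure, then send $\mathcal{H}^p$ into $\mathcal{H}^q$ using the smoothing effect of positive horizontal translations within the Hardy scale.

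For the first step, the $\mathcal{H}^p$ analogue of Lemma \ref{trahor} (namely \cite[Theorem C]{brevigcorr}) gives that $\sigma\mapsto\|f_\sigma\|_{\mathcal{H}^p}$ is non-increasing on $[0,\infty)$, first for Dirichlet polynomials and then, by density, for every $f\in\mathcal{A}^p_\mu$. Therefore, for any $\eta\in(0,\varepsilon)$,
\begin{equation*}
\mu([0,\eta])\,\|f_\eta\|_{\mathcal{H}^p}^p\leq\int_0^\eta\|f_\sigma\|_{\mathcal{H}^p}^p\,d\mu(\sigma)\leq\|f\|_{\mathcal{A}^p_\mu}^p.
\end{equation*}
Since $0\in\text{supp}(\mu)$ forces $\mu([0,\eta])>0$, this already shows that $T_\eta:\mathcal{A}^p_\mu\to\mathcal{H}^p$ is bounded with norm at most $\mu([0,\eta])^{-1/p}$.

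For the second step, when $q\leq p$ the inclusion $\mathcal{H}^p\hookrightarrow\mathcal{H}^q$ is automatic (Jensen on the polytorus), so that $T_\delta$ lands into $\mathcal{H}^q$ immediately. When $q>p$, I invoke the classical hypercontractive smoothing of positive horizontal translations: for every $\delta>0$ and every $1\leq p,q<\infty$, the operator $T_\delta:\mathcal{H}^p\to\mathcal{H}^q$ is bounded. This is essentially Bayart's hypercontractive estimate (see \cite{bayart1}, and also \cite[Chapter 6]{queffelecs} or \cite[Section 11.3]{defant-peris}), which lifts Weissler's hypercontractivity on the disc to the infinite polytorus through Bohr's correspondence; iterating it finitely many times covers an arbitrary target exponent. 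Combining the two steps with $\varepsilon=\eta+\delta$ yields
\begin{equation*}
\|f_\varepsilon\|_{\mathcal{H}^q}=\|(f_\eta)_\delta\|_{\mathcal{H}^q}\lesssim\|f_\eta\|_{\mathcal{H}^p}\lesssim\|f\|_{\mathcal{A}^p_\mu}.
\end{equation*}

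The main obstacle is really the hypercontractivity invoked in the second step: it is the only non-elementary ingredient, and the rest of the argument is just the monotonicity of the $\mathcal{H}^p$-norm under right translation combined with the admissibility condition $0\in\text{supp}(\mu)$. Note that the splitting $\varepsilon=\eta+\delta$ is flexible (any positive split works), so the resulting norm bound depends on $p,q,\varepsilon$ and on $\mu$ only through a single quantity $\mu([0,\eta])^{-1/p}$ for an arbitrary choice $\eta\in(0,\varepsilon)$.
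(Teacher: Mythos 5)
Your first step is fine and is exactly what the paper does: the monotonicity of $\sigma\mapsto\|f_\sigma\|_{\mathcal{H}^p}$ plus $0\in\mathrm{supp}(\mu)$ gives $\|f_\eta\|_{\mathcal{H}^p}\leq\mu([0,\eta])^{-1/p}\|f\|_{\mathcal{A}^p_\mu}$, and the paper runs this same computation (with $p=1$). The problem is the second step, which you yourself flag as the only non-elementary ingredient. The statement you invoke (``$T_\delta:\mathcal{H}^p\to\mathcal{H}^q$ is bounded for every $\delta>0$ and every $1\leq p,q<\infty$'') is true, but your justification of it is not. Weissler's hypercontractivity, lifted to $\T^\infty$ by tensorization through the Bohr correspondence, requires the dilation radii $p_j^{-\delta}$ to satisfy $p_j^{-\delta}\leq\sqrt{p/q}$, and the binding constraint comes from $j=1$: it gives $T_\delta:\mathcal{H}^p\to\mathcal{H}^q$ only when $q\leq p\,4^{\delta}$. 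Iterating does not enlarge this range: splitting $\delta=\delta_1+\cdots+\delta_k$ and composing yields a total exponent gain $4^{\delta_1}\cdots 4^{\delta_k}=4^{\delta}$, so with a fixed (and in your setup possibly very small) translation budget $\delta<\varepsilon$ you can never reach an arbitrary target exponent $q$. Hence the key smoothing claim is left unproved precisely in the regime the lemma requires ($\varepsilon$ small, $q$ large). (A side remark: \cite{bayart1} in this paper is Bayart--Queff\'elec--Seip; the hypercontractive estimate you have in mind is in \cite{bayart2}.)

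What actually closes this gap — and is the heart of the paper's proof — is an arithmetic input beyond Weissler: Helson's inequality $\sum_n|a_n|^2/d(n)\leq\|f\|_{\mathcal{H}^1}^2$ combined with $d(n)=O(n^{2\delta'})$ for every $\delta'>0$ shows that $T_{\delta'}:\mathcal{H}^1\to\mathcal{H}^2$ is bounded for \emph{arbitrarily small} $\delta'$, with no loss tied to the size of the translation. One then doubles the exponent at each step using the multiplicative identities $(T_{\delta'}h)^2=T_{\delta'}(h^2)$ and $\|h\|_{\mathcal{H}^{2m}}^2=\|h^2\|_{\mathcal{H}^m}$, reaching $\mathcal{H}^{2^k}\supset$ any $\mathcal{H}^q$ after finitely many small translations whose sum can be kept below any prescribed $\varepsilon$. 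If you replace your appeal to hypercontractivity by this Helson-plus-squaring iteration (or by an explicit reference that states the unrestricted translation-smoothing result), your factorization $\mathcal{A}^p_\mu\to\mathcal{H}^1\to\mathcal{H}^q$ becomes a complete proof essentially identical to the paper's.
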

\begin{remark}\label{remarko}
In fact, we are mostly interested in knowing that the norm on the embedding is uniformly bounded when $\varepsilon$ is far from zero. 

From the proof below, we have
\[
\|T_{\varepsilon}f\|_{\mathcal{H}^p}\lesssim
\frac{\|f\|_{\mathcal{A}_{\mu}^p}}{\mu([0,\varepsilon])^{\frac{1}{p}}}\cdot
\]
Since $0\in\text{supp}(\mu)$, the integral is uniformly bounded when $\varepsilon>\varepsilon_0>0$.
\end{remark}

\begin{proof}
We first recall that (see \cite{pascal}) that, given $\varepsilon>0$, there exists $C_1(\varepsilon)>0$ such that
\begin{equation}\label{embAH1}
\forall f\in\mathcal{A}_{\mu}^1\,,\qquad\|T_{\varepsilon}f\|_{\mathcal{H}^1}\leq C_\varepsilon\|f\|_{\mathcal{A}_{\mu}^1}
\end{equation}
where $C_\varepsilon=1/\mu([0,\varepsilon])$.
Indeed, for sake of completeness, we give a quick argument: take any Dirichlet polynomial $f$. 
By definition, 
$$\|f\|_{\mathcal{A}_{\mu}^1}\geq\int_0^\varepsilon \|f_\sigma\|_{\mathcal{H}^1}\,d\mu\geq\int_0^\varepsilon \|f_\varepsilon\|_{\mathcal{H}^1}\,d\mu=\|f_\varepsilon\|_{\mathcal{H}^1}\mu([0,\varepsilon])$$ 
since $\sigma>0\mapsto\|f_\sigma\|_{\mathcal{H}^1}$ is non-increasing.
We can conclude by the density of the Dirichlet polynomials both in the Hardy and Bergman spaces and we get \eqref{embAH1}.

On another hand, we know that $T_\varepsilon$ is bounded from $\mathcal{H}^1$ to $\mathcal{H}^2$. Indeed the Helson inequality expresses that every $f\in\mathcal{H}^1$ satisfies
$$\sum_n\dfrac{|a_n|^2}{d(n)}\leq\|f\|_{\mathcal{H}^1}^2,$$
where $d(n)$ denotes the number of divisors of $n$, and since $d(n)=O\big(n^{2\varepsilon})$ we get
$$\forall f\in\mathcal{H}^1\,,\qquad\|T_{\varepsilon}f\|_{\mathcal{H}^2}\leq k_\varepsilon\|f\|_{\mathcal{H}^1}$$
where $k_\varepsilon>0$.

By induction, for every integer $q\geq1$, we have 
$$
\forall f\in\mathcal{H}^1\,,\qquad\|T_{ q\varepsilon}f\|_{\mathcal{H}^{2^q}}\leq k^{{2-2^{1-q}}}_\varepsilon\|f\|_{\mathcal{H}^1}.
$$
Let us see this. Clearly, the inequality holds for the case $q=1$. Let us assume that such is the case for the case $q$, then, using that $(T_{\varepsilon}(h))^2=T_{\varepsilon}(h^2)$
\begin{align*}
   \|T_{(q+1)\varepsilon}f\|_{\mathcal{H}^{2^{q+1}}} 
   =
   \|T_{q\varepsilon}(T_{\varepsilon}f)\|_{\mathcal{H}^{2^{q+1}}}
   &=
   \|T_{q\varepsilon}((T_{\varepsilon}f)^2)\|_{\mathcal{H}^{2^{q}}}^{1/2}\\
   &\leq (k_{\varepsilon}^{2-2^{1-q}})^{1/2}\|(T_{\varepsilon}f)^2\|_{\mathcal{H}^1}^{1/2}
  \\& =(k_{\varepsilon}^{2-2^{1-q}})^{1/2}\|T_{\varepsilon}f\|_{\mathcal{H}^2}.
\end{align*}
Now, applying the case $q=1$, we obtain
\begin{align*}
   \|T_{(q+1)\varepsilon}f\|_{\mathcal{H}^{2^{q+1}}}
   \leq 
   (k_{\varepsilon}^{2-2^{1-q}})^{1/2}\|T_{\varepsilon}f\|_{\mathcal{H}^2}
  & \leq 
   k_{\varepsilon}(k_{\varepsilon}^{2-2^{1-q}})^{1/2}
   \|f\|_{\mathcal{H}^1}\\
   &=k_{\varepsilon}^{2-2^{-q}}\|f\|_{\mathcal{H}^1}.
\end{align*}
Hence, with $C_{\varepsilon,q}=k^{{2-2^{1-q}}}_{\frac{\varepsilon}{q}}$,
\begin{equation}\label{embHq}
\forall f\in\mathcal{H}^1\,,\qquad\|T_{\varepsilon}f\|_{\mathcal{H}^{2^q}}\leq C_{\varepsilon,q}\|f\|_{\mathcal{H}^1}.
\end{equation}
Gathering, applying \eqref{embAH1} and \eqref{embHq} with $\varepsilon/2$, we obtain
$$\forall f\in\mathcal{A}_{\mu}^1\,,\qquad\|T_{\varepsilon}f\|_{\mathcal{H}^{2^q}}\leq C_{\varepsilon/2}C_{\varepsilon/2,q}\|f\|_{\mathcal{A}_{\mu}^1}$$
and this obviously implies the conclusion of the lemma.\end{proof}

From this we can also state:
\begin{lema}\label{normApineg} 
Let $1\leq p<\infty$. Then, for every $f\in\mathcal{A}_{\mu}^p$, we have
$$\|f\|_{\mathcal{A}_{\mu}^p}^p=\int_0^{\infty}\|f_{\delta}\|_{\mathcal{H}^p}^pd\mu(\delta).$$
\end{lema}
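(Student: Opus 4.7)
The plan is to first establish the formula on the smaller space $\mathcal{H}^p$ via polynomial approximation, and then to lift it to $\mathcal{A}_\mu^p$ by right horizontal translation together with a monotone limit.

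\textbf{Step 1 (warm-up on $\mathcal{H}^p$).} I first claim that
\[
\|g\|_{\mathcal{A}_\mu^p}^p = \int_0^{\infty}\|g_\delta\|_{\mathcal{H}^p}^p\,d\mu(\delta)\qquad\text{for every }g\in\mathcal{H}^p.
\]
Take Dirichlet polynomials $Q_n\to g$ in $\mathcal{H}^p$. Since right translation is a contraction on $\mathcal{H}^p$ (see \cite[Theorem C]{brevigcorr}), for each $\delta\geq 0$ we have $(Q_n)_\delta\to g_\delta$ in $\mathcal{H}^p$, so $\|(Q_n)_\delta\|_{\mathcal{H}^p}^p\to\|g_\delta\|_{\mathcal{H}^p}^p$ pointwise. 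Moreover $\|(Q_n)_\delta\|_{\mathcal{H}^p}^p\leq\|Q_n\|_{\mathcal{H}^p}^p\leq C$ uniformly in $n$ and $\delta$, and since $\mu$ is a probability measure, dominated convergence gives
\[
\int_0^{\infty}\|(Q_n)_\delta\|_{\mathcal{H}^p}^p\,d\mu(\delta)\longrightarrow\int_0^{\infty}\|g_\delta\|_{\mathcal{H}^p}^p\,d\mu(\delta).
\]
By definition of the $\mathcal{A}_\mu^p$-norm on polynomials, the left-hand side equals $\|Q_n\|_{\mathcal{A}_\mu^p}^p$. On the other hand, $\|\cdot\|_{\mathcal{A}_\mu^p}\leq \|\cdot\|_{\mathcal{H}^p}$ on Dirichlet polynomials (hence on all of $\mathcal{H}^p$ by density), so $\|Q_n\|_{\mathcal{A}_\mu^p}^p\to\|g\|_{\mathcal{A}_\mu^p}^p$, and the claim follows.

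\textbf{Step 2 (horizontal translate).} Let $f\in\mathcal{A}_\mu^p$. For every $\eta>0$, Lemma \ref{horiz} guarantees $f_\eta\in\mathcal{H}^p$, so Step~1 applies to $g=f_\eta$ and yields
\[
\|f_\eta\|_{\mathcal{A}_\mu^p}^p \;=\; \int_0^{\infty}\|(f_\eta)_\delta\|_{\mathcal{H}^p}^p\,d\mu(\delta) \;=\; \int_0^{\infty}\|f_{\eta+\delta}\|_{\mathcal{H}^p}^p\,d\mu(\delta).
\]

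\textbf{Step 3 (letting $\eta\to 0^+$).} On the left, $\|f_\eta\|_{\mathcal{A}_\mu^p}^p\to\|f\|_{\mathcal{A}_\mu^p}^p$ by \cite[Theorem 6]{pascal}. On the right, fix $\delta>0$: since $f_{\delta/2}\in\mathcal{H}^p$ and right translation is an $\mathcal{H}^p$-contraction, the map $s\mapsto\|f_s\|_{\mathcal{H}^p}$ is non-increasing on $(0,\infty)$. Therefore, as $\eta\downarrow 0$, $\|f_{\eta+\delta}\|_{\mathcal{H}^p}^p$ increases to $\|f_\delta\|_{\mathcal{H}^p}^p$, and the monotone convergence theorem gives
\[
\int_0^{\infty}\|f_{\eta+\delta}\|_{\mathcal{H}^p}^p\,d\mu(\delta)\longrightarrow\int_0^{\infty}\|f_\delta\|_{\mathcal{H}^p}^p\,d\mu(\delta),
\]
which together with the previous display yields the announced identity.

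The only real delicate point is the exchange of limit and integral; the device of passing through $f_\eta\in\mathcal{H}^p$ (where Step 1 applies cleanly) and then using the monotonicity of $s\mapsto\|f_s\|_{\mathcal{H}^p}$ to invoke monotone convergence is what makes this swap painless. A naive attempt starting from polynomials $P_n\to f$ in $\mathcal{A}_\mu^p$ would only produce one inequality via Fatou's lemma, because one lacks an integrable dominating function for $\|(P_n)_\delta\|_{\mathcal{H}^p}^p$ near $\delta=0$.
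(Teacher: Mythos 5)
Your proof is correct and follows essentially the same route as the paper: translate so that Lemma \ref{horiz} places $f_\eta$ in $\mathcal{H}^p$, use the integral formula for the $\mathcal{A}_{\mu}^p$-norm of an $\mathcal{H}^p$-function (which the paper takes as known and you justify by polynomial density and dominated convergence), and then let $\eta\to0^+$ using \cite[Theorem 6]{pascal} on the left and the monotonicity of $\sigma\mapsto\|f_{\sigma}\|_{\mathcal{H}^p}$ plus monotone convergence on the right. The paper merely organises the same ingredients as two separate inequalities, so the content is identical.
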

\begin{proof}
Let $f\in\mathcal{A}^p_{\mu}$. From Lemma \ref{horiz}, for every $c>0$, $f_c\in\mathcal{H}^p$ so, thanks to the $\mathcal{H}^p$ version of Lemma \ref{trahor},
$$\|f_c\|_{\mathcal{A}_{\mu}^p}^p=\int_0^{\infty}\|f_{c+\delta}\|_{\mathcal{H}^p}^pd\mu(\delta)\leq\int_0^{\infty}\|f_{\delta}\|_{\mathcal{H}^p}^pd\mu(\delta).$$
Taking the limit when $c\to0^+$, by \cite[Theorem 6]{pascal}, we get one inequality.

\ 
For the converse one, take $c=1/m$, $m\in\N$. Now, first using once more the $\mathcal{H}^p$ version of Lemma \ref{trahor}, and then applying the Monotone Convergence Theorem to $g_m(\cdot)=\|f_{1/m+\cdot}\|_{\mathcal{H}^p}^p$, we obtain 
\begin{align*}
    \|f\|_{\mathcal{A}^p_{\mu}}^p
    \geq \|f_{\frac1m}\|_{\mathcal{A}^p_{\mu}}^p
    =
\int_0^{\infty}\|f_{\frac1m+\delta}\|_{\mathcal{H}^p}^pd\mu(\delta)\to   \int_0^{\infty}\|f_{\delta}\|_{\mathcal{H}^p}^pd\mu(\delta),\quad m\to\infty. 
\end{align*}
\end{proof}
\subsection{Auxiliary results: $\mathcal{H}^p$ tools}
In order to ease the reading of the proof of the main results in Section \ref{sec:volterraonberg}, we find convenient to devote this subsection to recall all those results related to the spaces $\mathcal{H}^p$ which will be needed there.

\ 

The following lemma, due to Bayart, is a well-known result in the theory of $\mathcal{H}^p$-spaces.
\begin{lema}{\emph{(\cite[Lemma 5]{bayart2})}}\label{hp}
    Let $\mu$ be a finite Borel measure on $\R$. Then,
    \begin{equation*}
        \|f\|_{\mathcal{H}^p}^p \mu(\R)=\int_{\T^{\infty}}|f_{\chi}(it)|^pd\mu(t)dm_{\infty}(\chi).
    \end{equation*}
\end{lema}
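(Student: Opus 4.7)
The plan is to exploit the fact that vertical translations of Dirichlet series correspond to rotations of the infinite polytorus, and such rotations preserve the Haar measure $m_{\infty}$. Concretely, for each fixed $t\in\R$, the element $\eta_t=(p_j^{-it})_{j\geq1}\in\T^{\infty}$ defines a character, and the map $\Phi_t:\chi\mapsto \chi\cdot\eta_t$ is a rotation of $\T^{\infty}$, hence $m_{\infty}$-preserving.

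First, I would establish the identity for a Dirichlet polynomial $f(s)=\sum_{n=1}^{N}a_n n^{-s}$. Writing its Bohr lift as $\mathcal{B}f(z)=\sum a_n z_1^{\alpha_1(n)}\cdots z_r^{\alpha_r(n)}$, one has pointwise, for every $(\chi,t)\in\T^{\infty}\times\R$,
\[
f_{\chi}(it)=\sum_{n=1}^{N}a_n\chi(n)n^{-it}=\mathcal{B}f\bigl(\chi(p_1)p_1^{-it},\chi(p_2)p_2^{-it},\ldots\bigr)=\mathcal{B}f\bigl(\Phi_t(\chi)\bigr).
\]
Since $\Phi_t$ preserves $m_{\infty}$, for each fixed $t$,
\[
\int_{\T^{\infty}}|f_{\chi}(it)|^p\,dm_{\infty}(\chi)=\int_{\T^{\infty}}|\mathcal{B}f(\chi')|^p\,dm_{\infty}(\chi')=\|f\|_{\mathcal{H}^p}^p.
\]
Since $\mu$ is a finite Borel measure and the integrand is uniformly bounded (polynomial case), Fubini applies and integration against $\mu$ yields the identity $\mu(\R)\|f\|_{\mathcal{H}^p}^p=\int_{\T^{\infty}\times\R}|f_{\chi}(it)|^p\,d\mu(t)\,dm_{\infty}(\chi)$.

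The second step is the extension to a general $f\in\mathcal{H}^p$ by density of Dirichlet polynomials. Given such an $f$, take a sequence of polynomials $P_k$ with $\|f-P_k\|_{\mathcal{H}^p}\to 0$. For almost every $\chi\in\T^{\infty}$, the Dirichlet series $f_{\chi}$ has a Bohr lift with boundary values $f_{\chi}^{\ast}(it)$ on $i\R$ (which is how we interpret $f_{\chi}(it)$), and by the polynomial identity applied to $P_k-P_\ell$, the sequence $(\chi,t)\mapsto (P_k)_{\chi}(it)$ is Cauchy in $L^p(\T^{\infty}\times\R,m_{\infty}\otimes\mu)$ with limit $(\chi,t)\mapsto f_{\chi}(it)$. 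Passing to the limit in the polynomial identity gives the claim.

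The only delicate point is interpreting $f_{\chi}(it)$ for a general $f\in\mathcal{H}^p$: for almost every $\chi$ the translated series $f_{\chi}$ admits boundary values on $i\R$ and this is what appears in the right-hand side, but this is precisely the content of the standard Fatou-type theorem in the $\mathcal{H}^p$ setting, after which the density argument is routine.
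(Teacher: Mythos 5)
Your argument is correct and is essentially the standard one: the paper itself does not prove this lemma but quotes it from \cite{bayart2}, whose proof rests on exactly the two ingredients you use — invariance of the Haar measure $m_{\infty}$ under the rotation $\chi\mapsto\chi\cdot(p_j^{-it})_{j\geq1}$, which settles the polynomial case for each fixed $t$, followed by Fubini and a density argument for general $f\in\mathcal{H}^p$. The one point you leave implicit, identifying the $L^p(m_{\infty}\otimes\mu)$-limit of $(P_k)_{\chi}(it)$ with the boundary values $f_{\chi}(it)$, is precisely how the generalized boundary values are constructed in that reference, so your appeal to the Fatou-type theory there is legitimate.
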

The next Theorem \ref{breper} and Theorem \ref{breper1} were established in \cite{brevigcorr} using some classical results from $H^p$-spaces theory such as \cite{feff} and \cite{BGS}.  It is also worth mentioning that the same results were proven in \cite{chen} using different arguments.

\ 
For $\tau\in\R$, we define the cone
\[
\Gamma_{\tau}=\{\sigma+it\in\C_+:|t-\tau|<\sigma\}.
\]
Then, for any analytic function $f$ in $\C_+$, we define its non-tangential maximal function as
\begin{equation}
    f^*(\tau):=\sup_{s\in\Gamma_{\tau}}|f(s)|.
\end{equation}

We can use this maximal function to provide a lower estimate of the $\mathcal{H}^p$-norm.
\begin{teor}\label{breper}\emph{(\cite[Theorem A]{brevigcorr})}
   Fix $0<p<\infty$. There is a constant $C_p\geq1$ such that
   \begin{equation*}
       \int_{\T^{\infty}}\int_{\R}|f^*_{\chi}(\tau)|^p\frac{d\tau}{\pi(1+\tau^2)}dm_{\infty}(\chi)\leq C_p\|f\|_{\mathcal{H}^p}^p
   \end{equation*}
   for every $f\in\mathcal{H}^p$.
\end{teor}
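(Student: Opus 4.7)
My approach would combine, on the one hand, Bayart's identity (Lemma \ref{hp}) applied with an astute choice of measure on $\R$, and on the other hand, the classical non-tangential maximal function inequality in $H^p$ of the upper half-plane applied slicewise in the character variable $\chi$.

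The key observation I would exploit is that $d\mu(\tau):=\frac{d\tau}{\pi(1+\tau^2)}$ is a probability measure on $\R$: up to normalization, it is the push-forward of Lebesgue measure on $\T$ under the Cayley-type conformal map $\phi\colon\D\to\C_+$, $\phi(w)=\frac{1+w}{1-w}$, which sends $0$ to $1$ and the unit circle to $i\R\cup\{\infty\}$. Through pull-back by $\phi$, the non-tangential approach regions $\Gamma_{\tau}$ in $\C_+$ correspond, up to comparable aperture, to the standard Stolz cones in $\D$, so the classical Hardy--Littlewood maximal theorem for $H^p(\D)$ (equivalently, Burkholder--Gundy--Silverstein) transfers to
\[
\int_{\R}|F^{*}(\tau)|^p\,\frac{d\tau}{\pi(1+\tau^2)}\leq C_p\int_{\R}|F(i\tau)|^p\,\frac{d\tau}{\pi(1+\tau^2)},
\]
valid for every $F$ analytic on $\C_+$ with $F\circ\phi\in H^p(\D)$, where $C_p$ depends only on $p$.

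With this in hand, the proof would proceed in three stages. First, for a Dirichlet polynomial $f$, every vertical limit $f_{\chi}$ is entire and bounded on $\overline{\C_+}$, hence $f_{\chi}\circ\phi\in H^{\infty}(\D)\subset H^p(\D)$, and the displayed inequality applies to $F=f_{\chi}$ with a constant $C_p$ independent of $\chi$. Integrating against $dm_{\infty}$ over $\T^{\infty}$ and invoking Lemma \ref{hp} on the right with $d\mu(\tau)=\frac{d\tau}{\pi(1+\tau^2)}$ (a probability measure) yields
\[
\int_{\T^{\infty}}\int_{\R}|f_{\chi}^{*}(\tau)|^p\,\frac{d\tau}{\pi(1+\tau^2)}\,dm_{\infty}(\chi)\leq C_p\|f\|_{\mathcal{H}^p}^p.
\]
Second, for general $f\in\mathcal{H}^p$, I would approximate $f$ by Dirichlet polynomials $P_n\to f$ in $\mathcal{H}^p$; along a subsequence, $(P_n)_{\chi}\to f_{\chi}$ locally uniformly on $\C_+$ for $m_{\infty}$-almost every $\chi$, so $f_{\chi}^{*}(\tau)\leq\liminf_n (P_n)_{\chi}^{*}(\tau)$, and Fatou's lemma on the left combined with the $\mathcal{H}^p$ convergence on the right closes the argument.

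The main obstacle I anticipate is making the conformal-transfer step precise: ensuring that the non-tangential regions $\Gamma_{\tau}$ of the statement pull back under $\phi$ to Stolz regions with comparable aperture, so that the classical maximal inequality produces a single constant $C_p$ independent of both $\chi$ and $\tau$. A secondary subtlety is the local uniform convergence of vertical limits for polynomial approximants, which must be established for $m_{\infty}$-a.e.\ $\chi$, typically by passing to a subsequence and exploiting that $\mathcal{H}^p$ norm convergence controls convergence on half-planes $\C_{\varepsilon}$ for every $\varepsilon>0$.
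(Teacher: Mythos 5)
A preliminary remark on the comparison itself: the paper does not prove this statement — it is imported verbatim from \cite[Theorem A]{brevigcorr}, with the text only noting that it was obtained there from classical $H^p$ results (Fefferman--Stein, Burkholder--Gundy--Silverstein). Your plan (a slicewise classical maximal inequality transported by the Cayley map, Bayart's Lemma \ref{hp} with the probability measure $d\tau/\pi(1+\tau^2)$, then polynomial approximation plus Fatou) is in the same spirit as that source, so the only real question is whether your conformal-transfer step is sound.

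It is not, as stated, and this is the genuine gap rather than the bookkeeping issue you flag. The cones $\Gamma_\tau=\{\sigma+it:\ |t-\tau|<\sigma\}$ are unbounded, and their pullbacks under $\phi$ are \emph{not} comparable to Stolz cones: the closure of $\phi^{-1}(\Gamma_\tau)$ meets $\partial\D$ at two points, $\phi^{-1}(i\tau)$ and $w=1$ (the preimage of $\infty$), whereas a Stolz cone at $\phi^{-1}(i\tau)$ stays away from $w=1$ and its image under $\phi$ is a bounded approach region. Consequently the intermediate inequality you assert "for every $F$ analytic on $\C_+$ with $F\circ\phi\in H^p(\D)$" is false: take $F(s)=\log(1+s)$ (or $F(s)=(1+s)^{1/(2p)}$). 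Then $F\circ\phi(w)=\log\frac{2}{1-w}\in H^p(\D)$ for every $p$ and $\int_{\R}|F(i\tau)|^p\frac{d\tau}{\pi(1+\tau^2)}<\infty$, yet $F^*(\tau)=\sup_{\Gamma_\tau}|F|=+\infty$ for every $\tau$, since $|F|\to\infty$ along each (infinite) cone. So no choice of apertures rescues the transfer for the full cones; the classical maximal theorem only controls suprema over pullbacks of Stolz cones, i.e.\ over suitably truncated cones, and the theorem genuinely uses that $f_\chi$ is a vertical limit of a Dirichlet series. The repair is to split each cone at a fixed abscissa, say $\Re s=2$: on $\Gamma_\tau\cap\{\Re s<2\}$ one checks $|s-i\tau|\,|s+1|\lesssim\sigma\,|1+i\tau|$, which places the pullback inside Stolz regions of aperture uniform in $\tau$, so your argument applies there; on the far part $\{\Re s\ge2\}$ one bounds the supremum directly from the series, e.g.\ for $p\ge1$ by $|a_1|+\sum_{n\ge2}|a_n|n^{-2}\lesssim\|f\|_{\mathcal{H}^p}$ uniformly in $\chi$ (coefficient functionals being contractive on $\mathcal{H}^1$), with an extra idea needed for $p<1$. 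Your second stage (subsequential a.e.\ locally uniform convergence of $(P_n)_\chi$ and Fatou) is fine, but without the cone-splitting the first stage is incomplete even for Dirichlet polynomials, because the constant must depend only on $p$.
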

The $\mathcal{H}^p$-norm can also be written in terms of the following square function.
\begin{defi}\label{sqmax}
Let $f$ be a holomorphic function in $\C_+$. Then, its square function is given by
\[
Sf(\tau)=
\left(
\int_{\Gamma_{\tau}}|f'(\sigma+it)|^2d\sigma dt
\right)^{1/2},\quad\tau\in\R.
\]
\end{defi}
\begin{teor}\label{breper1}\emph{(\cite[Theorem B]{brevigcorr})}
 Fix $0<p<\infty$.  There exist constants $A_p>0$ and $B_p>0$ such that
   \begin{equation}\label{squareform}
       A_p\|f\|_{\mathcal{H}^p}^p\leq
       \int_{\T^{\infty}}\int_{\R}(Sf_{\chi}(\tau))^p\frac{d\tau}{\pi(1+\tau^2)}dm_{\infty}(\chi)
       \leq B_p\|f\|_{\mathcal{H}^p}^p
   \end{equation}
   for every $f\in\mathcal{H}^p$ vanishing at infinity.
\end{teor}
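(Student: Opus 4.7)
\textbf{Proof plan for Theorem \ref{breper1}.} The strategy is to reduce the two-sided square-function estimate on $\mathcal{H}^p$ to its classical half-plane analogue via a Bayart-type identity with the Poisson probability measure on the real line, then apply the classical Littlewood--Paley theorem character by character and integrate over $\T^\infty$.

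First, I would choose $d\nu(\tau)=\frac{d\tau}{\pi(1+\tau^2)}$, which is a Borel probability measure on $\R$. Lemma \ref{hp} then yields the identity
\[
\|f\|_{\mathcal{H}^p}^p=\int_{\T^\infty}\int_{\R}|f_\chi(i\tau)|^p\,d\nu(\tau)\,dm_\infty(\chi).
\]
Hence it suffices to prove that, for almost every character $\chi$, one has the two-sided equivalence
\[
\int_\R|f_\chi(i\tau)|^p\,d\nu(\tau)\approx \int_\R \big(Sf_\chi(\tau)\big)^p\,d\nu(\tau),
\]
with constants independent of $\chi$. For this, I would work on the dense subspace of Dirichlet polynomials $f$ with $a_1=0$, so that $f_\chi$ extends analytically to $\C_+$ and vanishes at infinity. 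The Cayley transform $w\mapsto i(1-w)/(1+w)$ conformally identifies $\D$ with $\C_+$ and pushes the normalized Lebesgue measure on $\T$ forward to $d\nu$ on $i\R$, so that $f_\chi$ becomes an $H^p(\D)$ function. On $\D$, the classical Littlewood--Paley theorem --- in the form of the Burkholder--Gundy--Silverstein equivalence between the $L^p$-norm of $f_\chi$ and of its non-tangential maximal function, combined with Fefferman's equivalence between maximal and area functions --- yields the desired two-sided bound. Integrating against $dm_\infty(\chi)$ and invoking Fubini produces \eqref{squareform} for polynomials; a standard density argument extends it to all of $\mathcal{H}^p$, using that Dirichlet polynomials are dense and that a Fatou-type passage is available for both sides (both the non-tangential square function and the integrand on the left are lower semicontinuous with respect to pointwise convergence of $f_\chi$ and $f_\chi'$ on $\C_+$).

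The main obstacle is the transfer through the Cayley map: the Luzin cones $\Gamma_\tau\subset\C_+$ used to define $Sf_\chi$ in Definition \ref{sqmax} do not pull back exactly onto the classical non-tangential cones in $\D$, and the planar area element transforms with a conformal Jacobian. One must check that the pulled-back cones are comparable to standard ones, uniformly in $\tau$, so that the square functions are pointwise equivalent up to absolute constants; equivalently, one may work entirely on $\C_+$ and invoke the half-plane form of the Littlewood--Paley theorem with Poisson weight (as in Garnett's book). A second, minor issue is that the vanishing at infinity hypothesis is essential: it guarantees that $f_\chi$ has no mass at the point in $\T$ corresponding to $\infty\in\partial\C_+$, which is needed both to identify $f_\chi$ with a bona fide $H^p(\D)$ function and to ensure the boundary integral representation of the $H^p$-norm used in Lemma \ref{hp} is meaningful.
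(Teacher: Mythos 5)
A preliminary remark: the paper offers no proof of this statement — it is imported verbatim as \cite[Theorem B]{brevigcorr}, with the text only noting that the original argument rests on the classical results of Fefferman--Stein \cite{feff} and Burkholder--Gundy--Silverstein \cite{BGS} (and that \cite{chen} gives a different proof). Your architecture — vertical limits, Lemma \ref{hp} with the Poisson probability measure $d\nu(\tau)=\frac{d\tau}{\pi(1+\tau^2)}$, transfer to the disc by the Cayley map, classical square-function theory character by character, then integration in $\chi$ and a density/sublinearity passage — is indeed the route of the cited source, so the overall plan is reasonable.

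There is, however, a genuine gap at the pivotal reduction. You claim that it suffices to prove, for a.e.\ $\chi$, the two-sided equivalence
\[
\int_\R|f_\chi(i\tau)|^p\,d\nu(\tau)\;\approx\;\int_\R\bigl(Sf_\chi(\tau)\bigr)^p\,d\nu(\tau),
\]
with constants independent of $\chi$. The lower-bound half of this is not what the classical theorems give: the square function annihilates constants, so the disc version of the area theorem produces an extra term $|g(0)|^p$, which under your Cayley identification becomes $|f_\chi(1)|^p$ — the value at the \emph{interior} point of $\C_+$ corresponding to the centre of $\D$. The hypothesis that $f$ vanishes at infinity does not kill this term character by character, since $+\infty$ corresponds to a \emph{boundary} point of $\D$, not to its centre; so the claimed $\chi$-wise equivalence is unjustified as stated. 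The correct mechanism is to retain $|f_\chi(1)|^p$, integrate in $\chi$ so that it becomes $\|f_1\|_{\mathcal{H}^p}^p$ (Lemma \ref{hp} with a Dirac mass), and only then use $a_1=0$ — via the decay of translates, $\|f_\sigma\|_{\mathcal{H}^p}\leq 2^{-\sigma}\|f\|_{\mathcal{H}^p}$ (the $\mathcal{H}^p$ version of Lemma \ref{prinul}), or a pointwise estimate of $f_\chi(1)$ by the square function exploiting the exponential decay of $f'_\chi$ on $[1,\infty)$ — to control this term and obtain the left inequality in \eqref{squareform}. This is precisely where the vanishing-at-infinity hypothesis enters (the left inequality is simply false for $f\equiv1$: the right-hand side vanishes). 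Your stated reason for that hypothesis is therefore off the mark: Lemma \ref{hp} and the a.e.\ membership of the vertical limits $f_\chi$ in the conformally invariant $H^p$ of the half-plane hold for \emph{every} $f\in\mathcal{H}^p$, with or without $a_1=0$. The remaining points you flag (comparability of the cones under the Cayley map; the limiting step, where for the lower bound you should apply the already-proved upper bound to $f-f_n$ together with the pointwise sublinearity $S f\le S(f-f_n)+S f_n$, rather than a bare Fatou argument) are standard, but the evaluation-term issue is a real missing idea in the proposal.
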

The following auxiliary lemma will also be useful in the proof of the sufficient condition.
\begin{lema}{\emph{(\cite[Lemma 5.2]{brevig})}}\label{lematecnraro}
Let $\varphi$ be a function and $\mu$ a positive measure on the vertical strip $\mathbb{S}=\{s:0<\emph{Re}(s)<1\}$. Then,
\begin{equation}\label{lemeq1}
\int_{\R}\int_0^1|\varphi(\sigma+it)|d\mu(\sigma,t)\approx \int_{\R}\int_{\mathbb{S}\cap\Gamma_{\tau}}|\varphi(\sigma+it)|\frac{1+t^2}{\sigma}d\mu(\sigma,t)\frac{d\tau}{1+\tau^2}\cdot
\end{equation}
\end{lema}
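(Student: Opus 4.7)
The plan is to prove the equivalence by a direct Fubini-Tonelli argument: since all integrands are nonnegative, we can freely exchange the order of integration in the right-hand side of \eqref{lemeq1}, and the whole matter reduces to evaluating an elementary one-dimensional integral in $\tau$. Writing $\Gamma_\tau=\{(\sigma,t):\,0<\sigma,\,|t-\tau|<\sigma\}$, the condition $s=\sigma+it\in\mathbb{S}\cap\Gamma_\tau$ amounts to $0<\sigma<1$ and $\tau\in(t-\sigma,t+\sigma)$. Hence exchanging the order of integration gives
\begin{equation*}
\int_{\R}\int_{\mathbb{S}\cap\Gamma_{\tau}}|\varphi(\sigma+it)|\frac{1+t^2}{\sigma}\,d\mu(\sigma,t)\,\frac{d\tau}{1+\tau^2}
=\int_{\R}\!\int_0^1 |\varphi(\sigma+it)|\,\frac{1+t^2}{\sigma}\,I(\sigma,t)\,d\mu(\sigma,t),
\end{equation*}
where $I(\sigma,t):=\displaystyle\int_{t-\sigma}^{t+\sigma}\frac{d\tau}{1+\tau^2}$.

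The core step is therefore to show that, uniformly in $0<\sigma\leq1$ and $t\in\R$,
\begin{equation*}
I(\sigma,t)\approx \frac{\sigma}{1+t^2}.
\end{equation*}
This is a routine two-case check. If $|t|\leq 2$, then for $\tau\in(t-\sigma,t+\sigma)\subset(-3,3)$ one has $1\leq 1+\tau^2\leq 10$, so $I(\sigma,t)\approx \sigma\approx \sigma/(1+t^2)$. If $|t|>2$, then for every $\tau\in(t-\sigma,t+\sigma)$ one has $|\tau|\geq|t|-\sigma\geq|t|/2$, hence $1+\tau^2\approx 1+t^2$, and consequently $I(\sigma,t)\approx 2\sigma/(1+t^2)$. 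Combining both cases yields the claim.

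Inserting this equivalence above, the weight $\frac{1+t^2}{\sigma}\,I(\sigma,t)$ is comparable to $1$, and so the right-hand side of \eqref{lemeq1} is equivalent to
\begin{equation*}
\int_{\R}\!\int_0^1 |\varphi(\sigma+it)|\,d\mu(\sigma,t),
\end{equation*}
which is precisely the left-hand side. The only (minor) obstacle is the two-case comparison of $I(\sigma,t)$ with $\sigma/(1+t^2)$; once that is in hand, the rest is just Fubini and cancellation of weights.
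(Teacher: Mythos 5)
Your proof is correct: the Tonelli interchange (legitimate since everything is nonnegative) together with the uniform two-case estimate $\int_{t-\sigma}^{t+\sigma}\frac{d\tau}{1+\tau^2}\approx\frac{\sigma}{1+t^2}$ for $0<\sigma\leq1$, $t\in\R$, is exactly what the statement requires. The paper itself gives no proof, quoting the lemma from \cite{brevig}, and your argument is essentially the same standard Fubini computation used in that reference.
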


\section{Littlewood-Paley-type estimates}\label{section:LP}
When proving the boundedness of the Volterra operator in Banach spaces of Dirichlet series, having an integral expression for the norm involving the derivative turns out to be quite useful. 
This is one of the interests behind seeking Littlewood-Paley identities in this setting. 
The  Littlewood-Paley identity for the $\mathcal{A}_{\mu_h}^p$ was first established in \cite[Theorem 7]{pascal} for the case $p=2$. The proof for the range $p\in[1,\infty)$ was given in \cite[Lemma 4.5]{wang}. 
The proof is based on the Littlewood-Paley identity for $\mathcal{H}^p$ (see 
\cite[Theorem 5.1]{bayart1}). 
Let us recall this identity in $\mathcal{H}^p$:

\begin{prop}\label{LPHp}
    Let $p\geq1$ and let $\eta$ be a probability measure on $\R$. Then, for every $f\in\mathcal{H}^p$ we have that
    \begin{equation*}
        \|f\|_{\mathcal{H}^p}^p
        \approx
|f(+\infty)|^p+\int_{\T^{\infty}}\int_{\R}\int_0^{\infty}|f_{\chi}(\sigma+it)|^{p-2}|f'_{\chi}(\sigma+it)|^2\sigma d\sigma d\eta(t)dm_{\infty}(\chi)
    \end{equation*}
\end{prop}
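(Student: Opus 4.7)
The plan is to prove this Littlewood--Paley-type identity in three stages: reduction to Dirichlet polynomials, elimination of the $\eta$-dependence via character invariance on $\T^\infty$, and a one-variable ODE-type argument applied to the averaged norm $\sigma\mapsto\|f_\sigma\|_{\mathcal{H}^p}^p$. By density of Dirichlet polynomials in $\mathcal{H}^p$ I first assume $f$ is a Dirichlet polynomial, so that $f$ extends to an entire function, $f(+\infty)=a_1$ and every integral converges absolutely.

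The second step is to observe that the right-hand side is actually independent of the probability measure $\eta$. Vertical translation $s\mapsto s+it$ of a Dirichlet series corresponds on $\T^\infty$ to multiplication by the character $\chi_t(n)=n^{-it}$, that is $f_\chi(s+it)=f_{\chi\chi_t}(s)$ and similarly for $f'$. The translation invariance of the Haar measure $m_\infty$ then yields, for every $\sigma>0$ and $t\in\R$,
\[
\int_{\T^\infty}|f_\chi(\sigma+it)|^{p-2}|f'_\chi(\sigma+it)|^2\,dm_\infty(\chi)=\int_{\T^\infty}|f_\chi(\sigma)|^{p-2}|f'_\chi(\sigma)|^2\,dm_\infty(\chi).
\]
By Fubini and $\eta(\R)=1$ the right-hand side of the statement collapses to
\[
|f(+\infty)|^p+\int_{\T^\infty}\int_0^\infty|f_\chi(\sigma)|^{p-2}|f'_\chi(\sigma)|^2\,\sigma\,d\sigma\,dm_\infty(\chi).
\]
Since $\|f\|_{\mathcal{H}^p}^p$ is also $\eta$-independent (via Lemma \ref{hp}), it suffices to prove this $\eta$-free identity.

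Set $U(\sigma):=\int_{\T^\infty}|f_\chi(\sigma)|^p\,dm_\infty(\chi)=\|f_\sigma\|_{\mathcal{H}^p}^p$, a function of the real variable $\sigma\geq 0$. For $V_\chi(\sigma,t):=|f_\chi(\sigma+it)|^p$, the holomorphicity of $f_\chi$ yields the Laplacian identity $\Delta V_\chi=p^2|f_\chi|^{p-2}|f'_\chi|^2$ (pointwise away from the zeros of $f_\chi$, distributionally elsewhere). The invariance of the previous paragraph shows that $\int_{\T^\infty}V_\chi(\sigma,t)\,dm_\infty(\chi)=U(\sigma)$ does not depend on $t$, hence $\int_{\T^\infty}\partial_t^2 V_\chi\,dm_\infty(\chi)=0$ and therefore
\[
U''(\sigma)=\int_{\T^\infty}\Delta V_\chi(\sigma,0)\,dm_\infty(\chi)=p^2\int_{\T^\infty}|f_\chi(\sigma)|^{p-2}|f'_\chi(\sigma)|^2\,dm_\infty(\chi).
\]
Two integrations by parts in $\sigma$ then give $\int_0^\infty\sigma U''(\sigma)\,d\sigma=U(0)-U(\infty)$: the boundary contribution vanishes at $\sigma=0$ trivially and at $\sigma=+\infty$ because $f_\sigma\to a_1$ exponentially in every $\mathcal{H}^p$, so that $\sigma U'(\sigma)\to 0$. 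Combined with the formula for $U''$, this gives the $\eta$-free identity with explicit constant $p^2$, a fortiori the stated equivalence $\approx$.

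The main obstacle is justifying the Laplacian identity and the integration by parts when $1\leq p<2$, where $|f_\chi|^{p-2}$ is singular at zeros of $f_\chi$. I would address this through the regularization $V_\chi^\varepsilon=(|f_\chi|^2+\varepsilon)^{p/2}$, for which every manipulation above is elementary, and pass to the limit $\varepsilon\to 0^+$ by dominated convergence: the zero set of $f_\chi$ is discrete in $\C_+$ and the limiting integrand $|f_\chi|^{p-2}|f'_\chi|^2$ is locally integrable in $dA$ (the local singularity at a zero of order $k$ is of type $r^{kp-2}$, integrable in $\R^2$ as soon as $p>0$), so no $\delta$-contributions appear in the limit.
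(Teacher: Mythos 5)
The first thing to note is that the paper does not prove Proposition \ref{LPHp} at all: it is recalled from the literature with a pointer to \cite[Theorem 5.1]{bayart1}, and only its Bergman-space consequence (Proposition \ref{LP}) is proved in the text. Your argument is essentially the known proof of the cited result, and its core is sound: the rotation invariance $f_\chi(\sigma+it)=f_{\chi\chi_t}(\sigma)$ removing the $\eta$-dependence, the identity $\Delta|f_\chi|^p=p^2|f_\chi|^{p-2}|f_\chi'|^2$, the resulting formula $U''(\sigma)=p^2\int_{\T^\infty}|f_\chi(\sigma)|^{p-2}|f_\chi'(\sigma)|^2\,dm_\infty(\chi)$ for $U(\sigma)=\|f_\sigma\|_{\mathcal{H}^p}^p$, and the double integration by parts giving the exact identity with constant $p^2$, which together with $|f(+\infty)|\le\|f\|_{\mathcal{H}^p}$ yields the stated two-sided comparison. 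Your regularization $(|f_\chi|^2+\varepsilon)^{p/2}$ is the right device for $1\le p<2$, and the boundary terms are unproblematic (convexity and monotonicity of $U$ alone give $\sigma U'(\sigma)\to0$ at infinity).

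The genuine gap is your very first step: ``by density \dots I first assume $f$ is a Dirichlet polynomial'', after which the general $f\in\mathcal{H}^p$ is never revisited. This is exactly the delicate point, and the paper warns about it explicitly just before Proposition \ref{LP} (``it is not that clear how to argue by density once we have the inequalities for Dirichlet polynomials''). Two things are missing: (i) for general $f\in\mathcal{H}^p$ the right-hand side involves $f_\chi$ on all of $\C_+$, so one must first invoke the a.e.\ existence of the vertical limit functions $f_\chi$ on $\C_+$ for the statement even to make sense; (ii) if $P_N\to f$ in $\mathcal{H}^p$, Fatou only transfers one inequality (the triple integral is $\lesssim\|f\|_{\mathcal{H}^p}^p$); the reverse inequality does not pass to the limit, because the triple integral is not continuous under $\mathcal{H}^p$-convergence. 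A standard repair, and the one the paper itself uses for Proposition \ref{LP}, is to run your argument directly on the translates $f_\delta$, $\delta>0$ (whose improved regularity, cf.\ Lemma \ref{horiz}, justifies the differentiations under the integral sign and the integration by parts), and then let $\delta\to0^+$: after the change of variables the weight becomes $(\sigma-\delta)^+$, which increases to $\sigma$, so monotone convergence handles the integral, while $\|f_\delta\|_{\mathcal{H}^p}\to\|f\|_{\mathcal{H}^p}$ handles the left-hand side. Without an argument of this kind your proof establishes the proposition only for Dirichlet polynomials.
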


Now we state a version in the framework of weighted Bergman spaces.

\begin{prop}\label{LP}
Let $p\geq 1$ and let $\eta$ be a probability measure on $\R$. Then, for every $f\in\mathcal{A}^p_{\mu}$, we have that
\begin{equation*}
\|f\|_{\mathcal{A}^p_{\mu}}^p\approx|f(+\infty)|^p+\int_{\T^{\infty}}\int_{\R}\int_0^{\infty}|f_{\chi}(\sigma+it)|^{p-2}|f'_{\chi}(\sigma+it)|^2\beta_{\mu}(\sigma)d\sigma d\eta(t)dm_{\infty}(\chi)
\end{equation*}
where, we recall
\[
\beta_{\mu}(\sigma)=\int_0^{\sigma}\mu([0,t])dt=\int_0^{\sigma}\int_0^td\mu(u)dt.
\]
\end{prop}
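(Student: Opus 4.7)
The plan is to reduce the identity to the $\mathcal{H}^p$ Littlewood--Paley formula via the horizontal slicing encoded in Lemma \ref{normApineg}, and then to recognise the weight $\beta_\mu$ as the outcome of a single application of Fubini. Concretely, I would start from
\[
\|f\|_{\mathcal{A}_{\mu}^p}^p=\int_0^{\infty}\|f_{\delta}\|_{\mathcal{H}^p}^pd\mu(\delta),
\]
which is Lemma \ref{normApineg}. By Lemma \ref{horiz}, $f_\delta\in\mathcal{H}^p$ for every $\delta>0$, hence Proposition \ref{LPHp} applies and yields
\[
\|f_\delta\|_{\mathcal{H}^p}^p\approx|f_\delta(+\infty)|^p+\int_{\T^\infty}\!\int_{\R}\!\int_0^\infty|f_{\chi,\delta}(\sigma+it)|^{p-2}|f'_{\chi,\delta}(\sigma+it)|^2\sigma\,d\sigma\,d\eta(t)\,dm_\infty(\chi),
\]
with constants depending only on $p$ and $\eta$ (independent of $\delta$ and $f$).

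Next I would integrate both sides against $d\mu(\delta)$. The term $|f_\delta(+\infty)|^p$ equals $|a_1|^p=|f(+\infty)|^p$ for every $\delta$, so after integration it contributes exactly $|f(+\infty)|^p$ since $\mu$ is a probability measure. For the main term I would use that $f_{\chi,\delta}(s)=f_\chi(s+\delta)$ and perform the change of variables $u=\sigma+\delta$; this turns the inner integral into
\[
\int_\delta^\infty|f_\chi(u+it)|^{p-2}|f'_\chi(u+it)|^2(u-\delta)\,du.
\]
Since the integrand is non-negative, Fubini permits swapping the $d\mu(\delta)$ integral with the $du$ integral, and the $\delta$-integral collapses to
\[
\int_0^u(u-\delta)\,d\mu(\delta)=\beta_\mu(u),
\]
which is precisely the defining formula \eqref{betamu}. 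Assembling the pieces produces exactly the right-hand side of the proposition.

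The only real point of care is the case $1\leq p<2$, where the factor $|f_\chi(\sigma+it)|^{p-2}$ may blow up at zeros of $f_\chi$. As is standard in this context, one adopts the convention that the integrand vanishes whenever $f_\chi=0$ (equivalently, one regularises by $(|f_\chi|^2+\varepsilon)^{(p-2)/2}|f'_\chi|^2$, applies the argument for each $\varepsilon>0$, and passes to the limit $\varepsilon\to 0^+$ using the monotone/dominated convergence theorem, exactly as in the proof of Proposition \ref{LPHp}). This is the main technical subtlety, but it is handled identically in the $\mathcal{H}^p$ case that we are invoking, so it transfers without further work to our weighted setting.
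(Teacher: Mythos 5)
Your argument is correct and coincides with the paper's own proof: both reduce to the $\mathcal{H}^p$ Littlewood--Paley formula applied to the translates $f_\delta$ via Lemma \ref{normApineg}, change variables $u=\delta+\sigma$, and use Fubini to produce $\int_0^u(u-\delta)\,d\mu(\delta)=\beta_\mu(u)$. Your extra remarks (uniformity of the constants in $\delta$, the convention for $|f_\chi|^{p-2}$ when $1\leq p<2$) are sensible but do not change the route.
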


Some particular difficulties appear in the proof of this result and other theorems below.
Let us mention that it is not that clear how to argue by density once we have the inequalities for Dirichlet polynomials or even functions in $\mathcal{H}^p$.
This explains why we often use some translate of the functions and make a crucial use of Lemma \ref{horiz}.

\begin{proof}[Proof of Proposition \ref{LP}]
Let $f\in\mathcal{A}^p_{\mu}$. 
Thanks to Lemma \ref{normApineg}, we can explicitly compute the $\mathcal{A}_{\mu}^p$-norm of $f$ as
\begin{equation}\label{lodelanorma}
    \|f\|_{\mathcal{A}_{\mu}^p}^p=\int_0^{\infty}\|f_{\delta}\|_{\mathcal{H}^p}^pd\mu(\delta).
\end{equation}
We begin by applying Proposition \ref{LPHp} to the $\mathcal{H}^p$ function $f_{\delta}$ so that
\begin{align*}
\|f_{\delta}\|_{\mathcal{H}^p}^p&\approx|f(+\infty)|^p+
\int_{\T^{\infty}}\int_{\R}\int_0^{\infty}|f_{\chi}(\delta+\sigma+it)|^{p-2}|f'_{\chi}(\delta+\sigma+it)|^2\sigma d\sigma d\eta(t)dm_{\infty}(\chi)\\
&
\approx|f(+\infty)|^p+
\int_{\T^{\infty}}\int_{\R}\int_{\delta}^{\infty}
|f_{\chi}(u+it)|^{p-2}|f'_{\chi}(u+it)|^2(u-\delta)du d\eta(t)dm_{\infty}(\chi).
\end{align*}
Now, carrying out the change of variables $\delta+\sigma=u$ in the above integral and then integrating with respect to the measure $d\mu$ gives
\begin{align*}
\int_0^{\infty} &\|f_{\delta}\|_{\mathcal{H}^p}^pd\mu(\delta)
   \\&
   \approx|f(+\infty)|^p\\&+
\int_0^{\infty}\!\!\int_{\T^{\infty}}\!\int_{\R}\int_{\delta}^{\infty}\!\!
|f_{\chi}(u+it)|^{p-2}|f'_{\chi}(u+it)|^2(u-\delta)du d\eta(t)dm_{\infty}(\chi)d\mu(\delta).
\end{align*}
Applying Fubini in the last integral, we find that
\begin{align*}
\int_{\T^{\infty}}&\int_{\R}\int_0^{\infty}\int_{\delta}^{\infty}
|f_{\chi}(u+it)|^{p-2}|f'_{\chi}(u+it)|^2(u-\delta)dud\mu(\delta)d\eta(t)dm_{\infty}(\chi)\\
&=\int_{\T^{\infty}}\int_{\R}\int_0^{\infty}\int_0^u
(u-\delta)d\mu(\delta)|f_{\chi}(u+it)|^{p-2}|f'_{\chi}(u+it)|^2dud\eta(t)dm_{\infty}(\chi)\\
&=
\int_{\T^{\infty}}\int_{\R}\int_0^{\infty}\beta_{\mu}(u)|f_{\chi}(u+it)|^{p-2}|f'_{\chi}(u+it)|^2dud\eta(t)dm_{\infty}(\chi).
\end{align*}
Using this together with \eqref{lodelanorma}
gives the desired result.\end{proof}

\section{Volterra operator on \texorpdfstring{$\mathcal{A}_\mu^p$}{}}\label{sec:volterraonberg}

\subsection{A sufficient condition for boundedness}
In what follows, we shall repeatedly use the following remark:

\begin{equation}\label{opertransl}
(T_gf)_{\sigma}(s)=(T_gf)(s+\sigma)=T_{g_{\sigma}}f_{\sigma}(s),\qquad \sigma\geq0.
\end{equation}

The goal of this subsection is to prove the following theorem.

\begin{teor}\label{carlesonberg}
   Let $1\leq p<\infty$ and let $\mu$ be an admissible measure satisfying the conditions from Definition \ref{mubloch}. Suppose that $g\in\emph{Bloch}_{\mu}(\C_+)\cap\mathcal{D}$. Then, the operator $T_g$ is bounded on $\mathcal{A}^p_{\mu}$. Moreover, $$\|T_g\|_{\mathcal{L}(\mathcal{A}_{\mu}^p)}\lesssim \|g\|_{\emph{Bloch}_{\mu}(\C_+)},$$ where the underlying constants may depend on $p$, but not on $g$.
\end{teor}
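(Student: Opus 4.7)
The strategy adapts the Aleman--Siskakis approach to the Bergman Dirichlet series setting. Two ingredients drive the argument: the Littlewood--Paley identity of Proposition \ref{LP}, and the $\text{Bloch}_\mu$ hypothesis itself, which encodes a Carleson-type estimate for $\mathcal{A}_\mu^p$. The first move is to apply Proposition \ref{LP} (with a fixed probability measure $\eta$ on $\R$) to $F := T_g f$. Since $F(+\infty) = 0$ and $F'(s) = f(s) g'(s)$, one obtains
\[
\|T_g f\|_{\mathcal{A}_\mu^p}^p \approx \int_{\T^\infty \times \R \times (0, \infty)} |F_\chi(\sigma+it)|^{p-2} |f_\chi(\sigma+it)|^2 |g'_\chi(\sigma+it)|^2\, \beta_\mu(\sigma)\, d\sigma\, d\eta(t)\, dm_\infty(\chi).
\]
A density step---treating first $f$ a Dirichlet polynomial, for which $T_g f \in \mathcal{A}_\mu^p$ is clear---legitimizes the subsequent manipulations for general $f \in \mathcal{A}_\mu^p$.

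The key reduction uses the definition $\omega(\sigma)^2 = \beta_\mu(\sigma)/h(\sigma)$: the hypothesis $g \in \text{Bloch}_\mu(\C_+)$ immediately yields, almost everywhere on $(0,1] \times \R$,
\[
|g'(\sigma+it)|^2 \beta_\mu(\sigma) \leq \|g\|_{\text{Bloch}_\mu}^2\, h(\sigma).
\]
Since $\mathbf{1}_{(0,1]}(\sigma)\, h(\sigma)\, d\sigma \leq d\mu$, Lemma \ref{hp} combined with Lemma \ref{normApineg} shows that the measure $d\nu := h(\sigma) \mathbf{1}_{(0,1]}\, d\sigma\, d\eta\, dm_\infty$ is a \emph{Carleson measure} for $\mathcal{A}_\mu^p$, i.e.\ $\int |u|^p\, d\nu \leq \|u\|_{\mathcal{A}_\mu^p}^p$ for every $u \in \mathcal{A}_\mu^p$. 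Splitting the Littlewood--Paley integral above at $\sigma = 1$, the contribution $I_1$ from $(0,1]$ is then bounded by $\|g\|^2 \int |F|^{p-2} |f|^2\, d\nu$, which via Hölder's inequality with conjugate exponents $p/(p-2)$ and $p/2$ is at most $\|g\|^2 \|T_g f\|_{\mathcal{A}_\mu^p}^{p-2}\, \|f\|_{\mathcal{A}_\mu^p}^2$ (trivially for $p = 2$; for $p > 2$ by applying the Carleson inequality twice, to $|F|^p$ and $|f|^p$).

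For the tail $\sigma \in (1, \infty)$, the fact that $g$ is bounded on $\C_1$ (built into the $\text{Bloch}_\mu$ norm) implies $g_\sigma \in \mathcal{H}^\infty \subset \text{BMOA}(\C_+)$ for every $\sigma \geq 1$, with BMOA norm dominated by $\|g\|_{H^\infty(\C_1)}$. The Brevig--Perfekt--Seip result of \cite{brevig} then yields that $T_{g_\sigma}$ is bounded on $\mathcal{H}^p$ with norm $\lesssim \|g\|$. Passing to the $\mathcal{H}^p$-slice representation of Lemma \ref{normApineg} via \eqref{opertransl},
\[
\int_1^\infty \|(T_g f)_\sigma\|_{\mathcal{H}^p}^p\, d\mu(\sigma) = \int_1^\infty \|T_{g_\sigma} f_\sigma\|_{\mathcal{H}^p}^p\, d\mu(\sigma) \lesssim \|g\|^p\, \|f\|_{\mathcal{A}_\mu^p}^p,
\]
which controls the tail. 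Combining this with the estimate for $I_1$ and rearranging (Young's inequality absorbs $\|T_g f\|^{p-2}$ into the left-hand side) delivers the theorem.

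The main obstacle is the range $1 \leq p < 2$: here the negative exponent $p-2$ prevents the direct use of Hölder's inequality in the second paragraph. The remedy I would pursue is to bypass the Bergman Littlewood--Paley formula for $F$ and instead apply the $\mathcal{H}^p$ square function characterization (Theorem \ref{breper1}) slice by slice to $(T_g f)_\sigma = T_{g_\sigma} f_\sigma$. The pointwise bound
\[
S\big((T_g f)_{\sigma,\chi}\big)(\tau)^2 = \int_{\Gamma_\tau} |f_{\sigma,\chi}|^2 |g'_{\sigma,\chi}|^2\, du\, dv \leq \big(f^*_{\sigma,\chi}(\tau)\big)^2 \int_{\Gamma_\tau} |g'_{\sigma,\chi}|^2\, du\, dv,
\]
together with the non-tangential maximal estimate of Theorem \ref{breper}, reduces matters to controlling the inner Carleson-type quantity $\int_{\Gamma_\tau}|g'|^2$, which $\text{Bloch}_\mu$ delivers in combination with the cone-to-strip transfer of Lemma \ref{lematecnraro}. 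This sidesteps the negative exponent entirely while still exploiting the $\text{Bloch}_\mu$-Carleson reduction above.
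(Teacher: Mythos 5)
For $p\geq 2$ your argument is essentially the paper's: the Littlewood--Paley identity of Proposition \ref{LP} applied to $T_gf$, the pointwise reduction $|g'(\sigma+it)|^2\beta_\mu(\sigma)\leq\|g\|_{\text{Bloch}_\mu}^2h(\sigma)$, the Carleson property of $h\mathbf{1}_{(0,1]}\,d\sigma\,d\eta\,dm_\infty$ for $\mathcal{A}_\mu^p$ (Lemma \ref{hp} plus Lemma \ref{normApineg}), and H\"older with exponents $p/(p-2)$, $p/2$; this is what Theorem \ref{copiaux} and its verification in the paper do. Two remarks, though. First, you split the Littlewood--Paley integral at $\sigma=1$ but then estimate the tail of a \emph{different} decomposition, namely $\int_1^\infty\|(T_gf)_\sigma\|_{\mathcal{H}^p}^p\,d\mu(\sigma)$ coming from Lemma \ref{normApineg}; controlling that quantity does not control the Littlewood--Paley tail $\int\int\int_1^\infty|T_gf|^{p-2}|f|^2|g'|^2\beta_\mu$, so as written that piece of the norm is never bounded. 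The slip is repairable (for $\sigma\geq1$ one has $|g'_\chi(\sigma+it)|\lesssim 2^{-\sigma}\|g\|$ from Cauchy's estimate and the boundedness of $g$ on $\C_1$, together with pointwise bounds on $f_\chi$, $(T_gf)_\chi$ and $\beta_\mu(\sigma)\leq\sigma$, which is how the paper treats the far half-plane), and your BMOA-of-translates bound is correct but invokes much heavier machinery than needed. Second, absorbing $\|T_gf\|^{p-2}$ into the left-hand side requires knowing a priori that $T_gf\in\mathcal{A}_\mu^p$; the paper secures this by first proving $\text{Bloch}_\mu(\C_+)\cap\mathcal{D}\subset\bigcap_p\mathcal{A}_\mu^p$ and working with $(T_gf)_\delta$, $\delta\to0^+$ (Lemmas \ref{tgf} and \ref{horiz}); your one-line "density step" hides exactly this point.

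The genuine gap is your remedy for $1\leq p<2$. Estimating $S\big((T_gf)_{\sigma,\chi}\big)(\tau)^2\leq\big(f^*_{\sigma,\chi}(\tau)\big)^2\int_{\Gamma_\tau}|g'_{\sigma,\chi}|^2\,du\,dv$ pulls the whole factor $|f|^2$ out of the cone and leaves the bare square function of $g$, and membership in $\text{Bloch}_\mu(\C_+)$ does \emph{not} control $\int_{\Gamma_\tau}|g'|^2\,du\,dv$: uniform boundedness of that quantity over cones is a Carleson-measure (BMOA-type) condition, strictly stronger than any Bloch condition. Concretely, for the measures $\mu_\alpha$ of \eqref{macaden1} one has $\omega(\sigma)\approx\sigma$, so $\text{Bloch}_{\mu_\alpha}\cap\mathcal{D}=\text{Bloch}(\C_+)\cap\mathcal{D}$, and for $g$ a primitive of $\zeta(s+1)-1$ (the test function of Proposition \ref{functional}) the bound $|g'(\sigma+it)|\approx1/\sigma$ is sharp, so $\int_{\Gamma_\tau}|g'|^2\,du\,dv\gtrsim\int_0 du/u$ diverges; the translated slices only give $\int_{\Gamma_\tau}|g'_{\sigma,\chi}|^2\approx\log(1/\sigma)$, and this logarithmic loss cannot be absorbed after integrating $\|f_\sigma\|_{\mathcal{H}^p}^p$ against $d\mu_\alpha$ (test $f(s)=N^{-s}$ and let $N\to\infty$). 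The correct move, which is the paper's, is to pull out only $|f|^{2-p}$ through $(f^*)^{2-p}$ and keep $|f|^p$ paired with $|g'|^2$ inside the cone, apply H\"older in $\tau$ with exponents $2/(2-p)$ and $2/p$, use Theorem \ref{breper} on the maximal factor and Lemma \ref{lematecnraro} to pass from cones to the strip, and only then, after integrating in $d\mu(\delta)$ so that the weight $\beta_\mu$ appears, invoke the $\text{Bloch}_\mu$ hypothesis through the Carleson-type condition \eqref{carlesontype}. So your $p\geq2$ half follows the paper's route; the $p<2$ half as sketched would fail.
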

Inspired by the argument for the $\mathcal{H}^p$ setting from \cite{brevig}, in order to establish Theorem \ref{carlesonberg}, we will use a Carleson measure type condition which is actually sufficient for the boundedness of $T_g$ on $\mathcal{A}^p_{\mu}$. 
We begin by proving the corresponding Carleson condition version in the setting of the spaces $\mathcal{A}_{\mu}^p$. The next lemmas will be useful in the proof of such theorem, so we prove them beforehand.
\begin{lema}\label{tgf}
 Let $p>1$. Consider $f$ be a Dirichlet polynomial and $g\in\mathcal{A}^p_{\mu}$. Then, $T_gf\in\mathcal{A}^p_{\mu}$.
\end{lema}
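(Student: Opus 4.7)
Write $f(s) = \sum_{n=1}^N a_n n^{-s}$ and use the linearity of $T_g$: it suffices to show that $T_g(n^{-s}) \in \mathcal{A}^p_\mu$ for each $n \in \{1,\dots,N\}$. For $n=1$ this is immediate, since
\[
T_g(1)(s) = -\int_s^\infty g'(w)\,dw = g(s) - g(+\infty) \in \mathcal{A}^p_\mu.
\]
For $n \geq 2$, integration by parts in the defining integral produces the identity
\[
T_g(n^{-s})(s) \;=\; n^{-s}g(s) \;-\; \log(n)\, H_n(s), \qquad H_n(s) := \int_s^\infty n^{-w} g(w)\,dw,
\]
valid on the half-plane of convergence of $g$ (in particular, on $\C_{1/2}$) thanks to the exponential decay of $n^{-w}$ as $\textrm{Re}(w) \to +\infty$. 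The goal is then to show both summands lie in $\mathcal{A}^p_\mu$.

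For the first term, note that $(n^{-s}g)_\delta = n^{-\delta}\, n^{-s} g_\delta$, and multiplication by $n^{-s}$ corresponds to multiplication by a unimodular monomial on the Bohr lift, so $\|n^{-s} g_\delta\|_{\mathcal{H}^p} = \|g_\delta\|_{\mathcal{H}^p}$. Integrating against $d\mu$ via Lemma~\ref{normApineg} then gives $\|n^{-s}g\|_{\mathcal{A}^p_\mu}^p = \int_0^\infty n^{-p\delta}\|g_\delta\|_{\mathcal{H}^p}^p\,d\mu(\delta) \leq \|g\|_{\mathcal{A}^p_\mu}^p$. For the second term, the substitution $w = s+t$ provides the Laplace-type representation
\[
H_n(s) \;=\; n^{-s}\int_0^\infty g_t(s)\, n^{-t}\,dt,
\]
and combining Minkowski's integral inequality with the horizontal-translation contraction $\|g_t\|_{\mathcal{A}^p_\mu} \leq \|g\|_{\mathcal{A}^p_\mu}$ from Lemma~\ref{trahor} yields
\[
\|H_n\|_{\mathcal{A}^p_\mu} \;\leq\; \int_0^\infty \|g_t\|_{\mathcal{A}^p_\mu}\, n^{-t}\,dt \;\leq\; \frac{\|g\|_{\mathcal{A}^p_\mu}}{\log n}.
\]
Hence $\log(n) H_n \in \mathcal{A}^p_\mu$ with norm at most $\|g\|_{\mathcal{A}^p_\mu}$, and the decomposition closes the argument.

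\textbf{The main technical obstacle} is rigorously justifying the interchange of integration and summation (the Bochner-valued Minkowski step) for a general $g \in \mathcal{A}^p_\mu$ rather than a mere polynomial. I would either argue directly on the Dirichlet-series level (absolute convergence holds on every $\C_{1/2+\varepsilon}$, and the relevant $\mathcal{H}^p$-norms of horizontal translates are uniformly controlled by Lemma~\ref{horiz} and Lemma~\ref{normApineg}), or approximate $g$ by its partial sums $S_M g$ (for which the IBP identity is an elementary computation involving only finitely many coefficients), verify the estimates uniformly in $M$, and pass to the limit using the already-established norm bound. Along the way one also gets the quantitative estimate $\|T_g f\|_{\mathcal{A}^p_\mu} \lesssim_f \|g\|_{\mathcal{A}^p_\mu}$ with a constant depending on $f$.
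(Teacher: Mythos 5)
Your argument is correct in substance, but it takes a genuinely different route from the paper. The paper works entirely at the coefficient level: for a monomial $e_k=k^{-s}$ it writes $T_g(e_k)=e_k\sum_n a_n\frac{\log n}{\log k+\log n}e_n$, splits the multiplier as $1-\lambda_{n,k}$ with $\lambda_{n,k}=\frac{\log k}{\log k+\log n}$ decreasing in $n$, and then invokes the Schauder-basis property of $(e_n)$ in $\mathcal{A}^p_\mu$ (this is where $p>1$ enters) together with Abel summation, finishing with the fact that multiplication by $e_k$ is a contraction. You instead integrate by parts, obtaining $T_g(n^{-s})=n^{-s}g-\log(n)\,n^{-s}\int_0^\infty n^{-t}g_t\,dt$, and estimate the two pieces using the isometry of monomial multiplication on the Bohr lift together with Lemma~\ref{normApineg}, and Minkowski's inequality combined with the translation contraction of Lemma~\ref{trahor}. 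What your approach buys: an explicit quantitative bound $\|T_g(n^{-s})\|_{\mathcal{A}^p_\mu}\leq 2\|g\|_{\mathcal{A}^p_\mu}$ with no basis constant, and, in principle, validity down to $p=1$, since nothing in the direct estimates uses the basis structure; what the paper's proof buys is that it avoids any vector-valued integration and stays within purely formal manipulations of the basis expansion. One caveat on your fallback: approximating $g$ by its partial sums $S_Mg$ reintroduces the Schauder-basis property (hence $p>1$), so if you want the argument free of that machinery you should approximate by arbitrary Dirichlet polynomials, which are dense in $\mathcal{A}^p_\mu$ for all $p\geq1$; the resulting operators $T_{P_M}(n^{-s})$ are then Cauchy by your uniform bound applied to $P_M-P_{M'}$, and the limit is identified pointwise using the locally uniform convergence $P_M\to g$ on $\C_{1/2}$ plus the uniform coefficient bounds controlling the tail of the defining integral. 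Alternatively, the Bochner-integral route closes cleanly because $t\mapsto g_t$ is norm-continuous (translations form a contraction semigroup and polynomials are dense) and point evaluations on $\C_{1/2}$ are bounded functionals that commute with the Bochner integral. Either way the gap you flag is fillable by standard arguments, so the proof stands.
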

\begin{proof}
Let $g(s)=\sum_{n\geq1}a_nn^{-s}$ and fix $p>1$. By linearity, it suffices to prove the result for Dirichlet monomials. Hence, for $k\geq1$, take ${\rm e}_k(s)=k^{-s}$. For every $s$ in a suitable right half-plane, we have
$$
T_{g}({\rm e}_k)(s)={\rm e}_{k}(s)\sum_{n\geq1}a_n(g)\dfrac{\log(n)}{\log(k)+\log(n)}{\rm e}_{n}(s).
$$
Since $\big({\rm e}_n\big)_{n\geq1}$ is a Schauder basis of $\mathcal{A}^p_{\mu}$ and $g\in\mathcal{A}^p_{\mu}$, the series $\sum_{n\geq1}a_n{\rm e}_{n}$ converges in $\mathcal{A}^p_{\mu}$. For each $k\in\N$, we consider the sequence $\{\lambda_{n,k}\}_n$, where
\[
\lambda_{n,k}
=
\dfrac{\log(k)}{\log(k)+\log(n)}\cdot
\]
Now,
\begin{align*}
    \sum_{n\geq1}a_n\dfrac{\log(n)}{\log(k)+\log(n)}{\rm e}_{n}(s)
    =
    \sum_{n\geq1}a_ne_n
    -
    \sum_{n\geq1}a_n\lambda_{n,k}e_n
\end{align*}
The first term converges in $\mathcal{A}^p_{\mu}$, by what was said before. Regarding the second one,  apply Abel's summation and then use both the fact that, for fixed $k$, $\{\lambda_{n,k}\}$ is non-increasing in $n$, and the fact that $\{e_n\}$ is a Schauder basis for $\mathcal{A}^p_{\mu}$.
\end{proof}
\begin{lema}\label{loco1}
For all $c>0$, there exists $\alpha_{\mu}(c)>0$ such that, if $g\in\mathcal{A}^2_{\mu,\infty}$ and
   \[
I(g):=\left(\int_{\T^\infty}\int_{\R}\int_0^{\infty}|g_{\chi}'(\sigma+it)|^2\beta_{\mu}(\sigma)d\sigma \frac{dt}{1+t^2}dm_{\infty}(\chi)\right)^{1/2},
   \]
  then
   \begin{equation*}
       \|g_c\|_{\mathcal{H}^2}\leq \alpha_{\mu}(c)I(g).
   \end{equation*}
\end{lema}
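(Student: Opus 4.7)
The plan is to identify $I(g)$ with the $\mathcal{A}^2_\mu$-norm (up to constants) by invoking the Littlewood--Paley formula, and then use the boundedness of the horizontal translation $T_c\colon\mathcal{A}^2_\mu\to\mathcal{H}^2$ already established earlier in the paper.

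More precisely, I would first apply Proposition \ref{LP} with $p=2$ and the probability measure $d\eta(t)=\dfrac{dt}{\pi(1+t^2)}$ on $\R$. This yields
\[
\|g\|_{\mathcal{A}^2_\mu}^2\approx |g(+\infty)|^2+\int_{\T^\infty}\!\!\int_{\R}\!\int_0^{\infty}|g_\chi'(\sigma+it)|^2\beta_\mu(\sigma)\,d\sigma\,\frac{dt}{\pi(1+t^2)}\,dm_\infty(\chi).
\]
Since $g\in\mathcal{A}^2_{\mu,\infty}$ means that $g$ vanishes at infinity, $g(+\infty)=0$, so the first term disappears and we get $\|g\|_{\mathcal{A}^2_\mu}\approx I(g)$, with constants depending only on $\mu$.

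The second step is to invoke the horizontal embedding of Lemma \ref{horiz} (with $p=q=2$) in the quantitative form of Remark \ref{remarko}. This gives, for every $c>0$,
\[
\|g_c\|_{\mathcal{H}^2}\lesssim \frac{\|g\|_{\mathcal{A}^2_\mu}}{\mu([0,c])^{1/2}}.
\]
Combining the two estimates we obtain
\[
\|g_c\|_{\mathcal{H}^2}\lesssim \frac{1}{\mu([0,c])^{1/2}}\,I(g),
\]
so we can take $\alpha_\mu(c)$ to be a constant multiple of $\mu([0,c])^{-1/2}$, which is finite and strictly positive because $0\in\mathrm{supp}(\mu)$ and $c>0$.

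There is no substantial obstacle here: the whole argument is a combination of two tools that have already been set up in Section \ref{section:LP} and Section \ref{sec:apmuetal}. The only minor point to be careful about is to make sure the Littlewood--Paley estimate is applied with exactly the weight $\frac{dt}{\pi(1+t^2)}$ appearing in the definition of $I(g)$, and to record that the hypothesis $g\in\mathcal{A}^2_{\mu,\infty}$ is used precisely to kill the $|g(+\infty)|^2$ term so that $I(g)$ controls the full $\mathcal{A}^2_\mu$-norm and not just a seminorm.
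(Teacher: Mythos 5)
Your proposal is correct and matches the paper's own proof: it combines the quantitative translation embedding of Remark \ref{remarko} (for $p=2$) with the Littlewood--Paley formula of Proposition \ref{LP}, using $g(+\infty)=0$ to discard the constant term, exactly as the paper does. The only cosmetic difference is the normalisation factor $\pi$ in the measure $\frac{dt}{\pi(1+t^2)}$, which is harmlessly absorbed into the constant $\alpha_{\mu}(c)$.
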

\begin{proof}
 Let $g\in\mathcal{A}^2_{\mu,\infty}$ and $c>0$. Using Remark \ref{remarko} for $p=2$, we have that
\begin{equation*}
\|g_{c}\|_{\mathcal{H}^2}^{2}\lesssim\frac{\|g\|_{\mathcal{A}^2_{\mu}}^2}{\mu((0,c])}\cdot
\end{equation*}
Now, applying Proposition \ref{LP} to the $\mathcal{A}^2_{\mu}$ function $g$, as $g(+\infty)=0$, the lemma follows.
\end{proof}
\begin{lema}\label{loco2}
    Let $g\in\mathcal{A}^p_{\mu,\infty}$. Then, 
    \[
    |g'(\sigma+it)|\lesssim 2^{-\sigma}\|g_{1/6}\|_{\mathcal{H}^2},\quad \sigma\geq1.
    \]
\end{lema}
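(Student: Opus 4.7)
The plan is to exploit the Dirichlet series expansion of $g$ and bound $g'(\sigma+it)$ coefficient-wise via Cauchy--Schwarz, using the horizontal translate $g_{1/6}$ in $\mathcal{H}^2$ as the control. Since we assume $g\in\mathcal{A}^p_{\mu,\infty}$, meaning $g(+\infty)=0$, we can write $g(s)=\sum_{n\geq 2}a_n n^{-s}$. By Lemma \ref{horiz} applied with $q=2$ and $\varepsilon=1/6$, the translate $g_{1/6}$ lies in $\mathcal{H}^2$ (the norm bound depends on $p$ but not on $g$ in the desired way), and Parseval yields
\[
\|g_{1/6}\|_{\mathcal{H}^2}^2=\sum_{n\geq 2}|a_n|^2 n^{-1/3}.
\]

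Next, for $\sigma\geq 1$ and $t\in\R$, the termwise estimate $|g'(\sigma+it)|\leq \sum_{n\geq 2}|a_n|(\log n) n^{-\sigma}$ is combined with the splitting $n^{-\sigma}=n^{-1/6}\cdot n^{-\sigma+1/6}$ and Cauchy--Schwarz:
\[
|g'(\sigma+it)|\leq \Bigl(\sum_{n\geq 2}|a_n|^2 n^{-1/3}\Bigr)^{1/2}\Bigl(\sum_{n\geq 2}(\log n)^2 n^{-2\sigma+1/3}\Bigr)^{1/2}=\|g_{1/6}\|_{\mathcal{H}^2}\cdot \Sigma(\sigma)^{1/2}.
\]

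The remaining task is to show $\Sigma(\sigma)^{1/2}\lesssim 2^{-\sigma}$ for $\sigma\geq 1$. Factoring $n^{-2\sigma+1/3}=n^{-2(\sigma-1)}\cdot n^{-5/3}$ and using $n\geq 2$, $\sigma\geq 1$, we have $n^{-2(\sigma-1)}\leq 2^{-2(\sigma-1)}$, hence
\[
\Sigma(\sigma)\leq 2^{-2(\sigma-1)}\sum_{n\geq 2}(\log n)^2 n^{-5/3}=C\cdot 2^{-2\sigma},
\]
with the sum finite since $5/3>1$. Taking square roots gives $\Sigma(\sigma)^{1/2}\lesssim 2^{-\sigma}$, and combining with the Cauchy--Schwarz inequality yields the claim. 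The only non-routine ingredient is invoking Lemma \ref{horiz} to reach the $\mathcal{H}^2$ space from $\mathcal{A}^p_\mu$; once this is available, the rest is a direct computation, with no real obstacle—the choice of the shift $1/6$ is not essential, any $\eta\in(0,1/2)$ would give the same geometric decay in $2^{-\sigma}$.
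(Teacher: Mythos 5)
Your proof is correct, but it follows a genuinely different route from the paper. The paper argues function-theoretically: Cauchy's inequality bounds $|g'(\sigma+it)|$ by $\|g\|_{\mathcal{H}^{\infty}(\C_{\sigma-c})}$, then the pointwise embedding $\mathcal{H}^2\hookrightarrow\mathcal{H}^{\infty}(\C_{1/2+c})$ converts this into $\|(g_c)_{\sigma-1/2-3c}\|_{\mathcal{H}^2}$, and finally the $\mathcal{H}^p$-version of Lemma \ref{prinul} (applicable because the first coefficient vanishes) produces the factor $2^{-(\sigma-1/2-3c)}$; the choice $c=1/6$ then gives the statement. You instead work directly with the coefficients: Lemma \ref{horiz} puts $g_{1/6}$ in $\mathcal{H}^2$, Parseval identifies $\|g_{1/6}\|_{\mathcal{H}^2}^2=\sum_{n\geq2}|a_n|^2n^{-1/3}$, and a single Cauchy--Schwarz split $|a_n|(\log n)n^{-\sigma}=(|a_n|n^{-1/6})\cdot((\log n)n^{-\sigma+1/6})$ together with $n^{-2(\sigma-1)}\leq 2^{-2(\sigma-1)}$ for $n\geq2$ yields the geometric decay. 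The one point you pass over quickly -- the legitimacy of the termwise bound on $g'$ -- is indeed harmless: since $g_{1/6}\in\mathcal{H}^2$, the series for $g$ converges absolutely in $\C_{2/3}$, so differentiating term by term and using the triangle inequality at $\sigma\geq1$ is justified. Comparing the two: your argument is more elementary and makes transparent both where the hypothesis $g(+\infty)=0$ enters (the sum starts at $n=2$, which is the sole source of the $2^{-\sigma}$) and the flexibility in the shift (any $\eta<1/2$ works); on the other hand, it is tied to the Hilbert-space structure of $\mathcal{H}^2$ via Parseval, whereas the paper's chain of Cauchy estimate, embedding, and translation-decay lemma uses only norm-level information and so transfers verbatim to settings where no orthogonality is available.
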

\begin{proof}
Let $\sigma\geq1$. By Cauchy's inequality and the fact that $\mathcal{H}^2\hookrightarrow\mathcal{H}^{\infty}(\C_{1/2+c})$, $c>0$, we have 
    \begin{align*}
        |g'(\sigma+it)|\leq \frac1c\|g\|_{\mathcal{H}^{\infty}(\C_{\sigma-c})}
       & \lesssim
        \|g_{\sigma-c-(1/2+c)}\|_{\mathcal{H}^2}\\
        &=\|(g_c)_{\sigma-1/2-3c}\|_{\mathcal{H}^2}.
    \end{align*}
Now, by the $\mathcal{H}^p$-version of Lemma \ref{prinul}, if $\sigma-1/2-3c>0$, $c<1$,
\[
|g'(\sigma+it)|\lesssim
\|(g_c)_{\sigma-1/2-3c}\|_{\mathcal{H}^2}
\leq 2^{-(\sigma-1/2-3c)}\|g_{c}\|_{\mathcal{H}^2}.
\]
Taking $c=1/6$ gives the desired result.
\end{proof}
\begin{teor}\label{copiaux}
    Let $1\leq p<\infty$. Assume both $g\in\mathcal{A}^p_{\mu}$ and the existence of a positive constant $C=C(g,p)$ such that for every $f\in\mathcal{A}^p_{\mu}$
    \begin{align}\label{carlesontype}
    \int_{\T^\infty}\int_{\R}\int_0^{\infty}|f_{\chi}(\sigma+it)|^p|g_{\chi}'(\sigma+it)|^2\beta_{\mu}(\sigma)d\sigma \frac{dt}{1+t^2}&dm_{\infty}(\chi)
        \\&\leq C(g,p)^2\|f\|_{\mathcal{A}^p_{\mu}}^p\nonumber.
    \end{align}
Then the operator $T_g$ is bounded on $\mathcal{A}^p_{\mu}$.         
    
    Conversely, if the operator $T_g$ acts boundedly on $\mathcal{A}_{\mu}^p$ and $p\geq2$, then $g\in\mathcal{A}^p_{\mu}$ and condition \eqref{carlesontype} is satisfied for certain constant $C(g,p)$. Furthermore, if
\[
C(g,p)^2:= \sup_{\|f\|_{\mathcal{
A}^p_{\mu}}=1}\int_{\T^\infty}\int_{\R}\int_0^{\infty}|f_{\chi}(\sigma+it)|^p|g_{\chi}'(\sigma+it)|^2\beta_{\mu}(\sigma)d\sigma \frac{dt}{1+t^2}dm_{\infty}(\chi),
        \]
        then
        $C(g,p)\lesssim\|T_g\|_{\mathcal{L}(\mathcal{A}^p_{\mu})}$ if $p\geq2$ and $\|T_g\|_{\mathcal{L}(\mathcal{A}^p_{\mu})}\lesssim C(g,p)$ for all $p\geq1$.
\end{teor}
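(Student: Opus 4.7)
The common starting point for both directions is the Littlewood--Paley identity of Proposition \ref{LP} applied to $T_gf$ with the probability measure $d\eta(t)=\frac{dt}{\pi(1+t^2)}$ on $\R$. Since $T_gf(+\infty)=0$ and $(T_gf)'=-fg'$, it yields
\[
\|T_gf\|_{\mathcal{A}^p_{\mu}}^{p} \approx \int_{\T^{\infty}}\!\int_{\R}\!\int_{0}^{\infty} |T_gf_{\chi}(\sigma+it)|^{p-2}|f_{\chi}(\sigma+it)|^{2}|g_{\chi}'(\sigma+it)|^{2}\beta_{\mu}(\sigma)\,d\sigma\,\frac{dt}{\pi(1+t^{2})}\,dm_{\infty}(\chi),
\]
so the game is to compare this ``mixed'' integral with the Carleson expression in \eqref{carlesontype}.

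For the sufficient direction, the case $p=2$ is immediate: the weight $|T_gf|^{p-2}$ disappears and \eqref{carlesontype} applied to $f$ yields $\|T_gf\|^{2}\lesssim C(g,2)^{2}\|f\|^{2}$. For $p>2$, I apply Hölder's inequality with conjugate exponents $p/(p-2)$ and $p/2$ against the measure $|g_{\chi}'|^{2}\beta_{\mu}\,d\sigma\,dt/(1+t^{2})\,dm_{\infty}(\chi)$, splitting the mixed integral as a product of two integrals of the form in \eqref{carlesontype}; invoking the hypothesis once for $T_gf$ and once for $f$ gives
\[
\|T_gf\|_{\mathcal{A}^{p}_{\mu}}^{p}\lesssim C(g,p)^{2}\,\|T_gf\|_{\mathcal{A}^{p}_{\mu}}^{p-2}\,\|f\|_{\mathcal{A}^{p}_{\mu}}^{2},
\]
and dividing yields $\|T_g\|\lesssim C(g,p)$. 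Since \emph{a priori} $\|T_gf\|$ may be infinite, I first run the argument for Dirichlet polynomials $f$ (so that $T_gf\in\mathcal{A}^{p}_{\mu}$ thanks to Lemma \ref{tgf}) and then extend to all $f\in\mathcal{A}^{p}_{\mu}$ by density.

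The case $1\leq p<2$ escapes this bootstrap because $p-2<0$, so I switch from the LP identity to the square-function formulation. Using Lemma \ref{normApineg} to write $\|T_gf\|_{\mathcal{A}^{p}_{\mu}}^{p}=\int_{0}^{\infty}\|(T_gf)_{\delta}\|_{\mathcal{H}^{p}}^{p}\,d\mu(\delta)$ and Theorem \ref{breper1} on each horizontal translate, I use $S((T_gf)_{\delta})(\tau)^{2}=\int_{\Gamma_{\tau}}|f_{\delta}|^{2}|g_{\delta}'|^{2}\,d\sigma\,dt$. The plan is to dominate $|f|^{2}$ on $\Gamma_{\tau}$ by $f^{*}(\tau)^{2}$, pull this factor out of the square-function integral, raise to the $p/2$ power and integrate in $\delta$; the Fubini change of variables used in the proof of Proposition \ref{LP} turns $d\sigma\,d\mu(\delta)$ into $\beta_{\mu}(u)\,du$, while Lemma \ref{lematecnraro} transforms the remaining geometry into vertical-strip integrals. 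The outcome is a bound in terms of $\int (f^{*})^{p}|g'|^{2}\beta_{\mu}\,d\sigma\,dt/(1+t^{2})\,dm_{\infty}$, which is controlled via \eqref{carlesontype} applied to $f^{*}$; the non-tangential maximal inequality of Theorem \ref{breper} then replaces $\|f^{*}\|_{\mathcal{A}^{p}_{\mu}}$ by $\|f\|_{\mathcal{A}^{p}_{\mu}}$.

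For the converse with $p\geq2$, testing $T_g$ on the constant function $1\in\mathcal{A}^{p}_{\mu}$ gives $T_g(1)=g-g(+\infty)$, whence $g\in\mathcal{A}^{p}_{\mu}$ with norm controlled by $\|T_g\|$ via Lemma \ref{a1}. For the Carleson bound, the LP identity together with $\|T_gf\|\leq\|T_g\|\|f\|$ provides
\[
\int|T_gf_{\chi}|^{p-2}|f_{\chi}|^{2}|g_{\chi}'|^{2}\beta_{\mu}\,d\sigma\,\frac{dt}{1+t^{2}}\,dm_{\infty}(\chi)\lesssim \|T_g\|^{p}\,\|f\|^{p}.
\]
The hard part is replacing the $|T_gf|^{p-2}$ weight by $|f|^{p-2}$ so as to recover the full Carleson expression, and moreover to do so with the optimal exponent $\|T_g\|^{2}$ (not merely $\|T_g\|^{p}$) in order to achieve $C(g,p)\lesssim\|T_g\|$. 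My plan is to exploit the self-improving property of Carleson measures for $\mathcal{A}^{p}_{\mu}$, in the spirit of the $\mathcal{H}^{p}$ analysis of Brevig--Perfekt--Seip: first derive the $p=2$ Carleson inequality (where the LP identity is exact and the weight $|T_gf|^{p-2}$ is absent), and then propagate it to all $p\geq2$ via a Carleson-measure extrapolation theorem tailored to the $\mathcal{A}^{p}_{\mu}$ scale. Establishing this extrapolation in the present Bergman setting is expected to be the main technical obstacle.
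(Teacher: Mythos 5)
Your sufficiency argument for $p\geq 2$ is essentially the paper's: Hölder with exponents $\frac{p}{p-2}$ and $\frac{p}{2}$ against the measure $|g'_{\chi}|^{2}\beta_{\mu}\,d\sigma\,\frac{dt}{1+t^{2}}\,dm_{\infty}$, the hypothesis applied to $T_gf$ and to $f$, and a bootstrap after first working with Dirichlet polynomials (Lemma \ref{tgf}); the paper only adds the technical device of translating by $\delta>0$ and letting $\delta\to0^{+}$. The problems are in the other two parts.

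For $1\leq p<2$ your plan has a genuine gap. You propose to dominate the whole factor $|f|^{2}$ on $\Gamma_{\tau}$ by $f^{*}(\tau)^{2}$, pull it out, and then control the resulting quantity by ``\eqref{carlesontype} applied to $f^{*}$''. But $f^{*}_{\chi}$ is a maximal function of the real variable $\tau$; it is not an element of $\mathcal{A}^{p}_{\mu}$, so the Carleson hypothesis simply cannot be invoked for it (and an expression such as $\|f^{*}\|_{\mathcal{A}^{p}_{\mu}}$ is not defined). Moreover, once all of $|f|^{2}$ has been extracted, no factor $|f_{\chi}(\sigma+it)|^{p}$ remains inside the integral, so the structure of \eqref{carlesontype} is lost. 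The working splitting is $|f|^{2}=|f|^{2-p}\cdot|f|^{p}$: only the $(2-p)$-power is replaced by $\bigl(f^{*}_{\delta,\chi}(\tau)\bigr)^{2-p}$, Hölder is applied with exponents $\frac{2}{p}$ and $\frac{2}{2-p}$, Theorem \ref{breper} handles the maximal-function factor, and the surviving triple integral still contains $|f_{\chi}|^{p}|g'_{\chi}|^{2}$, so that after integrating in $\delta$ against $\mu$ and a Fubini producing $\beta_{\mu}(u)$ one can apply \eqref{carlesontype} to $f$ itself. Note also that Lemma \ref{lematecnraro} only converts the part of the cone lying in the strip $\mathbb{S}$; the portion $\Gamma_{\tau}\setminus\mathbb{S}$ needs a separate estimate (in the paper, via Lemmas \ref{loco1} and \ref{loco2} together with the boundedness of point evaluation), which your sketch omits.

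The converse for $p\geq2$ is not proved in your proposal: you reduce it to a ``Carleson-measure extrapolation theorem tailored to the $\mathcal{A}^{p}_{\mu}$ scale'' which you acknowledge you have not established, so the argument stops exactly at the decisive step. In addition, your starting inequality only gives the mixed integral $\lesssim\|T_g\|^{p}\|f\|^{p}$, which even if the weight could be exchanged would miss the stated sharp bound $C(g,p)\lesssim\|T_g\|$ (one needs the power $\|T_g\|^{2}$, not $\|T_g\|^{p}$). No extrapolation is needed: starting from the Carleson integrand one writes, on cones inside the strip, $|f|^{p}|g'|^{2}=|f|^{p-2}\,|(T_gf)'|^{2}\leq (f^{*})^{p-2}|(T_gf)'|^{2}$, applies Hölder with $\frac{p}{p-2}$ and $\frac{p}{2}$, and uses Theorem \ref{breper} for the maximal factor and Theorem \ref{breper1} for the square function of $T_gf$, obtaining $\lesssim\|f_{\delta}\|_{\mathcal{H}^{p}}^{p-2}\|(T_gf)_{\delta}\|_{\mathcal{H}^{p}}^{2}$; the region $\sigma>1$ is controlled by the boundedness of $\delta_{s}$ on $\mathcal{A}^{p}_{\mu}$ together with the decay $|g'(\sigma+it)|\lesssim 2^{-\sigma}\|g-a_{1}\|_{\mathcal{A}^{p}_{\mu}}$ (Lemma \ref{loco2}); integrating in $\delta$ against $\mu$, using Hölder and Lemma \ref{normApineg}, and Fubini to recover $\beta_{\mu}$ then yields $C(g,p)^{2}\lesssim\|T_g\|_{\mathcal{L}(\mathcal{A}^{p}_{\mu})}^{2}$ directly.
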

\begin{proof}[Proof of Theorem \ref{copiaux}]
We first assume that \eqref{carlesontype} is satisfied for every $f\in\mathcal{A}^p_{\mu}$. 
In particular, testing $f=1$, we know that $g\in\mathcal{A}_{\mu}^2$ by Proposition \ref{LP}, hence $g\in\mathcal{A}_{\mu}^{\max(p,2)}$. Let $m=\max(p,2)$. 
Since  $g\in\mathcal{A}^m_{\mu}$, for every Dirichlet polynomial $f$, by Lemma \ref{tgf}, $T_gf$ belongs to $\mathcal{A}^m_{\mu}$. Moreover, thanks to  Lemma \ref{horiz}, for every $\delta>0$ and every Dirichlet polynomial $f$,
\begin{equation}\label{claimg}
(T_g(f))_\delta\in\mathcal{H}^p\;.
\end{equation}
By \cite[Proposition 2]{pascal}, for almost every $\chi\in\T^{\infty}$, the measure $\nu_{g,\chi}$, given by
$$d\nu_{g,\chi}(\sigma,t)=|g'_{\chi}(\sigma+it)|^2\beta_{\mu}(\sigma)d\sigma\frac{dt}{1+t^2},$$
is well defined in the right half-plane $\C_+$. Observe that 
\[
d\nu_{g_{\varepsilon},\chi}(\sigma,t)\not=d\nu_{g,\chi}(\varepsilon+\sigma,t),\quad\varepsilon>0.
\]
Let us also consider the measure $\mu_{g,\chi}$, given by
$$d\mu_{g,\chi}(\sigma,t)=|g'_{\chi}(\sigma+it)|^2\sigma d\sigma\frac{dt}{1+t^2},$$
which is also well defined in the right half-plane $\C_+$ for almost every $\chi\in\T^{\infty}$ (see \cite[Theorem 6]{bayart2}).

\smallskip

The case $p=2$ of the Theorem follows easily from Proposition \ref{LP} and that $(T_gf)'=fg'$.

\
Now, we treat the case $p>2$. Take a Dirichlet polynomial $f$. Thanks to \eqref{claimg}, we can apply the Littlewood-Paley identity (\textrm{i.e.} Proposition \ref{LP}) to $(T_g(f))_\delta$ for any $\delta>0$, which gives, together with the observation in \eqref{opertransl}:
$$\|(T_gf)_\delta\|_{\mathcal{A}_{\mu}^p}^p\approx
   \int_{\T^{\infty}}\int_{\R}\int_0^{\infty}\!|(T_gf)_{\delta,\chi}(\sigma+it)|^{p-2}|f_{\delta,\chi}(\sigma+it)|^2d\nu_{g_\delta,\chi}(\sigma,t)dm_{\infty}(\chi).$$

Now, from Hölder's inequality for the conjugate exponents $q=\frac{p}{p-2}$ and $q'=\frac p2$, we get
\begin{align}\label{holderLP}
    \|(T_gf)_\delta\|_{\mathcal{A}_{\mu}^p}^p&\lesssim
   \left(
\int_{\T^{\infty}}\int_{\R}\int_0^{\infty}|(T_gf)_{\delta,\chi}(\sigma+it)|^{p}d\nu_{g_\delta,\chi}(\sigma,t)dm_{\infty}(\chi)
   \right)^{\frac{p-2}{p}}\\
   &\times
   \left(\int_{\T^{\infty}}\int_{\R}\int_0^{\infty}
|f_{\delta,\chi}(\sigma+it)|^pd\nu_{g_\delta,\chi}(\sigma,t)dm_{\infty}(\chi)
   \right)^{\frac2p}\notag.
\end{align}
Now, point out that when $\psi\in\mathcal{A}_{\mu}^p$, since $\beta_h$ is non-decreasing, for every $\delta>0$:
\begin{align*}
\int_{\T^{\infty}}&\int_{\R}\int_0^{\infty}
\!|\psi_{\delta,\chi}(\sigma+it)|^p d\nu_{g_\delta,\chi}(\sigma,t)dm_{\infty}(\chi)\\
&\leq\int_{\T^{\infty}}\!\int_{\R}\int_0^{\infty}
\!|\psi_{\chi}(\delta+\sigma+it)|^p|g'_{\chi}(\delta+\sigma+it)|^2\beta_h(\delta+\sigma)d\sigma\frac{dt}{1+t^2}dm_{\infty}(\chi)\\
&=\int_{\T^{\infty}}\int_{\R}\int_\delta^{\infty}\,
|\psi_{\chi}(u+it)|^p|g'_{\chi}(u+it)|^2\beta_h(u)du\frac{dt}{1+t^2}dm_{\infty}(\chi).
\end{align*}
Having this, an application of condition \eqref{carlesontype} gives
\begin{align}\label{observation}
\int_{\T^{\infty}}\int_{\R}\int_0^{\infty}
|\psi_{\delta,\chi}(\sigma&+it)|^p d\nu_{g_\delta,\chi}(\sigma,t)dm_{\infty}(\chi)\leq C(g,p)^2 \|\psi\|_{\mathcal{A}_{\mu}^p}^p.
\end{align}
Applying inequality \eqref{observation} to each integral in \eqref{holderLP}, we get
$$
\|(T_gf)_\delta\|_{\mathcal{A}_{\mu}^p}^p\lesssim C(g,p)^2\|T_gf\|_{\mathcal{A}_{\mu}^p}^{p-2}\|f\|_{\mathcal{A}_{\mu}^p}^{2}.
$$
Taking the limit when $\delta\to0^+$, and, since $T_g(f)\in\mathcal{A}^p_{\mu}$, rearranging, we obtain 
$$\|T_gf\|_{\mathcal{A}_{\mu}^p}\lesssim C(g,p)\|f\|_{\mathcal{A}_{\mu}^p}.
$$
Then, for this range of $p$, the result follows and $\|T_g\|_{\mathcal{L}(\mathcal{A}^p_{\mu})}\lesssim C(g,p)$.
\medskip

Suppose now that $1\leq p<2$. It suffices again to prove the theorem for Dirichlet polynomials $f$ and extend the result by a density argument. 

Recall $T_g(f)\in\mathcal{A}^p_\mu$ and we know from Lemma \ref{normApineg} that
    \begin{equation}\label{lobo}
          \|T_gf\|_{\mathcal{A}^p_{\mu}}^p
   =
\int_0^{\infty}\|(T_g(f))_{\delta}\|_{\mathcal{H}^p}^pd\mu(\delta).  
    \end{equation}
    We begin by using Theorem \ref{breper1} and the definition of $f^*$ so that, setting $s=\sigma+it$
\begin{align*}
\|(T_gf)_{\delta}\|_{\mathcal{H}^p}^p
&\approx\int_{\T^{\infty}}\int_{\R}\left(\int_{\Gamma_{\tau}}|f_{\chi}(\delta+s)|^2|g_{\chi}'(\delta+s)|^2d\sigma dt
    \right)^{\frac p2}\frac{d\tau}{1+\tau^2}dm_{\infty}(\chi)\\
&=\int_{\T^{\infty}}\int_{\R}\left(\int_{\Gamma_{\tau}}
    |f_{\delta,\chi}(s)|^{2-p}|f_{\delta,\chi}(s)|^{p}|g_{\delta,\chi}'(s)|^2d\sigma dt
    \right)^{\frac p2}\frac{d\tau}{1+\tau^2}dm_{\infty}(\chi)\\
    &\leq 
    \int_{\T^{\infty}}\int_{\R}((f_{\delta})_{\chi}^*(\tau))^{\frac{(2-p)p}{2}}\left(\int_{\Gamma_{\tau}}
   |f_{\delta,\chi}(s)|^{p}|g_{\delta,\chi}'(s)|^2d\sigma dt
    \right)^{\frac p2}\frac{d\tau}{1+\tau^2}dm_{\infty}(\chi).
\end{align*}
Now, we make use of Hölder's inequality with conjugate exponents $q=\frac2p$ and $q'=\frac{2}{2-p}$. Then, applying Theorem \ref{breper} to the first integral below, we obtain the following:
\vspace{-0.1cm}
\begin{align}\label{quess}
\|\notag(T_gf)_{\delta}\|_{\mathcal{H}^p}^p\notag&\lesssim\left(\int_{\T^{\infty}}\int_{\R}|(f_{\delta})_{\chi}^*(\tau)|^p\frac{d\tau}{1+\tau^2}dm_{\infty}(\chi)
      \right)^{\frac{2-p}{2}}
      \\\notag
      &\times \left(\int_{\T^{\infty}}\int_{\R}\int_{\Gamma_{\tau}}\!
       |f_{\delta,\chi}(s)|^{p}|g_{\delta,\chi}'(s)|^2d\sigma dt\frac{d\tau}{1+\tau^2}dm_{\infty}(\chi)
      \right)^{\frac p2}\\
      &\lesssim \|f_{\delta}\|_{\mathcal{H}^p}^{\frac{(2-p)p}{2}}\!
\left(\int_{\T^{\infty}}\int_{\R}\int_{\Gamma_{\tau}}\!
       |f_{\delta,\chi}(s)|^{p}|g_{\delta,\chi}'(s)|^2d\sigma dt\frac{d\tau}{1+\tau^2}dm_{\infty}(\chi)
      \right)^{\frac p2}
    .
 \end{align}
Let $\mathbb{S}=\{s\in\C:0<\text{Re}(s)<1\}$. We shall split the estimate of the triple integral in two parts: one on $\Gamma_{\tau}\cap\mathbb{S}$ and another one on $\Gamma_{\tau}\setminus\mathbb S$. Hence, for the first term (the one on $\Gamma_{\tau}\cap\mathbb{S}$), applying Lemma \ref{lematecnraro}, we obtain
\begin{align*}
\int_{\T^{\infty}}\int_{\R}\int_{\Gamma_{\tau}\cap\,\mathbb{S}}  & |f_{\delta,\chi}(\sigma+it)|^{p}|g_{\delta,\chi}'(\sigma+it)|^2d\sigma dt\frac{d\tau}{1+\tau^2}dm_{\infty}(\chi)\\& 
\lesssim \int_{\T^{\infty}}\int_{\R}\int_0^{\infty}
|f_{\delta,\chi}(\sigma+it)|^{p}|g'_{\delta,\chi}(\sigma+it)|^2\sigma d\sigma\frac{dt}{1+t^2}dm_{\infty}(\chi).
\end{align*}
Carrying out the change of variables $\delta+\sigma=u$ in the latter integral we find that
\begin{align}\label{escandalo1}
\int_{\T^{\infty}}&\int_{\R}\int_{\Gamma_{\tau}\cap\,\mathbb{S}}   |f_{\delta,\chi}(\sigma+it)|^{p}|g_{\delta,\chi}'(\sigma+it)|^2d\sigma dt\frac{d\tau}{1+\tau^2}dm_{\infty}(\chi))
      \\\notag
&\lesssim\int_{\T^{\infty}}\int_{\R}\int_{\delta}^{\infty}(u-\delta)
      |f_{\chi}(u+it)|^{p}|g_{\chi}'(u+it)|^2du\frac{dt}{1+t^2}dm_{\infty}(\chi)=:M_1.
\end{align}
Regarding the term on $\Gamma_{\tau}\setminus\mathbb S$, 
\begin{align*}
\int_{\T^{\infty}} \int_{\R}&\int_{\Gamma_{\tau}\setminus\mathbb S}|f_{\delta,\chi}(\sigma+it)|^{p}|g_{\delta,\chi}'(\sigma+it)|^2d\sigma dt\frac{d\tau}{1+\tau^2}dm_{\infty}(\chi)\\
&\leq\|\delta_1\|_{(\mathcal{A}^p_{\mu})^*}\|f\|_{\mathcal{A}^p_{\mu}}^p
\int_{\T^{\infty}}\int_{\R}\int_{\Gamma_{\tau}\setminus\mathbb S} |g_{\delta,\chi}'(\sigma+it)|^2d\sigma dt\frac{d\tau}{1+\tau^2}dm_{\infty}(\chi).
\end{align*}
Now, we make use of, first, Lemma \ref{loco2} and then Lemma \ref{loco1} (without loss of generality we can suppose $g(+\infty)=0$, since $g'=(g-g(+\infty))')$, to get
\begin{align*}
\int_{\T^{\infty}}\int_{\R}\int_{\Gamma_{\tau}\setminus\mathbb S} &|g_{\delta,\chi}'(\sigma+it)|^2d\sigma dt\frac{d\tau}{1+\tau^2}dm_{\infty}(\chi)\\&
 \lesssim
 \|g_{1/6}\|_{\mathcal{H}^2}^2
\int_{\T^{\infty}}\int_{\R}\int_{\Gamma_{\tau}\setminus\mathbb S}2^{-\sigma}dtd\sigma \frac{d\tau}{1+\tau^2}dm_{\infty}(\chi)
\\&
=\|g_{1/6}\|_{\mathcal{H}^2}^2
\int_1^{\infty}\pi\sigma2^{-\sigma}d\sigma.
\\
&\lesssim I(g)^2\leq C(g,p)^2
.
\end{align*}
Summing up, for the integral on $\Gamma_{\tau}\setminus\mathbb S$, we have shown that
\begin{align}\label{escandalo2}
    \int_{\T^{\infty}} \int_{\R}&\int_{\Gamma_{\tau}\setminus\mathbb S}|f_{\delta,\chi}(\sigma+it)|^{p}|g_{\delta,\chi}'(\sigma+it)|^2d\sigma dt\frac{d\tau}{1+\tau^2}dm_{\infty}(\chi)\\\notag
&\lesssim
\|f\|_{\mathcal{A}^p_{\mu}}^pC(g,p)^2.
\end{align}
Putting together the estimates \eqref{escandalo1} and \eqref{escandalo2} in \eqref{quess}, we have proven that
\begin{align*}
     \|(T_gf)_{\delta}\|_{\mathcal{H}^p}^p&\lesssim
     \|f_{\delta}\|_{\mathcal{H}^p}^{\frac{(2-p)p}{2}}( M_1+\|f\|_{\mathcal{A}^p_{\mu}}^pC(g,p)^2 )^{p/2}\\&
      \leq
       \|f_{\delta}\|_{\mathcal{H}^p}^{\frac{(2-p)p}{2}}M_1^{p/2}+\|f\|_{\mathcal{A}^p_{\mu}}^{p}C(g,p)^p.
\end{align*}
Then, we integrate on both sides with respect to the measure $\mu$ and using \eqref{lobo}, we obtain
\begin{align*}
    \|&T_gf\|_{\mathcal{A}^p_{\mu}}^p\\&
    \leq 
    \int_0^{\infty}
    \|f_{\delta}\|_{\mathcal{H}^p}^{\frac{(2-p)p}{2}}\left(\int_{\T^{\infty}}\int_{\R}\int_0^{\infty}
     \! |f_{\chi}(\delta+\sigma+it)|^{p}d\mu_{g_{\delta},\chi}(\sigma,t)dm_{\infty}(\chi)
      \right)^{\frac p2}d\mu(\delta)\\
      &+\|f\|_{\mathcal{A}^p_{\mu}}^{p}C(g,p)^p.
\end{align*}
Using Hölder's inequality with respect to the measure $\mu$ with the same conjugate exponents, $q=\frac2p$ and $q'=\frac{2}{2-p}$, we find that
  \begin{align}\label{cuervo}
&\notag\|T_gf\|_{\mathcal{A}^p_{\mu}}^p\lesssim \|f\|_{\mathcal{A}_{\mu}^p}^{\frac{(2-p)p}{2}}
    \\\notag&\times  \left(\int_0^{\infty}\!\!
\int_{\T^{\infty}}\int_{\R}\int_{\delta}^{\infty}\!(u-\delta)
      |f_{\chi}(u+it)|^{p}|g_{\chi}'(u+it)|^2du\frac{d\tau}{1+\tau^2}dm_{\infty}(\chi)
   d\mu(\delta)  \right)^{\frac{p}{2}}\\
   &+\|f\|_{\mathcal{A}^p_{\mu}}^{p}C(g,p)^p
   \!.
\end{align}
Applying Fubini in the integral and condition \eqref{carlesontype}, it follows that
\begin{align*}
\int_{\T^{\infty}}&\int_{\R}\int_0^{\infty}\int_{0}^{u}(u-\delta)d\mu(\delta)
      |f_{\chi}(u+it)|^{p}|g_{\chi}'(u+it)|^2du\frac{d\tau}{1+\tau^2}dm_{\infty}(\chi)\\
    &= \int_{\T^{\infty}}\int_{\R}\int_0^{\infty}\beta_{\mu}(u)
      |f_{\chi}(u+it)|^{p}|g_{\chi}'(u+it)|^2du\frac{d\tau}{1+\tau^2}dm_{\infty}(\chi)\\
      &
      \lesssim C(g,p)^2 \|f\|_{\mathcal{A}_{\mu}^p}^p
      .
   \end{align*}
Plugging this estimate in \eqref{cuervo}:
\begin{align*}
    \|T_gf\|_{\mathcal{A}^p_{\mu}}^p&\lesssim C(g,p)^p\|f\|_{\mathcal{A}_{\mu}^p}^{\frac{(2-p)p}{2}}\|f\|_{\mathcal{A}_{\mu}^p}^{\frac{p^2}{2}}+\|f\|_{\mathcal{A}^p_{\mu}}^{p}C(g,p)^p\\&=C(g,p)^p\|f\|_{\mathcal{A}_{\mu}^p}^p+\|f\|_{\mathcal{A}^p_{\mu}}^{p}C(g,p)^p,
    \end{align*}
which gives the sufficiency and that $\|T_g\|_{\mathcal{L}(\mathcal{A}^p_{\mu})^*}\lesssim C(g,p)$ for the remaining range of $p$.

\
Suppose now that $T_g$ acts boundedly on $\mathcal{A}_{\mu}^p$ for $p>2$. Notice that the case $p=2$ follows from applying Proposition \ref{LP} to $T_gf$ and then using the boundedness of $T_g$ on $\mathcal{A}^2_{\mu}$. Therefore, we assume that $T_g$ is bounded on $\mathcal{A}_{\mu}^p$ for $p>2$. For $\delta>0$, we shall split the proof into two estimates: one on
\[
I_1:=
\int_{\T^{\infty}}\int_{\R}\int_0^1|f_{\chi}(\delta+\sigma+it)|^pd\mu_{g_{\delta},\chi}(\sigma,t)dm_{\infty}(\chi);
\]
and the other one on
\[
I_2
:=
\int_{\T^{\infty}}\int_{\R}\int_1^{\infty}|f_{\chi}(\delta+\sigma+it)|^pd\mu_{g_{\delta},\chi}(\sigma,t)dm_{\infty}(\chi).
\]
For the first one, we begin by using \eqref{lemeq1} from Lemma \ref{lematecnraro}. Define again $\mathbb{S}=\{s\in\C:0<\text{Re}(s)<1\}$. Observe also that $(T_gf)'=fg'$. Hence, for $f\in\mathcal{A}^p_{\mu}$
\begin{align*}
I_1&=\int_{\T^{\infty}}\int_{\R}\int_0^1|f_{\chi}(\delta+\sigma+it)|^pd\mu_{g_{\delta},\chi}(\sigma,t)dm_{\infty}(\chi)\\
    &
\approx\int_{\T^{\infty}}\int_{\R}\int_{\Gamma_{\tau}\cap\,\mathbb{S}}|f_{\chi}(\delta+\sigma+it)|^p|g'_{\chi}(\delta+\sigma+it)|^2d\sigma dt\frac{d\tau}{1+\tau^2}dm_{\infty}(\chi)\\
&\leq\int_{\T^{\infty}}\int_{\R}(f^*_{\delta,\chi}(\tau))^{p-2}\int_{\Gamma_{\tau}}|(T_gf)'_{\chi}(\delta+\sigma+it)|^2d\sigma dt\frac{d\tau}{1+\tau^2}dm_{\infty}(\chi),
\end{align*}
where in the last inequality we have used the definition of the non-tangential maximal function $f^*$ (see Definition \ref{sqmax}).
We obtain the first estimate in a similar fashion as we did for the converse implication. Indeed, we first apply Hölder's inequality for $q=\frac{p}{p-2}$ and $q'=\frac p2$ and we conclude using Theorem \ref{breper} for the first integral below and Theorem \ref{breper1} for the second one below:
\begin{align*}
\int_{\T^{\infty}}\int_{\R}&(f^*_{\delta,\chi}(\tau))^{p-2}\left(\int_{\Gamma_{\tau}}|(T_gf)'_{\chi}(\delta+\sigma+it)|^2d\sigma dt\right)\frac{d\tau}{1+\tau^2}dm_{\infty}(\chi)
   \\
    &\leq \left(\int_{\T^{\infty}}\int_{\R}|f_{\delta,\chi}^*(\tau)|^p\frac{d\tau}{1+\tau^2}dm_{\infty}(\chi)\right)^{\frac{p-2}{p}}\\
    &\times
    \left(
    \int_{\T^{\infty}}\int_{\R}\left(\int_{\Gamma_{\tau}}
    |(T_gf)'_{\chi}(\delta+\sigma+it)|^2d\sigma dt
    \right)^{\frac{p}{2}}
    \frac{d\tau}{1+\tau^2}dm_{\infty}(\chi)\right)^{\frac2p}\\
    &\lesssim\|f_{\delta}\|_{\mathcal{H}^p}^{p-2}\|(T_gf)_{\delta}\|_{\mathcal{H}^p}^2.
 \end{align*}
 Summing up,
 \begin{equation}\label{I1}
     I_1\lesssim \|f_{\delta}\|_{\mathcal{H}^p}^{p-2}\|(T_gf)_{\delta}\|_{\mathcal{H}^p}^2.
 \end{equation}
We now move on to the integral $I_2$. Using that $\sigma\leq \sigma+\delta$, $\delta>0$, and making the change of variables $u=\sigma+\delta$, 
 \begin{align*}
I_2&=\int_{\T^{\infty}}\int_{\R}\int_1^{\infty}|f_{\chi}(\delta+\sigma+it)|^p|g_{\chi}'(\delta+\sigma+it)|^2\sigma d\sigma\frac{dt}{1+t^2}dm_{\infty}(\chi)\\
&\leq\int_{\T^{\infty}}\int_{\R}\int_{1+\delta}^{\infty}|f_{\chi}(u+it)|^p|g_{\chi}'(u+it)|^2u du\frac{dt}{1+t^2}dm_{\infty}(\chi)
\\&\leq\int_{\T^{\infty}}\int_{\R}\int_{1}^{\infty}|f_{\chi}(u+it)|^p|g_{\chi}'(u+it)|^2u du\frac{dt}{1+t^2}dm_{\infty}(\chi).
\end{align*}
By hypothesis, $f\in\mathcal{A}_{\mu}^p$, $p>2$. Then, by the boundedness of the evaluation functional in $\mathcal{A}^p_{\mu}$, we find that
 \[
 |f(\sigma+it)|^p\leq C\|f\|_{\mathcal{A}^p_{\mu}}^{p},\quad \sigma\geq1,
 \]
 with $C>0$ and absolute constant. Let $a_1=g(+\infty)$ be the first coefficient of the Dirichlet series. By the boundedness of $T_g$ on $\mathcal{A}_{\mu}^p$, we have that $g-a_1$ is in $\mathcal{A}^p_{\mu}$ too. In this case, we use Lemma \ref{loco2} and the inclusion between the $\mathcal{A}^p_{\mu}$-spaces ($p>2$), so that 
 \begin{equation*}\label{DeltaH2}
|g'(\sigma+it)|^2\lesssim2^{-\sigma}\|g_{1/6}-a_1\|_{\mathcal{H}^2}^2\lesssim2^{-\sigma}\|g-a_1\|_{\mathcal{A}^2_{\mu}}^2
\leq2^{-\sigma}\|g-a_1\|_{\mathcal{A}^p_{\mu}}^2,\quad \sigma\geq1.
 \end{equation*}
 Then, these observations yield
 \begin{align*}
I_2&\leq\int_{\T^{\infty}}\int_{\R}\int_{1}^{\infty}|f_{\chi}(u+it)|^p|g_{\chi}'(u+it)|^2u du\frac{dt}{1+t^2}dm_{\infty}(\chi)\\ 
&\lesssim
\|f\|_{\mathcal{A}_{\mu}^p}^p\|g-a_1\|_{\mathcal{A}^p_{\mu}}^2
\int_{\T^{\infty}}\int_{\R}\int_1^{\infty}u2^{-u} du\frac{dt}{1+t^2}dm_{\infty}(\chi)\\
&\lesssim\|f\|_{\mathcal{A}_{\mu}^p}^p\|g-a_1\|_{\mathcal{A}^p_{\mu}}^2.
 \end{align*}
Hence, using both \eqref{I1} and the latter estimate on $I_2$, we have that
 \begin{align*}
  I_1+I_2&=    \int_{\T^{\infty}}\int_{\R}\int_0^{\infty}|f_{\chi}(\delta+\sigma+it)|^p|g'_{\chi}(\delta+\sigma+it)|^2\sigma d\sigma\frac{dt}{1+t^2}dm_{\infty}(\chi)\\
      &\lesssim
      \|f_{\delta}\|_{\mathcal{H}^p}^{p-2}\|(T_gf)_{\delta}\|_{\mathcal{H}^p}^2
      +
\|f\|_{\mathcal{A}_{\mu}^p}^p\|g-a_1\|_{\mathcal{A}^p_{\mu}}^2.
 \end{align*}
We shall conclude in an identical way as in the sufficiency. We carry out the change of variables $u=\delta+\sigma$ in the integral $I_1+I_2$. 
 Then, we integrate with respect to the probability measure $\mu$ on both sides. In the left-hand side we will repeat the  argument from the sufficiency and in the right hand integral we use Hölder's inequality with $\alpha=\frac{p}{p-2}$ and $\alpha'=\frac{p}{2}$ for the first term, so that
 \begin{align*}
   \int_{\T^{\infty}}\int_{\R}\int_0^{\infty}\int_{0}^{u}(u-\delta)&d\mu(\delta)
      |f_{\chi}(u+it)|^{p}|g_{\chi}'(u+it)|^2du\frac{dt}{1+t^2}dm_{\infty}(\chi)\\
      &\lesssim \int_0^{\infty}\|f_{\delta}\|_{\mathcal{H}^p}^{p-2}\|(T_gf)_{\delta}\|_{\mathcal{H}^p}^2d\mu(\delta)
      +
\|f\|_{\mathcal{A}_{\mu}^p}^p\|g-a_1\|_{\mathcal{A}_{\mu}^p}^2\\
&\lesssim \|f\|_{\mathcal{A}_{\mu}^p}^{p-2}\|T_gf\|_{\mathcal{A}_{\mu}^p}^{2}
 +
\|f\|_{\mathcal{A}_{\mu}^p}^p\|T_g\|_{\mathcal{L}({\mathcal{A}_{\mu}^p})}^2
 \end{align*}
where in the second term we have used that $\|g-g(+\infty)\|_{\mathcal{A}_{\mu,\infty}^p}\leq\|T_g\|_{\mathcal{L}({\mathcal{A}_{\mu}^p})}$, since $T_g1=g-g(+\infty)$ and the operator $T_g$ is bounded on $\mathcal{A}_{\mu}^p$. Eventually, using the boundedness of $T_g$ on $\mathcal{A}^p_{\mu}$, we find that
 \begin{align*}
    \int_{\T^\infty}\int_{\R}\int_0^{\infty}|f_{\chi}(\sigma+it)|^p|g_{\chi}'(\sigma+it)|^2&\beta_h(\sigma)d\sigma \frac{dt}{1+t^2}dm_{\infty}(\chi)\\&\lesssim
 \|f\|_{\mathcal{A}_{\mu}^p}^p\|T_g\|_{\mathcal{L}({\mathcal{A}_{\mu}^p})}^2.
  \end{align*}
 Taking the supremum over all $f\in\mathcal{A}^p_{\mu}$ with norm $1$ we obtain
 \[
 C(g,p)\lesssim \|T_g\|_{\mathcal{L}({\mathcal{A}_{\mu}^p})}.
 \]
 \vskip-0.5cm
  \end{proof}

\begin{proof}[Proof of Theorem \ref{carlesonberg}]
 The idea is to argue somehow as in the case of the unit disc. 
 Throughout the proof,  we may assume that we work with functions $f$  in $\mathcal{A}_{\mu}^p$ vanishing at infinity, i.e. their first Dirichlet coefficient is $0$. 
 Indeed, we can do this since, given $g$ in $\text{Bloch}_{\mu}(\C_+)$, we claim that 
 $$g\in\bigcap_{1\leq p<\infty}\mathcal{A}_{\mu}^p.$$
 Notice that  for any function $g\in\text{Bloch}_{\mu}(\C_+)$,  $$2\|g\|_{\text{Bloch}_{\mu}(\C_+)}\geq\|g-g(+\infty)\|_{\text{Bloch}_{\mu}(\C_+)}.$$
 Since $\sigma_b(g)\leq0$ (see Lemma \ref{born}), $g_{\delta}\in\mathcal{H}^{\infty}$ for every $\delta>0$ so, in particular, 
 $g_{\delta}\in\mathcal{H}^p$ and we can apply the Littlewood-Paley identity from Lemma \ref{LP} to $g_{\delta}$:
 \begin{align*}
     \|g_{\delta}\|_{\mathcal{A}_{\mu}^p}^p
    &\approx
\int_{\T^{\infty}}\!\!\int_{\R}\!\int_0^{\infty}\!\!\beta_{\mu}(\sigma)|g_{\chi}(\delta+\sigma+it)|^{p-2}|g'_{\chi}(\delta+\sigma+it)|^2d\sigma \frac{dt}{1+t^2}dm_{\infty}(\chi)\\
&=
\int_{\T^{\infty}}\!\!\int_{\R}\!\int_0^{1}\!\!\beta_{\mu}(\sigma)|g_{\chi}(\delta+\sigma+it)|^{p-2}|g'_{\chi}(\delta+\sigma+it)|^2d\sigma \frac{dt}{1+t^2}dm_{\infty}(\chi)
\\
&+
\int_{\T^{\infty}}\!\!\int_{\R}\!\int_1^{\infty}\!\!\beta_{\mu}(\sigma)|g_{\chi}(\delta+\sigma+it)|^{p-2}|g'_{\chi}(\delta+\sigma+it)|^2d\sigma \frac{dt}{1+t^2}dm_{\infty}(\chi)\\
&=:B_1+B_2.
\end{align*}
To estimate $B_1$, using both the definition of $\text{Bloch}_{\mu}(\C_+)$ and Lemma \ref{hp} gives,  for $p\geq3$,
\begin{align*}
    B_1
    &\leq 
\|g_{\delta}\|_{\text{Bloch}_{\mu}(\C_+)}^2\!\int_0^{1}\int_{\T^{\infty}}\int_{\R}\!\!|g_{\chi}(\delta+\sigma+it)|^{p-2} \frac{dt}{1+t^2}dm_{\infty}(\chi)h(\sigma)d\sigma\\&
\leq 
\|g\|_{\text{Bloch}_{\mu}(\C_+)}^2\int_0^{1}\|g_{\sigma+\delta}\|_{\mathcal{H}^{p-2}}^{p-2}h(\sigma)d\sigma.
    \end{align*}
Now, using that $h$ is the Radon-Nikodym derivative of the measure $\mu$ and the inclusion relations between the $\mathcal{A}^p_{\mu}$ spaces we obtain, for $p\geq3$,
\begin{align*}
    \|g_{\delta}\|_{\mathcal{A}_{\mu}^p}^p
    &\lesssim
\|g\|_{\text{Bloch}_{\mu}(\C_+)}^2
\int_0^{1}\|g_{\sigma+\delta}\|_{\mathcal{H}^{p-2}}^{p-2}h(\sigma)d\sigma\\
&\leq 
\|g\|_{\text{Bloch}_{\mu}(\C_+)}^2
\int_0^{1}\|g_{\sigma+\delta}\|_{\mathcal{H}^{p-2}}^{p-2}d\mu(\sigma)\\
&\leq \|g\|_{\text{Bloch}_{\mu}(\C_+)}^2\|g_{\delta}\|_{\mathcal{A}_{\mu}^{p-2}}^{p-2}\leq\|g\|_{\text{Bloch}_{\mu}(\C_+)}^2\|g_{\delta}\|_{\mathcal{A}_{\mu}^{p}}^{p-2}.
\end{align*}
Regarding $B_2$, since $g\in\text{Bloch}_{\mu}(\C_+)$, we have $g\in\mathcal{H}^{\infty}(\C_1)$. Recalling that $\beta_{\mu}(\sigma)\leq\sigma$, $\sigma>0$, we obtain, 
\begin{align*}
    B_2&=
\int_{\T^{\infty}}\!\!\int_{\R}\!\int_1^{\infty}\!\!\beta_{\mu}(\sigma)|g_{\chi}(\delta+\sigma+it)|^{p-2}|g'_{\chi}(\delta+\sigma+it)|^2d\sigma \frac{dt}{1+t^2}dm_{\infty}(\chi)\\&
\leq
\|g_{\delta}\|_{\mathcal{H}^{\infty}(\C_1)}^{p-2}
\int_{\T^{\infty}}\!\!\int_{\R}\!\int_1^{\infty}\sigma|g'_{\chi}(\delta+\sigma+it)|^2d\sigma \frac{dt}{1+t^2}dm_{\infty}(\chi)\\
&\leq 
\|g_{\delta}\|_{\mathcal{A}^p_{\mu}}^{p-2}
\int_{\T^{\infty}}\int_{\R}\int_1^{\infty}\sigma|g'_{\chi}(\delta+\sigma+it)|^2d\sigma \frac{dt}{1+t^2}dm_{\infty}(\chi).
\end{align*}
Now, since $g_{\delta}\in\mathcal{H}^p$, by Lemma \ref{prinul}, Cauchy inequality, and Lemma \ref{born}, we have, for $\sigma\geq1$,
\begin{align*}
    |g'_{\chi}(\delta+\sigma+it)|\lesssim 2^{-\sigma+1}\|g'_{\delta}\|_{\mathcal{H}^{\infty}(\C_1)}
    &\leq 2^{-\sigma+1}2\|g_{\delta}\|_{\mathcal{H}^{\infty}(\C_{1/2})}
    \\
    &\leq 4\cdot2^{-\sigma}\|g\|_{\mathcal{H}^{\infty}(\C_{1/2})}
    \\
    &\lesssim 4\cdot2^{-\sigma}\|g\|_{\text{Bloch}_{\mu}(\C_+)}.
    \end{align*}
Plugging this in the estimate for $B_2$
\begin{align*}
    B_2&\lesssim 
    \|g\|_{\text{Bloch}_{\mu}(\C_+)}^2\|g_{\delta}\|_{\mathcal{A}^p_{\mu}}^{p-2}
\int_{\T^{\infty}}\!\!\int_{\R}\!\int_1^{\infty}\sigma2^{-\sigma+1}d\sigma \frac{dt}{1+t^2}dm_{\infty}(\chi)\\
&\lesssim\|g\|_{\text{Bloch}_{\mu}^2(\C_+)}\|g_{\delta}\|_{\mathcal{A}^p_{\mu}}^{p-2}.
\end{align*}
Then, putting together the estimates on $B_1$ and $B_2$, since we know that $\|g_{\delta}\|_{\mathcal{A}_{\mu}^p}<\infty$, we have that
$$\|g_{\delta}\|_{\mathcal{A}_{\mu}^p}\lesssim\|g\|_{\text{Bloch}_{\mu}(\C_+)}, \quad \delta>0.$$
Letting $\delta\to0^+$, the embedding inequality follows for $p\geq3$ and \emph{a fortiori} for every $p\geq1$. 
This, together with Lemma \ref{a1} gives the claim.

\

Now, thanks to Theorem \ref{copiaux}, it suffices to prove \eqref{carlesontype}. Therefore, we estimate the following quantity, where $f\in\mathcal{H}^p$ and $\lim_{\text{Re}s\to+\infty}f(s)=0$:
    \begin{align*}
\int_{\T^{\infty}}\int_{\R}\int_0^{\infty}&\beta_{\mu}(\sigma)|f_{\chi}(\sigma+it)|^p|g'_{\chi}(\sigma+it)|^2d\sigma\frac{dt}{1+t^2}dm_{\infty}(\chi)\\
&
=\int_{\T^{\infty}}\int_{\R}\int_0^{1}\beta_{\mu}(\sigma)|f_{\chi}(\sigma+it)|^p|g'_{\chi}(\sigma+it)|^2d\sigma\frac{dt}{1+t^2}dm_{\infty}(\chi)\\
&
+\int_{\T^{\infty}}\int_{\R}\int_{1}^{\infty}\beta_{\mu}(\sigma)|f_{\chi}(\sigma+it)|^p|g'_{\chi}(\sigma+it)|^2d\sigma\frac{dt}{1+t^2}dm_{\infty}(\chi)\\
&
=: I_1+I_2.
\end{align*}
We begin by estimating $I_2$. Using the definition of the Bloch space $\text{Bloch}_{\mu}(\C_+)$ and Lemma \ref{propbloch} $a)$, we obtain the following:
\begin{align*}
\int_{\T^{\infty}}&\int_{\R}\int_{1}^{\infty}\beta_{\mu}(\sigma)|f_{\chi}(\sigma+it)|^p|g'_{\chi}(\sigma+it)|^2d\sigma\frac{dt}{1+t^2}dm_{\infty}(\chi)\\
&
\leq
\int_{\T^{\infty}}\int_{\R}\int_{1}^{\infty}\beta_{\mu}(\sigma)|f_{\chi}(\sigma+it)|^p\sup_{\substack{\sigma=1\\ t\in\R}}|g'_{\chi}(\sigma+it)|^2
d\sigma\frac{dt}{1+t^2}dm_{\infty}(\chi)\\
&\leq
\frac{1}{\omega(1)^2}\|g\|_{\text{Bloch}_{\mu}(\C_+)}^2
\int_{\T^{\infty}}\int_{\R}\int_{1}^{\infty}\beta_{\mu}(\sigma)|f_{\chi}(\sigma+it)|^p
d\sigma\frac{dt}{1+t^2}dm_{\infty}(\chi)\\
&=\frac{1}{\omega(1)^2}\|g\|_{\text{Bloch}_{\mu}(\C_+)}^2\int_1^{\infty}\|f_{\sigma}\|_{\mathcal{H}^p}^p\beta_{\mu}(\sigma)d\sigma,
\end{align*}
where the last identity follows from Fubini and Lemma \ref{hp}. Now, applying again Fubini as well as the $\mathcal{H}^p$ version of  Lemma \ref{prinul} to $f_{\sigma}=(f_u)_{\sigma-u}$.
\begin{align*}
\int_1^{\infty}\|f_{\sigma}\|_{\mathcal{H}^p}^p\beta_{\mu}(\sigma)d\sigma
&=
\int_1^{\infty}\|f_{\sigma}\|_{\mathcal{H}^p}^p\int_0^{\sigma}(\sigma-u)d\mu(u)d\sigma\\
&
=
\int_0^{\infty}\int_{\sigma>\max(u,1)}\|f_{\sigma}\|_{\mathcal{H}^p}^p(\sigma-u)d\sigma d\mu(u)\\
&\leq 
\int_0^{\infty}\|f_u\|_{\mathcal{H}^p}^p\left(\int_{\sigma>\max(u,1)}2^{-(\sigma-u)}(\sigma-u)d\sigma \right)d\mu(u)\\
&\lesssim 
\int_0^{\infty}\|f_u\|_{\mathcal{H}^p}^pd\mu(u),
\end{align*}
because
\[
I(u)=\int_{\sigma>\max(u,1)}2^{-(\sigma-u)}(\sigma-u)d\sigma 
\] is uniformly bounded with respect to $u$. Since $f\in\mathcal{H}^p$, Lemma \ref{normApineg}, we obtain
\[
I_2\lesssim\int_0^{\infty}\|f_u\|_{\mathcal{H}^p}^ph(u)du\lesssim\int_0^{\infty}\|f_u\|_{\mathcal{H}^p}^pd\mu(u)=\|f\|_{\mathcal{A}^p_{\mu}}^p,
\]
where in the second inequality we have used again that $h$ is the Radon-Nikodym derivative of the measure $\mu$. 

\ 
Now, regarding $I_1$, by both the definition of $\text{Bloch}_{\mu}(\C_+)$ and Lemma \ref{hp}, we have that
\begin{align*}
\int_{\T^{\infty}}\int_{\R}\int_0^{1}&\beta_{\mu}(\sigma)|f_{\chi}(\sigma+it)|^p|g'_{\chi}(\sigma+it)|^2d\sigma\frac{dt}{1+t^2}dm_{\infty}(\chi)\\
&
\leq
\|g\|_{\text{Bloch}_{\mu}(\C_+)}^2
\int_{\T^{\infty}}\int_{\R}\int_0^{1}|f_{\chi}(\sigma+it)|^2h(\sigma)d\sigma\frac{dt}{1+t^2}dm_{\infty}(\chi)\\
&
=
\|g\|_{\text{Bloch}_{\mu}(\C_+)}^2\int_0^1\|f_{\sigma}\|_{\mathcal{H}^p}^ph(\sigma)d\sigma\\
&\leq \|g\|_{\text{Bloch}_{\mu}(\C_+)}^2\|f\|_{\mathcal{A}^p_{\mu}}^p
\end{align*}
where in the last inequality we are again using that $h$ is the Radon-Nikodym derivative of $\mu$. 

Hence, we get inequality \eqref{carlesontype} when $f\in\mathcal{H}^p$. When $f\in\mathcal{A}^p_\mu$, thanks to Lemma \ref{horiz}, we have the inequality \eqref{carlesontype} satisfied by $f_\delta$ for every $\delta>0$. Here we can apply Fatou's lemma and get the inequality for $f$ itself. Applying Theorem \ref{copiaux}, the conclusion follows for every $f\in\mathcal{A}^p_{\mu}$. Moreover, we have that
\[
C(g,p)\lesssim\|g\|_{\text{Bloch}_{\mu}(\C_+)},
\]
which, again by Theorem \ref{copiaux}, gives 
\[
\|T_g\|_{\mathcal{L}(\mathcal{A}_{\mu}^p)}\lesssim \|g\|_{\text{Bloch}_{\mu}(\C_+)}.
\]
\vskip-0.5cm
\end{proof}

Notice that Example \ref{ejemplomeasure} shows that Theorem \ref{carlesonberg} actually improves the somehow expected sufficient condition $g\in\text{Bloch}(\C_+)$.

\ 
The dependence of the sufficient condition on the measure $\mu$ is somehow expected. Our second example comes to confirm this fact.
\begin{example}\label{ejemplofuerte}
There exist both:
\begin{itemize}
\item[i)] an admissible measure $\mu$ so that $\emph{Bloch}_{\mu}(\C_+)\subsetneq\emph{Bloch}(\C_+)$, and
\item[ii)] a Dirichlet series $g$ in $\emph{Bloch}(\C_+)\setminus\emph{Bloch}_{\mu}(\C_+)$ such that $T_g$ is not bounded in $\mathcal{A}^2_{\mu}$.
\end{itemize}
\end{example}
The idea in order to have the first condition satisfied is to choose a measure satisfying the $H_2$-condition. Regarding the second condition, we will consider a Dirichlet series $g$ supported on a lacunary sequence of primes so that condition \eqref{condan} from Lemma \ref{belongcoef} holds. The non-membership of $g$ to the space $\text{Bloch}_{\mu}(\C_+)$ will be ensured by the fact that $T_g1=g$ will fail to belong to $\mathcal{A}^2_{\mu}$. This will give the desired example since $g$ will neither belong to $\text{Bloch}_{\mu}(\C_+)$ since, otherwise, by Theorem \ref{carlesonberg}, $T_g$ would be bounded on $\mathcal{A}^2_{\mu}$.

\ 
Consider the probability measure
\[
d\mu(\sigma)=h(\sigma)\chi_{(0,1]}(\sigma)=\frac{1}{\sigma\log^2\left(\frac{{\rm e}}{\sigma}\right)}\chi_{(0,1]}(\sigma).
\]
This measure satisfies the $H_2$-condition. Indeed, for $x\in[\sigma/2,\sigma]$, $\sigma\in(0,1]$, we have that
\[
h(x)\geq \frac{1}{\sigma\log^2\left(\frac{2{\rm e}}{\sigma}\right)}\gtrsim\frac{1}{\sigma\log^2\left(\frac{{\rm e}}{\sigma}\right)}\cdot
\]
Then, by Proposition \ref{inclusion1}, $\text{Bloch}_{\mu}(\C_+)\subset\text{Bloch}(\C_+)$. It remains to choose the Dirichlet series $g$ conveniently in order to have $g\not\in\mathcal{A}^2_{\mu}$. To this purpose, set $x_j=2^{2^j}$. Clearly, $x_j^2=x_{j+1}$. We let $I_j=[x_{j},x_{j+1}]$. Let $p_j=\min I_j\cap\mathbb{P}$, with $\mathbb{P}$ the collection of prime numbers, and let $\mathcal{P}_0$ be the collection of all such primes. We take as sequence of coefficients the one given by $a_n=\chi_{\mathcal{P}_0}(n)$. We thus obtain an infinite sequence constantly equal to $1$, which clearly fails to be in $\ell^2$. More precisely, let
\[
g(s)=\sum_{p\in\mathcal{P}_0}p^{-s}.
\]
Nonetheless, as
\begin{align*}
    \sum_{x\leq n\leq x^2}a_n\leq \sum_{x_j\leq n\leq x_{j+2}}a_n\leq 2,
\end{align*}
we have that $g\in\text{Bloch}(\C_+)$. Now,
\begin{align*}
\|g\|_{\mathcal{A}^2_{\mu}}^2=\int_0^{\infty}\|g_{\sigma}\|_{\mathcal{H}^2}^2d\mu(\sigma)
\gtrsim
\int_0^{+\infty}\sum_{j=1}^{\infty}2^{-2^{j+1}\sigma}d\mu(\sigma)
&\approx
    \int_0^{\infty}\sum_{n\geq1}\frac{2^{-n\sigma}}{n}d\mu(\sigma)\\
    &
    =-\int_0^{\infty}\log(1-2^{-\sigma})d\mu(\sigma)\\
    &
\approx\int_0^1\frac{d\sigma}{\sigma\log\left( \frac{{\rm e}}{\sigma}\right)}=\infty.
\end{align*}
Hence, $T_g$ is not bounded and, thus, $g\not\in\text{Bloch}_{\mu}(\C_+)$, as desired.
\begin{remark}
Clearly, if a symbol $g$ defines a bounded Volterra operator $T_g$ on $\mathcal{H}^p$, then, by the definition of the $\mathcal{A}^p_{\mu}$-norm, $g$ will define a bounded Volterra operator on $\mathcal{A}^p_{\mu}$ too. Because of this, the sufficient condition from Theorem \ref{carlesonberg} provides an improvement of the sufficient condition $g\in\text{BMOA}(\C_+)$ from \cite{brevig}. Indeed, the symbol $g=\sum_{p\in\mathcal{P}_0}p^{-s}$ from Example \ref{ejemplofuerte} is in  $\text{Bloch}(\C_+)$. However, by \cite[Theorem 5.5]{brevig}, the operator $T_g$ is not bounded in $\mathcal{H}^2$. Therefore, $g$ is not in $\text{BMOA}(\C_+)$ by Theorem \cite[Theorem 2.3]{brevig}.

\end{remark}

\subsection{The necessity matter}
In this section, we will restrict ourselves to the spaces $\mathcal{A}^p_{\alpha}$. This is, $\mu=\mu_{\alpha}$, with $\mu_{\alpha}$ the density measures from \eqref{macaden1}. We shall provide a necessary condition for the boundedness of the Volterra operator $T_g$ acting on the spaces $\mathcal{A}^p_{\alpha}$ (see Theorem \ref{necessity}). The proof of the result is based on a lower estimate of the norm of the evaluation functionals  $\delta_s$, $s\in\C_{1/2}$, on $\mathcal{A}^p_{\alpha}$ done in \cite[Theorem 5.2]{GLQ2} and the upper estimate of the evaluations of the derivatives $\Delta_s$ from Proposition \ref{funder}, which has its own interest.

\ 
Let $f$ be a function in $\mathcal{H}^p$, $1\leq p<\infty$. Consider $s\in\C_{1/2}$. We denote by $\Delta_s$ to the evaluation functional of $f'$ at the point $s$. More precisely, $\Delta_{s}(f)=f'(s)$, $f\in\mathcal{H}^p$.

   Using that the monomials form a orthonormal basis in $\mathcal{H}^2$, it is easy to see that 
   \[
   \|\Delta_s\|_{(\mathcal{H}^2)^*}=(\zeta''(2\text{Re}(s)))^{\frac12},
   \]
which, when $\text{Re}(s)\to\frac12^+$ (see \cite[Theorem 12.22]{apostol}), gives
\[
 \|\Delta_s\|_{(\mathcal{H}^2)^*}\lesssim(2\sigma-1)^{-3/2}.
\]
This can be done thanks to the definition of the $\mathcal{H}^2$-norm, which allows to compute the norm in terms of the coefficients of the series. In the next result, we provide an upper estimate of the norm of $\Delta_s$ in $(\mathcal{H}^p)^*$, $s\in\C_{1/2}$, for the range $1\leq p<\infty$. 
\begin{prop}\label{funder}
Let $1\leq p<\infty$ and $s=\sigma+it$ in $\C_{1/2}$. Then, 
\begin{equation*}
    \|\Delta_s\|_{(\mathcal{H}^p)^*}\lesssim \left(\frac{1}{2\sigma-1}\right)^{\frac{p+1}{p}},\quad \sigma\in(\frac12,2].
\end{equation*}
\end{prop}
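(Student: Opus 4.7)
The plan is to combine Cauchy's integral formula for $f'(s)$ with the classical pointwise estimate for evaluations on $\mathcal{H}^p$. Recall that for every $1\leq p<\infty$ and every $w=\sigma'+it'$ with $\sigma'>1/2$, one has the (by now standard) bound
\[
\|\delta_w\|_{(\mathcal{H}^p)^*}\lesssim \zeta(2\sigma')^{1/p}\lesssim (2\sigma'-1)^{-1/p}
\]
as $\sigma'\to 1/2^+$. This follows from the Cole--Gamelin-type inequality on $\mathbb{T}^\infty$ applied to the Bohr lift; in particular, every $f\in\mathcal{H}^p$ has an analytic extension to $\mathbb{C}_{1/2}$ to which the inequality applies.

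Now fix $s=\sigma+it$ with $\sigma\in(1/2,2]$ and set $r=(2\sigma-1)/4$. The closed disc $\overline{D(s,r)}$ is contained in the half-plane $\mathbb{C}_{(2\sigma+1)/4}\subset\mathbb{C}_{1/2}$, so for every $f\in\mathcal{H}^p$ the Cauchy integral formula for the derivative yields
\[
f'(s)=\frac{1}{2\pi i}\oint_{|w-s|=r}\frac{f(w)}{(w-s)^2}\,dw,
\]
and hence $|f'(s)|\leq r^{-1}\max_{|w-s|=r}|f(w)|$.

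For every $w$ on the circle $|w-s|=r$ one has $\mathrm{Re}(w)\geq\sigma-r=(2\sigma+1)/4$, so $2\mathrm{Re}(w)-1\geq(2\sigma-1)/2$, and therefore
\[
\|\delta_w\|_{(\mathcal{H}^p)^*}\lesssim(2\sigma-1)^{-1/p}.
\]
Combining the two estimates with $1/r=4/(2\sigma-1)$,
\[
|f'(s)|\leq\frac{1}{r}\max_{|w-s|=r}\|\delta_w\|_{(\mathcal{H}^p)^*}\,\|f\|_{\mathcal{H}^p}\lesssim (2\sigma-1)^{-1}(2\sigma-1)^{-1/p}\|f\|_{\mathcal{H}^p},
\]
which is the claimed bound.

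The main (minor) obstacle is ensuring the pointwise inequality $|f(w)|\lesssim(2\mathrm{Re}(w)-1)^{-1/p}\|f\|_{\mathcal{H}^p}$ is quoted in the right form; once this is in hand, the argument is just Cauchy's formula with a radius $r$ tuned to balance the loss from the $1/r$ factor against the blow-up of $\|\delta_w\|_{(\mathcal{H}^p)^*}$ near the critical line. The restriction $\sigma\leq 2$ only serves to control the constants in the regime near $\sigma=1/2$; for $\sigma$ bounded away from $1/2$ the bound is trivial.
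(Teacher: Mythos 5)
Your proposal is correct and follows essentially the same route as the paper: Cauchy's estimate for $f'(s)$ on a circle of radius $\tfrac{1}{2}(\sigma-\tfrac12)=(2\sigma-1)/4$, combined with the Cole--Gamelin-type pointwise bound $|f(w)|\leq \zeta(2\,\mathrm{Re}(w))^{1/p}\|f\|_{\mathcal{H}^p}$ (the paper cites Bayart), which on that circle gives the loss $(2\sigma-1)^{-1/p}$ and hence the exponent $1+\tfrac1p$. No meaningful differences.
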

\begin{proof}
Let $s=\sigma+it$ belong to $\C_{1/2}$ and such that $\sigma<2$. Using Cauchy's inequality, we have that
    \begin{align*}
        |f'(s)|\leq \frac{1}{\frac12(\sigma-1/2)}\sup_{z\in\partial\D(s,\frac12(\sigma-1/2))}|f(z)|
    &\leq 
    \frac{4}{(2\sigma-1)}\sup_{\text{Re}(z)=\frac12(\sigma+1/2)}|f(z)|.
   \end{align*}
   Applying now that $|f(z)|\leq\|f\|_{\mathcal{H}^p}(\zeta(2\text{Re}(z)))^{\frac1p}$ from \cite[Theorem 3]{bayart2}, we have that
   \begin{align*}
       |f'(s)|\leq \frac{4}{(2\sigma-1)}\|f\|_{\mathcal{H}^p}(\zeta(\sigma+1/2))^{\frac1p}\lesssim 
       \left(\frac{8}{2\sigma-1}\right)^{\frac1p+1}\|f\|_{\mathcal{H}^p}.
   \end{align*}
   Taking the supremum over all 1-norm $\mathcal{H}^p$ functions, we obtain the desired result.
\end{proof}

\begin{teor}\label{necessity}
 Let $1\leq p<\infty$ and $\alpha>-1$. Suppose that $T_g:\mathcal{A}^p_{\alpha}\to \mathcal{A}^p_{\alpha}$ is bounded. Then, $g\in\emph{Bloch}(\C_{1/2})$.
\end{teor}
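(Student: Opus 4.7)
The plan is to probe $g$ by applying $T_g$ to carefully chosen test functions and reading off the derivative via the identity $(T_g f)'(s) = f(s) g'(s)$. Since $T_g 1 = g - g(\infty) \in \mathcal{A}^p_\alpha$, the value $g(1)$ is finite, so only the supremum $\sup_{s \in \C_{1/2}} (\sigma - 1/2)|g'(s)| < \infty$ needs attention. I focus on the delicate regime $s_0 = \sigma_0 + it_0$ with $\sigma_0 \in (1/2, 2)$ (the complementary range $\sigma_0 \geq 2$ is treated by the same scheme with a fixed translation $\eta = 1/2$, no delicate exponent tracking required).

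Fix $s_0 \in \C_{1/2}$ with $\sigma_0 \in (1/2, 2)$. By the lower estimate on the evaluation functional $\delta_{s_0}$ from \cite[Theorem 5.2]{GLQ2}, I pick a test function $f = f_{s_0} \in \mathcal{A}^p_\alpha$ whose normalised value at $s_0$ essentially realises this lower bound:
\begin{equation*}
\frac{|f(s_0)|}{\|f\|_{\mathcal{A}^p_\alpha}} \gtrsim (\sigma_0 - 1/2)^{-(\alpha+2)/p}.
\end{equation*}
Set $\eta = (\sigma_0 - 1/2)/2$, so that $\sigma_0 - \eta > 1/2$ and $2(\sigma_0 - \eta) - 1 = \sigma_0 - 1/2$. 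By the quantitative form of Lemma \ref{horiz} with $p = q$ and Remark \ref{remarko}, together with the explicit asymptotic $\mu_\alpha([0, \eta]) \approx \eta^{\alpha+1}$ valid for $\alpha > -1$,
\begin{equation*}
\|(T_g f)_\eta\|_{\mathcal{H}^p} \lesssim \frac{\|T_g f\|_{\mathcal{A}^p_\alpha}}{\mu_\alpha([0,\eta])^{1/p}} \lesssim \frac{\|T_g\|_{\mathcal{L}(\mathcal{A}^p_\alpha)} \|f\|_{\mathcal{A}^p_\alpha}}{(\sigma_0 - 1/2)^{(\alpha+1)/p}}.
\end{equation*}
Since $((T_g f)_\eta)'(s_0 - \eta) = (T_g f)'(s_0) = f(s_0) g'(s_0)$ and $\sigma_0 - \eta > 1/2$, Proposition \ref{funder} applied at $s_0 - \eta$ gives
\begin{equation*}
|f(s_0)|\,|g'(s_0)| \leq \|\Delta_{s_0 - \eta}\|_{(\mathcal{H}^p)^*}\, \|(T_g f)_\eta\|_{\mathcal{H}^p} \lesssim (\sigma_0 - 1/2)^{-(p+1)/p} \cdot \frac{\|T_g\|_{\mathcal{L}(\mathcal{A}^p_\alpha)} \|f\|_{\mathcal{A}^p_\alpha}}{(\sigma_0 - 1/2)^{(\alpha+1)/p}}.
\end{equation*}
Dividing by $|f(s_0)|$ and invoking the test-function lower bound, the three exponents combine as $-(p+1)/p - (\alpha+1)/p + (\alpha+2)/p = -1$, yielding $|g'(s_0)| \lesssim \|T_g\|_{\mathcal{L}(\mathcal{A}^p_\alpha)}\,(\sigma_0 - 1/2)^{-1}$, which is exactly the desired Bloch bound at $s_0$.

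The main obstacle is reconciling three quantities that all blow up as $\sigma_0 \to (1/2)^+$: the horizontal-translation cost $\mu_\alpha([0,\eta])^{-1/p}$ incurred when embedding $\mathcal{A}^p_\alpha$ into $\mathcal{H}^p$, the $\mathcal{H}^p$-derivative cost $\|\Delta_{s_0 - \eta}\|_{(\mathcal{H}^p)^*}$ near the critical line, and the test-function gain $|f(s_0)|/\|f\|_{\mathcal{A}^p_\alpha}$. Choosing $\eta$ strictly proportional to $\sigma_0 - 1/2$ aligns all three scalings, and the constraint $\alpha > -1$ is exactly what legitimises the power law $\mu_\alpha([0,\eta]) \approx \eta^{\alpha+1}$ used throughout; no smaller admissible value of $\alpha$ would allow the exponents to collapse to $-1$.
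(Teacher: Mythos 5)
Your proposal is correct and is essentially the paper's own argument: both rest on the lower bound for $\|\delta_{s_0}\|_{(\mathcal{A}^p_{\alpha})^*}$ from \cite[Theorem 5.2]{GLQ2} together with the upper bound for $\|\Delta_u\|_{(\mathcal{H}^p)^*}$ of Proposition \ref{funder} and the identity $(T_gf)'=fg'$, and your single translation $\eta=(\sigma_0-1/2)/2$ combined with $\mu_{\alpha}([0,\eta])\approx\eta^{\alpha+1}$ is just a repackaging of the paper's integration of $\|\Delta_{s-\sigma}\|_{(\mathcal{H}^p)^*}^{-p}$ over $\sigma\in(0,\Re(s)-1/2)$ against $d\mu_{\alpha}$, with the same exponent count $(p+1)+(\alpha+1)-(\alpha+2)=p$. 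Only be aware that Proposition \ref{funder} is stated for real parts in $(1/2,2]$, so the (easy) range $\sigma_0\geq 2$ should be settled as the paper does, namely via $g=T_g1+g(+\infty)\in\mathcal{A}^p_{\alpha}$, hence $g\in\mathcal{H}^{\infty}(\C_{3/2})$, Cauchy's inequality and Remark \ref{esmuyutil}, rather than by the bare phrase ``same scheme with $\eta=1/2$''.
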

\begin{proof}
By Remark \ref{esmuyutil}, it suffices to prove that $g\in\mathcal{H}^{\infty}(\C_{3/2})$ and that
\begin{equation}\label{lopri}
    \sup_{\substack{\frac12<\sigma\leq\frac32\\ t\in\R}}(\sigma-1/2)|g'(\sigma+it)|<\infty.
\end{equation}
Since $T_g$ is bounded on $\mathcal{A}^p_{\alpha}$, testing $f=1$, we have that $g\in\mathcal{A}^p_{\alpha}$. In particular, by \cite[Theorem 5]{pascal}, we conclude that $g\in\mathcal{H}^{\infty}(\C_{3/2})$. Hence, it remains to show that \eqref{lopri} holds. To this purpose, let $f$ be a Dirichlet polynomial and $s\in\C_{1/2}$. We begin by using the evaluation functional for the derivative $\Delta_u$ in $\mathcal{H}^p$ at the point $u=s-\sigma\in\C_{1/2}$, where $\text{Re}(s)<3/2$, so
\begin{align*}
    \int_0^{\infty}\|(T_gf)_{\sigma}\|_{\mathcal{H}^p}^pd\mu_{\alpha}(\sigma)
     &\geq|f(s)|^p|g'(s)|^p\int_{0}^{\text{Re}(s)-1/2}\frac{1}{\|\Delta_{s-\sigma}\|_{(\mathcal{H}^p)^*}^p}d\mu_{\alpha}(\sigma).
\end{align*}
We now estimate from above the left-most integral. Since $T_g$ is bounded on $\mathcal{A}^p_{\alpha}$, we have that $T_g(f)$ belongs to $\mathcal{A}^p_{\alpha}$ and, consequently, $(T_gf)_{\varepsilon}\in\mathcal{H}^p$ for every $\varepsilon>0$. Hence, using both Fatou's Lemma and the boundedness of the operator in $\mathcal{A}^p_{\alpha}$, we have
\begin{align*}
\int_0^{\infty}\!\!\|(T_gf)_{\sigma}\|_{\mathcal{H}^p}^pd\mu_{\alpha}(\sigma)
\leq 
\liminf_{c\to0^+}\!
  \int_0^{\infty}\!\!\|(T_gf)_{\sigma+c}\|_{\mathcal{H}^p}^pd\mu_{\alpha}(\sigma)
    &=\!\liminf_{c\to0^+} \|(T_gf)_c\|_{\mathcal{A}^p_{\alpha}}^p\\
&\leq\|T_gf\|^p_{\mathcal{A}^p_{\alpha}}\\&
\lesssim\|f\|_{\mathcal{A}^p_{\alpha}}^p.
\end{align*}
Putting all together and taking the supremum over the $\mathcal{A}^p_{\mu}$ functions $f$ with norm $1$, we are led to
\begin{equation}\label{estge}
\|\delta_s\|_{(\mathcal{A}^p_{\mu})^*}^p|g'(s)|^p
\int_{0}^{\text{Re}(s)-1/2}\frac{1}{\|\Delta_{s-\sigma}\|_{(\mathcal{H}^p)^*}^p}d\mu_{\alpha}(\sigma)
\leq C,    
\end{equation}
where $C$ depends on $g$ but not on $s$. Then, using Proposition \ref{funder}, we obtain the following
\begin{align}\label{feo}
   \int_{0}^{\text{Re}(s)-1/2}\frac{1}{\|\Delta_{s-\sigma}\|_{(\mathcal{H}^p)^*}^p}d\mu_{\alpha}(\sigma)
   &\approx
   \int_{0}^{\text{Re}(s)-1/2}\frac{1}{\|\Delta_{s-\sigma}\|_{(\mathcal{H}^p)^*}^p}\sigma^{\alpha}d\sigma\nonumber\\
   &\geq 
   \int_{0}^{\text{Re}(s)-1/2}(\text{Re}s-\sigma-1/2)^{p+1}\sigma^{\alpha}d\sigma\nonumber\\
   &=
   (\text{Re}(s)-1/2)^{p+\alpha+2} \int_{0}^{1}(1-t)^{p+1}t^{\alpha}dt,
\end{align}
where in the equality we carried out the change of variables $\sigma=(\text{Re}(s)-1/2)t$, for $t\in(0,1)$. Now, by \cite[Theorem 5.2]{GLQ2}, we have that
\begin{equation*}\label{evaluacion}
    \|\delta_s\|_{(\mathcal{A}^p_{\alpha})^*}^p
\gtrsim
\frac{1}{(2\text{Re}(s)-1)^{2+\alpha}},\quad \alpha>-1.
\end{equation*}
Using this together with \eqref{feo} in \eqref{estge}, we obtain
\[
|g'(s)|^p(2\text{Re}(s)-1)^{p}\leq C.
\]
Therefore,
\[
\sup_{\substack{1/2<\text{Re}(s)\leq3/2 \\t\in\R}}|g'(\sigma+it)|(\text{Re}(s)-\frac12)<\infty,
\]
as desired.
\end{proof}

\subsection{Non-membership in Schatten classes}
Following the ideas from \cite[Theorem 7.2]{brevig}, we adapt the argument to the setting of the $\mathcal{A}_{\mu}^2$ spaces to prove the following:
\begin{teor}
Let $\mu$ be an admissible measure and let $g(s)=\sum_{n\geq1}a_nn^{-s}$ be a non-constant Dirichlet series. Then, $T_g:\mathcal{A}_{\mu}^2\to\mathcal{A}_{\mu}^2$ is not in the Schatten class $S_p$ for any $0<p<\infty$.
\end{teor}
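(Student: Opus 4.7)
The plan is to adapt the strategy of \cite[Theorem 7.2]{brevig} to the weighted Bergman setting: locate inside the matrix of $T_g$ in the normalised monomial basis an off-diagonal line whose entries decay only like $1/\log n$, then apply the Horn--Mirsky majorisation inequality to deduce $\sum_n s_n(T_g)^p=+\infty$ for every finite $p$.

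Normalise $\tilde e_n(s):=w_n^{-1/2}n^{-s}$ so that $\{\tilde e_n\}_{n\geq 1}$ is an orthonormal basis of $\mathcal{A}_\mu^2$. A one-line computation from \eqref{volterraop} yields
$$T_g(n^{-s})=\sum_{k\geq 2}a_k\,\frac{\log k}{\log(nk)}\,(nk)^{-s},$$
so the only non-zero entries of $\big(\langle T_g\tilde e_n,\tilde e_m\rangle\big)_{m,n}$ sit at positions $m=nk$ with $k\geq 2$, where they equal $\sqrt{w_{nk}/w_n}\cdot a_k\log k/\log(nk)$.

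Pick $k_0\geq 2$ with $a_{k_0}\neq 0$ (possible since $g$ is non-constant) and set $u_n:=\tilde e_n$, $v_n:=\tilde e_{nk_0}$; both are orthonormal systems in $\mathcal{A}_\mu^2$. By Lemma \ref{proppesos} one has $w_{nk_0}\geq w_n w_{k_0}$, and $w_{k_0}>0$ since $0\in\text{supp}(\mu)$, so
$$|\langle T_g u_n, v_n\rangle|=\sqrt{\frac{w_{nk_0}}{w_n}}\,\frac{|a_{k_0}|\log k_0}{\log(nk_0)}\geq\frac{c}{\log n},\qquad n\geq 2,$$
for some $c=c(g,k_0,\mu)>0$. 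The Horn--Mirsky--Ky Fan majorisation inequality then gives, for every $p\geq 1$,
$$\sum_{n\geq 1} s_n(T_g)^p\geq\sum_{n\geq 1}|\langle T_g u_n,v_n\rangle|^p\gtrsim\sum_{n\geq 2}(\log n)^{-p}=+\infty,$$
so $T_g\notin S_p$ for every $p\geq 1$. The routine inclusion $S_p\subset S_1$ for $0<p\leq 1$ (a consequence of $s_n\to 0$ in $S_1$) then covers the remaining range.

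No significant obstacle is expected: the only place where the Bergman setting differs from $\mathcal{H}^2$ is the appearance of the factor $\sqrt{w_{nk_0}/w_n}$, which is harmlessly controlled by the weight submultiplicativity supplied by Lemma \ref{proppesos}. The decisive feature is that $1/\log n$ is not $p$-summable for any finite $p$, which is exactly what prevents $T_g$ from belonging to any Schatten class.
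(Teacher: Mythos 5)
Your argument is correct and follows essentially the same route as the paper: both proofs isolate the matrix entry of $T_g$ at the position $(n,nk_0)$ in the orthonormal basis $\{\tilde e_n\}$, control the weight ratio via the submultiplicativity $w_{nk_0}\geq w_nw_{k_0}$ of Lemma \ref{proppesos}, and conclude from the divergence of $\sum_n(\log n)^{-p}$. The only (cosmetic) difference is the Schatten lower-bound tool: the paper uses $\|T_g\|_{S_p}^p\geq\sum_n\|T_g\tilde e_n\|_{\mathcal{A}^2_{\mu}}^p$ for $p\geq2$ (\cite[Theorem 1.33]{zhu}) and then Schatten inclusions, while you invoke the Ky Fan type entrywise majorisation valid for $p\geq1$ and cover $0<p<1$ by inclusion.
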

\begin{proof}
    By hypothesis, $g$ is not constant, hence, $n_0=\inf\{n\geq2:a_n\not=0\}<\infty$. Let $e_n=n^{-s}$ and $\widetilde{e}_n=e_n/\sqrt{w_n}$, where, we recall
    \[
w_n=\int_0^{\infty}n^{-2\sigma}d\mu(\sigma).
    \]    
     Then, the set $\{\widetilde{e}_n\}$ is an orthonormal basis of $\mathcal{A}^2_{\mu}$. Let $p\in[2,\infty)$. Then, by  \cite[Theorem 1.33]{zhu},
    \begin{equation*}
        \|T_g\|_{S_p}^p\geq\sum_{n\geq n_0}\|T_g(\widetilde{e}_n)\|_{\mathcal{A}_{\mu}^2}^p.
    \end{equation*}
    A simple computation yields
    \begin{align*}
        \|T_g(\widetilde{e}_n)\|_{\mathcal{A}^2_{\mu}}^2
        =
        \sum_{m\geq2}\left|\frac{\log m}{\log n+\log m}   \right|^2|a_m|^2\frac{w_{nm}}{w_n}
        &\geq
        \frac{(\log n_0)^2}{(\log n+\log n_0)^2}|a_{n_0}|^2\frac{w_{nn_0}}{w_n}\\
        &\geq  \frac{(\log n_0)^2}{4(\log n)^2}|a_{n_0}|^2w_{n_0}
    \end{align*}
    where in the last inequality we used Lemma \ref{proppesos}. Then,
    \[
     \|T_g\|_{S_p}^p\geq C \sum_{n\geq n_0}\frac{1}{(\log n)^p}=\infty.
    \]
    Therefore, the operator $T_g$ fails to be in any Schatten class $S_p$ for $p\in[2,\infty)$. The inclusion between the Schatten classes gives the conclusion for all finite $p$.
\end{proof}

\subsection{Compactness}
In this section we give a sufficient condition for compactness on $\mathcal{A}^p_{\mu}$ when $1<p<\infty$.
\begin{lema}\label{compact}
   Let $\{\mu_n\}$ be a sequence of positive real numbers and $X$ be a Banach space with a weak Schauder  basis $\{e_n\}$, namely,  
    \[
\|\sum_{n=1}^{N}a_ne_n\|_X\leq\mu_N\|\sum_{n=1}^{\infty}a_ne_n\|_X.
    \]
    Consider $\{\lambda_n\}$ a sequence of real numbers $\lambda_n\to0$ such that
    \begin{equation}\label{condo}
        \sum_{n=1}^{\infty}\mu_n|\lambda_n-\lambda_{n+1}|<\infty.
    \end{equation}
    Then, the operator
    \[
    T(\sum_{n=1}^{\infty}a_nn^{-s})=\sum_{n=1}^{\infty}a_n\lambda_ne_n
    \]
    is compact on $X$.
\end{lema}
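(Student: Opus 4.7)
The plan is to approximate $T$ in operator norm by the finite-rank truncations
\[
T_N\Bigl(\sum_{n\geq1}a_ne_n\Bigr)=\sum_{n=1}^{N}a_n\lambda_n e_n,
\]
each of which has rank $\leq N$ and is therefore compact; since the compact operators form a closed subspace of $\mathcal{L}(X)$, obtaining $\|T-T_N\|\to0$ will give the conclusion. The boundedness and even the definition of $T$ itself will drop out of the same estimate applied with $N=0$.

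The main technical tool is Abel's summation. Writing $S_k x:=\sum_{n=1}^{k}a_ne_n$ (so that, by hypothesis, $\|S_k x\|_X\leq\mu_k\|x\|_X$) and using $a_ne_n=S_n x-S_{n-1}x$, I would compute for $N<M$
\[
T_M x - T_N x \;=\; \sum_{n=N+1}^{M}\lambda_n\bigl(S_n x-S_{n-1}x\bigr) \;=\; \lambda_M S_M x-\lambda_{N+1}S_N x+\sum_{n=N+1}^{M-1}(\lambda_n-\lambda_{n+1})S_n x,
\]
whence
\[
\|T_M x-T_N x\|_X \;\leq\; \Bigl(\,|\lambda_M|\mu_M+|\lambda_{N+1}|\mu_N+\sum_{n=N+1}^{M-1}|\lambda_n-\lambda_{n+1}|\mu_n\Bigr)\|x\|_X.
\]

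The heart of the matter is to show that the boundary terms $|\lambda_N|\mu_N$ also go to $0$. Replacing if necessary $\mu_n$ by $\tilde\mu_n:=\max_{k\leq n}\mu_k$ (which preserves the weak-basis inequality and, in the intended applications, equals $\mu_n$), I may assume $(\mu_n)$ is non-decreasing. Then, since $\lambda_n\to0$, Abel telescoping gives $|\lambda_N|\leq\sum_{n\geq N}|\lambda_n-\lambda_{n+1}|$, so
\[
\mu_N|\lambda_N|\;\leq\;\sum_{n\geq N}\mu_N|\lambda_n-\lambda_{n+1}|\;\leq\;\sum_{n\geq N}\mu_n|\lambda_n-\lambda_{n+1}|\xrightarrow[N\to\infty]{}0
\]
by the summability hypothesis \eqref{condo}. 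This is the step I expect to require the most care, precisely because the statement as written does not build monotonicity of $(\mu_n)$ into the hypotheses.

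Combining the two displays, $(T_N)$ is Cauchy in $\mathcal{L}(X)$, so it converges in operator norm to some bounded operator $\widetilde T$. On finitely supported sequences $T_Nx$ stabilises to $\sum a_n\lambda_n e_n$, so $\widetilde T$ coincides with the formal definition of $T$ on a dense subspace, and thus $T=\widetilde T$ is well defined, bounded (with $\|T\|\leq\sum_{n\geq1}\mu_n|\lambda_n-\lambda_{n+1}|$), and the norm limit of the finite-rank operators $T_N$. Being a norm limit of compact operators, $T$ is compact on $X$, which is the desired conclusion.
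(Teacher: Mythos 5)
Your proof is correct and follows essentially the same route as the paper's: approximate $T$ by the finite-rank truncations $T_N$ and control $\|T-T_N\|$ by Abel summation against the partial-sum bounds $\|S_nx\|_X\leq\mu_n\|x\|_X$, concluding via \eqref{condo}. Your explicit treatment of the boundary term $\mu_N|\lambda_N|$ (which in effect requires $\mu_n$ monotone, or constant as in the paper's application to $\mathcal{A}^p_{\mu}$ where $\mu_n\equiv c$) is a point the paper's own argument passes over silently, so no genuine gap beyond what the original proof already assumes.
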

\begin{proof}
Let $f(s)=\sum_{n\geq1}a_ne_n$ be a norm-$1$ vector in $X$. For each $N\in\N$, we consider the finite rank operator $T_N$ given by
\[
T_N(f)=\sum_{n=1}^Na_n\lambda_ne_n.
\]
We also let $S_n=\sum_{n=1}^na_ke_k$. Using Abel's summation formula and condition \eqref{condo}, we find that
\begin{align*}
    \|Tf-T_Nf\|_X=\|\sum_{n=N+1}^{\infty}a_n\lambda_ne_{n}\|_X
    &=
    \|\sum_{n=N+1}^{\infty}\lambda_n(S_n-S_{n-1})\|_X\\
    &=
\|\sum_{n=N+1}^{\infty}S_n(\lambda_n-\lambda_{n+1})\|_X
+\lambda_{N+1}\|S_N\|_X
\\
    &
    \lesssim \|f\|_X
    \sum_{n=N+1}^{\infty}\mu_n|\lambda_n-\lambda_{n+1}|.
\end{align*}
Taking the supremum over the norm-$1$ Dirichlet series and using the fact that the series in \eqref{condo} converges, we get  $\Vert T-T_N\Vert\to 0$ and the conclusion follows.
\end{proof}

\begin{prop}\label{policomp}
  Let $1<p<\infty$ and $g$ be a Dirichlet polynomial. Then, the operator $T_g$ is compact on $\mathcal{A}^p_{\mu}$.
\end{prop}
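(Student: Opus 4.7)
My approach will be to reduce to a monomial symbol by linearity, factor the resulting operator as a product of a contractive multiplication and a diagonal operator on the canonical system, and then invoke Lemma \ref{compact} to obtain compactness of the diagonal factor.

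First, since $g\mapsto T_g$ is linear and annihilates constants, writing $g(s)=\sum_{k=1}^{N} a_k\,k^{-s}$ reduces the problem to showing that $T_{k^{-s}}$ is compact on $\mathcal{A}^p_\mu$ for each integer $k\geq 2$. A direct computation from \eqref{volterraop} gives, for $f(s)=\sum_{n\geq 1}b_n n^{-s}$,
\[
T_{k^{-s}}(f)(s)=\sum_{n\geq 1}\lambda_n\,b_n\,(kn)^{-s},\qquad \lambda_n:=\frac{\log k}{\log k+\log n},
\]
which exhibits the factorization $T_{k^{-s}}=M_k\circ D_k$, where $M_k f:=k^{-s}f$ is the multiplication operator and $D_k$ is the diagonal operator with $D_k(e_n)=\lambda_n e_n$.

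The operator $M_k$ is a contraction on $\mathcal{A}^p_\mu$: via the Bohr lift, multiplication by $k^{-s}$ corresponds to multiplication by a monomial of modulus one on $\T^\infty$, hence is an isometry of $\mathcal{H}^p$, and the identity $(M_k f)_\sigma=k^{-\sigma}M_k(f_\sigma)$ yields $\|M_k f\|_{\mathcal{A}^p_\mu}^p\leq \|f\|_{\mathcal{A}^p_\mu}^p$. Thus the compactness of $T_{k^{-s}}$ reduces to that of $D_k$, which I will establish by applying Lemma \ref{compact}. Since $1<p<\infty$, the system $(e_n)_{n\geq 1}$ is a Schauder basis of $\mathcal{A}^p_\mu$ by \cite{aleolsen, pascal}, and in particular the partial sum projections are uniformly bounded by some constant $C>0$; thus one may take $\mu_n\equiv C$ in the statement of Lemma \ref{compact}. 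The sequence $(\lambda_n)$ is positive, strictly decreasing and tends to zero, so condition \eqref{condo} is satisfied trivially by telescoping:
\[
\sum_{n=1}^{\infty}\mu_n|\lambda_n-\lambda_{n+1}|=C\sum_{n=1}^{\infty}(\lambda_n-\lambda_{n+1})=C\lambda_1<\infty.
\]
Lemma \ref{compact} then gives the compactness of $D_k$, and combining this with the boundedness of $M_k$ yields the compactness of $T_{k^{-s}}=M_k\circ D_k$. By linearity, $T_g$ is compact for every Dirichlet polynomial $g$.

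The only minor point requiring attention is that the formula for $T_{k^{-s}}$ and the factorization $T_{k^{-s}}=M_k\circ D_k$ are first derived on the dense subspace of Dirichlet polynomials; this identity then extends to the whole of $\mathcal{A}^p_\mu$ by density, once the boundedness of both factors on this space has been verified as above. Beyond this routine density argument, no real obstacle arises, and the proof runs through cleanly thanks to the monotonicity of the eigenvalues of $D_k$ which makes the verification of \eqref{condo} essentially automatic.
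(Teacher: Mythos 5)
Your proposal is correct and follows essentially the same route as the paper's proof: reduction to monomial symbols $e_k=k^{-s}$ by linearity, the computation $T_{e_k}(e_n)=\frac{\log k}{\log k+\log n}\,e_{kn}$, an application of Lemma \ref{compact} using the Schauder basis property of $(e_n)$ and the monotone decay of the eigenvalues, and the observation that multiplication by $e_k$ is a contraction on $\mathcal{A}^p_{\mu}$. Your explicit factorization $T_{e_k}=M_k\circ D_k$ and the Bohr-lift justification of the contractivity of $M_k$ merely spell out what the paper states in one line, so there is no substantive difference.
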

\begin{proof}
Fix $p\in(1,\infty)$. Since $T_g$ is linear on $g$ and the finite sum of compact operators is compact, it suffices to prove the result for the Dirichlet monomials $e_k=k^{-s}$, $k\geq2$. First, notice that, for $k$ fixed, a direct computation shows that
\begin{align*}
    T_{e_k}(e_n)=w_ne_{kn},
\end{align*}
where $w_n=\frac{\log k}{\log k+\log n}\cdot$ Hence, for $f(s)=\sum_{n\geq2}a_ne_n\in\mathcal{A}^p_{\mu}$, we have that
\[
T_{e_k}(f)=T_{e_k}(\sum_{n\geq2}a_ne_n)=
\sum_{n=2}^{\infty}a_nw_ne_{kn}
\]
Notice that the sequence $\lambda_n=w_n$ decreases to $0$ as $n\to\infty$, $k$ fixed. Now, since $\{e_n\}$ is a Schauder basis on $\mathcal{A}^p_{\mu}$ (\cite{aleolsen}, \cite{pascal}), then condition \eqref{condo} is satisfied with $\mu_n=c$ since $\{\lambda_n\}$ is decreasing. Note that since $M_{e_k}$ is a contraction on $\mathcal{A}^p_{\mu}$, multiplying by $e_k$ does not affect the conclusion. Hence, we can apply Lemma \ref{compact} to
the operator $T_{e_k}$, obtaining its compactness, as desired.
\end{proof}
\begin{teor}
 Let $1<p<\infty$. If $g$ is a Dirichlet series belonging to the space $\emph{Bloch}_{\mu,0}(\C_+)$ and $\mu$ satisfies the $H_1$-condition, then $T_g$ is compact on $\mathcal{A}^p_{\mu}$.
        \end{teor}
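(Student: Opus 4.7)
The plan is to realise $T_g$ as the operator-norm limit of a sequence of compact operators, which by a standard argument gives compactness. The three ingredients needed are already in place: the density of Dirichlet polynomials in $\text{Bloch}_{\mu,0}(\C_+)$ under the $H_1$-condition (Proposition \ref{densite}), the compactness of $T_P$ when $P$ is a Dirichlet polynomial (Proposition \ref{policomp}), and the norm estimate $\|T_\varphi\|_{\mathcal{L}(\mathcal{A}^p_\mu)} \lesssim \|\varphi\|_{\text{Bloch}_\mu(\C_+)}$ coming from Theorem \ref{carlesonberg}.

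More precisely, I would first invoke Proposition \ref{densite}: since $\mu$ satisfies the $H_1$-condition and $g \in \text{Bloch}_{\mu,0}(\C_+)$, there exists a sequence of Dirichlet polynomials $\{P_n\}$ such that
\[
\|g - P_n\|_{\text{Bloch}_{\mu}(\C_+)} \xrightarrow[n\to\infty]{} 0.
\]
Since each $P_n \in \mathcal{D}$ (it is a Dirichlet polynomial), the difference $g - P_n$ lies in $\text{Bloch}_{\mu}(\C_+) \cap \mathcal{D}$, so Theorem \ref{carlesonberg} applies and yields
\[
\|T_g - T_{P_n}\|_{\mathcal{L}(\mathcal{A}^p_{\mu})} = \|T_{g - P_n}\|_{\mathcal{L}(\mathcal{A}^p_{\mu})} \lesssim \|g - P_n\|_{\text{Bloch}_{\mu}(\C_+)} \xrightarrow[n\to\infty]{} 0.
\]
By Proposition \ref{policomp}, each $T_{P_n}$ is compact on $\mathcal{A}^p_{\mu}$, and since the space of compact operators on a Banach space is closed in the operator norm topology, the limit $T_g$ is also compact. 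This concludes the argument.

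There is no real obstacle here, as all the heavy lifting has been done in the previous sections; the proof is essentially a three-line assembly once the density result (Proposition \ref{densite}) and the norm control from Theorem \ref{carlesonberg} are in hand. The only subtle point to verify is that the hypotheses align: $g - P_n$ is a genuine Dirichlet series for which Theorem \ref{carlesonberg} is applicable, and this is immediate since both $g$ and $P_n$ belong to $\mathcal{D}$.
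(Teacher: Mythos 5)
Your proposal is correct and follows exactly the paper's own argument: approximate $g$ by Dirichlet polynomials via Proposition \ref{densite}, control $\|T_g - T_{P_n}\|$ by Theorem \ref{carlesonberg}, and use the compactness of $T_{P_n}$ from Proposition \ref{policomp} together with the closedness of compact operators in the operator norm. Nothing differs in substance from the paper's proof.
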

\begin{proof}
Let $g$ be as above and take $\varepsilon>0$. Then, Proposition \ref{densite} guarantees the existence of a Dirichlet polynomial $P$ such that $\|g-P\|_{\text{Bloch}_{\mu}(\C_+)}<\varepsilon$. Now, thanks to Theorem \ref{carlesonberg} we know that
\[
\|T_P-T_g\|\lesssim\|g-P\|_{\text{Bloch}_{\mu}(\C+)}<\varepsilon.
\]
Using Proposition \ref{policomp} we obtain the desired result.
\end{proof}

\begin{remark}
    The same type of argument works in $\mathcal{H}^p$ for $1<p<\infty$ if we change $\text{Bloch}_{\mu,0}(\C_+)$ by $\text{VMOA}(\C_+)$ (see \cite[Section 7]{brevig} for the definition).
\end{remark}

\section{A radicality result}
The following radicality result was proven in \cite{alem}. 
\begin{teor}\label{radicality}
Let $f$ be an analytic function in the unit disc $\D$ such that $f^n\in\mathcal{B}$ for some $n\in\N$. Then, $f^m\in\mathcal{B}$ for all $m<n$, $m\in\N$.
\end{teor}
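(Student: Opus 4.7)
The plan is to prove $f^{m}\in\mathcal{B}$ directly, by showing that the quantity $(1-|z|^{2})|(f^{m})'(z)|=m(1-|z|^{2})|f(z)|^{m-1}|f'(z)|$ is uniformly bounded on $\D$. Write $M=\|f^{n}\|_{\mathcal{B}}$, so by hypothesis $n(1-|z|^{2})|f(z)|^{n-1}|f'(z)|\leq M$ for every $z\in\D$. The key idea is to split $\D$ according to the size of $|f|$, i.e.\ into the two regions $\{z\in\D:|f(z)|\geq 1\}$ and $\{z\in\D:|f(z)|<1\}$, and to treat each separately.

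On $\{|f(z)|\geq 1\}$, the bound is painless: since $m\leq n$ implies $|f(z)|^{m-1}\leq |f(z)|^{n-1}$, comparing directly with the hypothesis gives $m(1-|z|^{2})|f(z)|^{m-1}|f'(z)|\leq (m/n)M$. The genuine difficulty is concentrated in the sublevel set $\{|f(z)|<1\}$: there $|f(z)|^{m-1}\leq 1$, so the whole problem reduces to bounding $(1-|z|^{2})|f'(z)|$ uniformly on that set. A pointwise division in the hypothesis is useless, since the factor $|f|^{n-1}$ degenerates to zero exactly there.

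To close this gap I would exploit that $f^{n}$, being Bloch, is Lipschitz with respect to the hyperbolic distance of the disc: integrating the Bloch estimate along a hyperbolic geodesic gives $|f^{n}(z)-f^{n}(z_{0})|\leq (M/2)\,d_{h}(z_{0},z)$. Fixing any $z_{0}$ with $|f(z_{0})|\leq 1$, this forces $|f(z)|^{n}\leq 1+M/2$ on the hyperbolic disc $B_{h}(z_{0},1)$. Since $B_{h}(z_{0},1)$ contains a Euclidean disc $D(z_{0},c(1-|z_{0}|^{2}))$ for an absolute constant $c>0$ independent of $z_{0}$, Cauchy's inequality applied on that Euclidean disc yields $(1-|z_{0}|^{2})|f'(z_{0})|\lesssim (1+M/2)^{1/n}$, with a constant depending only on $M$ and $n$, as desired.

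Combining the two regimes produces a uniform bound on $(1-|z|^{2})|(f^{m})'(z)|$ and hence $f^{m}\in\mathcal{B}$. The main obstacle is precisely the estimate on the sublevel set $\{|f|<1\}$, where the hypothesis on $f^{n}$ gives no pointwise control on $|f'|$; the argument sidesteps this degeneracy by combining the hyperbolic-Lipschitz growth of the Bloch function $f^{n}$ with the Cauchy estimate on a small but uniformly scaled Euclidean disc around each point. Apart from this observation the argument is purely pointwise and requires no heavier machinery such as duality, atomic decomposition or Carleson measure characterisations.
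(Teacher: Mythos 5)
Your argument is correct, but it is genuinely different from the one in the paper (which quotes Theorem \ref{radicality} from the work of Aleman--Cascante--F\`abrega--Pascuas--Pel\'aez and reproves it, in the more general polydisc setting, in Theorem \ref{dradicality}). You bound the Bloch seminorm of $f^m$ pointwise, splitting $\D$ into $\{|f|\geq 1\}$, where $m|f|^{m-1}\leq n|f|^{n-1}$ makes the comparison with the hypothesis immediate, and $\{|f|<1\}$, where you exploit that the Bloch function $f^n$ is Lipschitz for the hyperbolic metric, so $|f|$ stays bounded by $(1+CM)^{1/n}$ on a hyperbolic ball of fixed radius around $z_0$, and then a Cauchy estimate on the comparable Euclidean disc $D(z_0,c(1-|z_0|^2))$ controls $(1-|z_0|^2)|f'(z_0)|$; all the steps you invoke (hyperbolic Lipschitz property, inclusion of a Euclidean disc of radius $\approx 1-|z_0|^2$ in the hyperbolic ball) are standard and correct. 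The paper's route instead uses Axler's Garsia-type characterisation $f\in\mathcal{B}\iff\sup_{a\in\D}\|f\circ\varphi_a-f(a)\|_{A^2}<\infty$, and then a purely Hilbert-space computation (Pythagoras in $A^2$, H\"older, and the superadditivity of $t\mapsto t^{n/m}$) yielding the homogeneous inequality $\|f^m\circ\varphi_a-f^m(a)\|_{A^2}^{1/m}\leq\|f^n\circ\varphi_a-f^n(a)\|_{A^2}^{1/n}$, hence $\|f^m\|_{\mathcal{B}}^{1/m}\lesssim\|f^n\|_{\mathcal{B}}^{1/n}$. What each buys: your proof is elementary and self-contained, avoiding the Bergman-norm characterisation altogether, and isolates clearly where the difficulty sits (the sublevel set of $f$); the paper's proof gives the sharper, scale-homogeneous norm estimate and is formulated so that it passes verbatim to $\D^d$ (Lemma \ref{invari} and Theorem \ref{dradicality}), which is what the application to Dirichlet series in Theorem \ref{th:rad} requires. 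Note, though, that your argument also adapts to the polydisc with minor changes, since the per-coordinate Bloch condition gives the Lipschitz bound in each variable separately and the Cauchy estimate is applied one coordinate at a time.
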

The proof of the theorem depends strongly on having a good definition of the norm of the Bloch functions in terms of some Hilbert space norm. In the case of the Bloch space this definition is done through Garsia's norm, see \cite[Theorem 1]{axler}. More specifically, the following characterisation of the Bloch space $\mathcal{B}$ of the unit disc is used. As usual, for $a\in\D$ fixed,
\[
\varphi_a(z)=\frac{a-z}{1-\overline{a}z}.
\]

\begin{teor}
Let $f$ be an analytic function in the unit disc $\D$. Then, $f$ belongs to the Bloch space in the unit disc $\mathcal{B}$ if, and only if, 
\begin{equation*}
    \sup_{a\in\D}\|f\circ\varphi_{a}-f(a)\|_{A^2}<\infty,
\end{equation*}
moreover, with equivalent norms.
\end{teor}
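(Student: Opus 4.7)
The strategy is to exploit the Möbius invariance of the Bloch seminorm together with a basic coefficient bound in the Bergman space $A^2$ of the disc. Setting $g_a := f\circ\varphi_a - f(a)$ for $a\in\D$, the whole argument reduces to controlling the quantity $(1-|a|^2)|f'(a)|$ by $\|g_a\|_{A^2}$ in both directions.

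For the forward implication, I would start from the classical identity $(1-|z|^2)|\varphi_a'(z)| = 1-|\varphi_a(z)|^2$, which combined with the chain rule gives
\[
(1-|z|^2)\,|(f\circ\varphi_a)'(z)| = (1-|\varphi_a(z)|^2)\,|f'(\varphi_a(z))|.
\]
Since $\varphi_a\colon\D\to\D$ is a bijection, this yields the Möbius invariance of the Bloch seminorm, so $g_a$ has the same Bloch seminorm as $f$ and satisfies $g_a(0)=0$. Integrating the derivative along a radius gives the standard growth bound $|g_a(z)|\lesssim \|f\|_{\mathcal{B}}\,\log\frac{1}{1-|z|}$. Because $\log\frac{1}{1-|z|}$ is square-integrable with respect to normalised area measure on $\D$, one concludes that $\sup_{a\in\D}\|g_a\|_{A^2}\lesssim \|f\|_{\mathcal{B}}$.

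For the converse implication, the point is that $(1-|a|^2)f'(a)$ arises naturally as the derivative at the origin of $g_a$: indeed $\varphi_a'(0)=-(1-|a|^2)$, so $g_a'(0)=-(1-|a|^2)f'(a)$. Since $g_a$ vanishes at $0$, its Taylor expansion $g_a(z)=\sum_{n\geq 1}c_n z^n$ satisfies $\|g_a\|_{A^2}^2=\sum_{n\geq 1}\frac{|c_n|^2}{n+1}$, whence the trivial coefficient inequality $|g_a'(0)|=|c_1|\leq \sqrt{2}\,\|g_a\|_{A^2}$. Taking the supremum over $a\in\D$ gives $\|f\|_{\mathcal{B}}\lesssim \sup_{a\in\D}\|g_a\|_{A^2}$, and the two bounds together yield the claimed norm equivalence. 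The proof is quite clean and I do not anticipate a serious obstacle; the only point requiring care is the verification of the identity $(1-|z|^2)|\varphi_a'(z)|=1-|\varphi_a(z)|^2$ used to transfer the Bloch seminorm under $\varphi_a$.
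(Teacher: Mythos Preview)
Your proposal is correct and follows essentially the same approach as the paper. The paper does not prove this one-dimensional statement directly (it is quoted from \cite{axler}), but it proves the polydisc analogue in Lemma~\ref{invari}, and your argument coincides with that proof specialised to $d=1$: for the forward direction you use M\"obius invariance together with the logarithmic growth estimate $|g_a(z)|\lesssim\|f\|_{\mathcal{B}}\log\frac{1}{1-|z|}$ and its square-integrability, and for the converse you use the coefficient bound $|g_a'(0)|\leq 2\|g_a\|_{A^2}$; both steps are exactly what the paper does.
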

Here $A^2=A^2(\D)$ stands for the Bergman space of unit disc $\D$, namely, a holomorphic function $f$ in the unit disc belongs to this space $A^2$ iff 
\begin{equation*}
    \|f\|_{A^2}^2=\frac{1}{\pi}\int_{\D}|f(z)|^2dA(z)<\infty.
\end{equation*}
Let $d\in\N$. From now on, $\D^d$ will denote the polydisc, that is,  
we consider $d$ copies of the unit disc $\D$.

\ 
The purpose of this section is to prove that Theorem \ref{radicality} still holds for the Bloch space in the polydisc $\D^d$, and then to apply that fact to Dirichlet series, under the form
\begin{teor}\label{th:rad} Let $f(s)=\sum_{k=1}^\infty a_k k^{-s}\in \mathcal{D}_d$, depending only on $d$ given primes.  If $f^n\in \mathcal{B}$, then $f^m\in \mathcal{B}$ for all integers $1\leq m\leq n$.
\end{teor}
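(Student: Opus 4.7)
The plan is to reduce the statement to its polydisc analogue via the Bohr correspondence, and then to the disc case Theorem~\ref{radicality}. Since $f\in\mathcal{D}_d$ depends only on the $d$ given primes $p_1,\dots,p_d$, its Bohr lift $F(z_1,\dots,z_d)=\sum_{k} a_k z_1^{\alpha_1(k)}\cdots z_d^{\alpha_d(k)}$ is a holomorphic function on the polydisc $\D^d$. Via the change of variables $z_j=p_j^{-s}$ together with a Kronecker-density argument, the Bloch space $\mathcal{B}$ of Dirichlet series in $\mathcal{D}_d$ is naturally identified with the Bloch space $\mathcal{B}(\D^d)$ of the polydisc. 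Hence it suffices to prove the polydisc radicality statement: for $F\in\mathrm{Hol}(\D^d)$, if $F^n\in\mathcal{B}(\D^d)$ then $F^m\in\mathcal{B}(\D^d)$ for every $1\leq m\leq n$.

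To establish this polydisc radicality I would argue slicewise. The polydisc Bloch seminorm is equivalent to $\sum_{j=1}^d \sup_{z\in\D^d}(1-|z_j|^2)|\partial_j F(z)|$. Fix a coordinate $j$ and freeze the other variables at $\widehat{z}_j\in\D^{d-1}$; then $g(w)=F(z_1,\dots,z_{j-1},w,z_{j+1},\dots,z_d)$ is holomorphic on $\D$, and the seminorm of $g^n$ in $\mathcal{B}(\D)$ is dominated by $\|F^n\|_{\mathcal{B}(\D^d)}$ uniformly in $\widehat{z}_j$. Inspecting the proof of Theorem~\ref{radicality} in \cite{alem} yields the \emph{quantitative} bound $\|g^m\|_{\mathcal{B}(\D)}\leq C(n,m)\|g^n\|_{\mathcal{B}(\D)}$ with $C(n,m)$ independent of $g$. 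Taking the supremum over $\widehat{z}_j$ and summing over $j$ gives the desired conclusion. A cleaner alternative is to extend the Garsia-type characterization to $\D^d$, namely $F\in\mathcal{B}(\D^d)\iff\sup_{a\in\D^d}\|F\circ\varphi_a-F(a)\|_{A^2(\D^d)}<\infty$ with $\varphi_a$ acting coordinatewise, and then transfer the argument of \cite{alem} using the algebraic identity $F^n-F(a)^n=(F-F(a))\sum_{k=0}^{n-1}F^k F(a)^{n-1-k}$ together with the product structure of $A^2(\D^d)$.

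The main obstacle I expect is twofold: first, confirming the uniform constant $C(n,m)$ in Theorem~\ref{radicality}, which is implicit in \cite{alem} but must be extracted from its proof for the slice argument to close; and second, making the Bohr-lift identification between the Dirichlet-series Bloch space on $\mathcal{D}_d$ and the polydisc Bloch space $\mathcal{B}(\D^d)$ completely rigorous, including the correct weights induced by the change of variables $z_j=p_j^{-s}$. Once these two points are settled, the rest of the argument is essentially formal.
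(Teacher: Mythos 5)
Your overall strategy coincides with the paper's: Bohr-lift to the polydisc, prove radicality for $\mathcal{B}(\D^d)$, and pull back to $\mathcal{D}_d$. In fact your ``cleaner alternative'' \emph{is} the paper's proof: it establishes the Garsia-type characterization $F\in\mathcal{B}(\D^d)\iff\sup_{a\in\D^d}\|F\circ\Phi_a-F(a)\|_{A^2(\D^d)}<\infty$ (Lemma \ref{invari}, after describing $\mathrm{Aut}(\D^d)$ and the Möbius invariance of the Bloch seminorm), and then runs the Hilbert-space argument of \cite{alem} directly in $d$ variables: Pythagoras gives $\|F^k\circ\Phi_a-F^k(a)\|^2=\|F^k\circ\Phi_a\|^2-|F(a)|^{2k}$, Hölder/Jensen gives $\|F^n\circ\Phi_a\|^2\geq\|F^m\circ\Phi_a\|^{2n/m}$, and the elementary inequality $(x+y)^{n/m}\geq x^{n/m}+y^{n/m}$ yields $\|F^m\circ\Phi_a-F^m(a)\|^{1/m}\leq\|F^n\circ\Phi_a-F^n(a)\|^{1/n}$ uniformly in $a$, which is Theorem \ref{dradicality}. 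Your primary, slicewise route is a genuinely different treatment of that polydisc step, and it does close: the one-variable argument just recalled is quantitative (constant $1$ in the Garsia seminorm, hence a uniform constant $C(n,m)$ for the Bloch seminorm via Axler's equivalence \cite{axler}), the restriction of $F^n$ to each slice in the $j$-th variable has one-variable Bloch seminorm at most $\|F^n\|_{\mathcal{B}(\D^d)}$, and taking suprema over the frozen variables $\widehat{z}_j$ and over $j$ gives $F^m\in\mathcal{B}(\D^d)$. What the slicing buys is that you never need the automorphism description of $\D^d$ or the $d$-dimensional Garsia lemma; what the paper's route buys is the clean exponent inequality $(\|F^m\|)^{1/m}\leq(\|F^n\|)^{1/n}$ in all variables at once, with no extraction of constants from \cite{alem}. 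Finally, the point you flag about making the Bohr-lift identification of $\text{Bloch}(\C_+)\cap\mathcal{D}_d$ with $\mathcal{B}(\D^d)$ rigorous (one only controls the single directional derivative $f'(s)=-\sum_j(\log p_j)p_j^{-s}D_jF$ along $z_j=p_j^{-s}$) is legitimate, but the paper's own proof of Theorem \ref{th:rad} asserts this transfer just as briefly, so it is not a gap of your argument relative to the paper's.
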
 

Let us first define this space:
\begin{equation*}
    \mathcal{B}(\D^d)=\{
f:\D^d\to\C \ \text{holomorphic}: \max_{1\leq j\leq d}\sup_{z\in\D^d}|D_{j}f(z)|(1-|z_j|^2)<\infty
    \}.
\end{equation*}
Therefore, we want to prove the following $d$-dimensional version of Theorem \ref{radicality}.
\begin{teor}\label{dradicality}
    Let $f$ be an analytic function in $\D^d$ such that $f^n\in\mathcal{B}(\D^d)$ for some $n\in \N$. Then, $f^m\in\mathcal{B}(\D^d)$ for all $m\leq n$, $m\in\N$.
\end{teor}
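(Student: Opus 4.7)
The plan is to lift the one-variable proof of Theorem \ref{radicality} in \cite{alem} to the polydisc. As the excerpt stresses, the one-variable argument is anchored on Garsia's identification of $\mathcal{B}(\D)$ as the space of holomorphic $f$ for which $\sup_a\|f\circ\varphi_a-f(a)\|_{A^2(\D)}<\infty$. Hence the first step is to establish the same identification in $\D^d$: with $\varphi_a(z)=(\varphi_{a_1}(z_1),\ldots,\varphi_{a_d}(z_d))$ the product Möbius automorphism of $\D^d$ and $A^2(\D^d)$ the Bergman space with normalised product area measure, I would show
\[
f\in\mathcal{B}(\D^d)\iff\sup_{a\in\D^d}\|f\circ\varphi_a-f(a)\|_{A^2(\D^d)}<\infty,
\]
with comparable norms. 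The forward direction follows by writing
$(f\circ\varphi_a)(z)-f(a)=\sum_{j=1}^{d}\int_{0}^{1}D_jf\bigl(\ldots,\varphi_{a_j}(tz_j),\ldots\bigr)\varphi_{a_j}'(tz_j)z_j\,dt$
and estimating the integral slot by slot using $|D_jf(w)|(1-|w_j|^2)\leq\|f\|_{\mathcal{B}(\D^d)}$ together with the bounded-ness of $(1-|\varphi_{a_j}(tz_j)|^2)/(1-|z_j|^2)$ in $A^2$. The converse follows by slicing: freeze all but the $j$-th coordinate and apply the one-variable Garsia theorem in that slice, uniformly in the frozen variables.

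For the power trick, assume $f^n\in\mathcal{B}(\D^d)$ and, for each $a\in\D^d$, set $u_a:=f\circ\varphi_a-f(a)$, so that for every $m\geq 1$ one has the factorisation
\[
(f\circ\varphi_a)^m-f(a)^m=u_a\,Q_m,\qquad Q_m:=\sum_{j=0}^{m-1}(f\circ\varphi_a)^{j}f(a)^{m-1-j}.
\]
Step 1 applied to $f^n$ gives $\sup_{a}\|u_a\,Q_n\|_{A^2(\D^d)}<\infty$; the goal is the analogous statement for every $m\leq n$. Iterating the one-dimensional John--Nirenberg inequality one coordinate at a time upgrades the $A^2$-bound on $u_aQ_n$ to a uniform exponential integrability statement $\sup_{a\in\D^d}\int_{\D^d}\exp(\varepsilon|u_aQ_n|)\,dA<\infty$ for some small $\varepsilon>0$. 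Subharmonicity of $|f\circ\varphi_a|^{2k}$ in each variable then transfers this to uniform-in-$a$ bounds on every $L^p$-norm of $u_a$ and of $Q_m$, and H\"older's inequality produces $\sup_{a}\|u_aQ_m\|_{A^2(\D^d)}<\infty$ for each $m\leq n$. Reading Step 1 backwards concludes that $f^m\in\mathcal{B}(\D^d)$.

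The main obstacle will be the polydisc John--Nirenberg-type step that drives the second part. In one variable this is the classical fact that the analytic-BMO version of Bloch gives exponential integrability of $f\circ\varphi_a-f(a)$ uniformly in $a$; in the polydisc, however, the relevant BMO theory is multi-parameter, and the exponential bound is not uniform in the centre $a\in\D^d$ without some care. The cleanest route is an induction on $d$: freeze $d-1$ coordinates, run the one-variable Aleman argument in the remaining coordinate with constants depending only on $\|f^n\|_{\mathcal{B}(\D^d)}$, and then integrate over the frozen $\D^{d-1}$. Once this coordinatewise iteration is in place, the algebraic identities in the power trick and the Cauchy--Schwarz/H\"older estimates are routine.
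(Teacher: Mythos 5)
Your Step 1 (the Garsia-type characterisation of $\mathcal{B}(\D^d)$ via $\sup_{a}\|f\circ\Phi_a-f(a)\|_{A^2(\D^d)}$) is exactly the lemma the paper proves and uses, so that part is fine. The genuine gap is in your Step 2. From the uniform bound on $u_aQ_n=(f\circ\varphi_a)^n-f(a)^n$ you claim to extract uniform-in-$a$ bounds on every $L^p$-norm of $u_a$ \emph{and} of $Q_m$ separately, and then conclude by H\"older. This cannot work as stated: $Q_m=\sum_{j=0}^{m-1}(f\circ\varphi_a)^jf(a)^{m-1-j}$ contains powers of $f(a)$, and $|f(a)|$ is in general unbounded as $a\to\partial\D^d$ (only $f^n$ is assumed Bloch, so $|f(a)|$ grows like a power of $\log\frac{1}{1-|a_j|}$); hence no uniform $L^p$ bound on $Q_m$ is available. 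Worse, a uniform bound on $\|u_a\|_{L^p}$ is essentially the $m=1$ case of the theorem itself (it is the Garsia-type norm of $f$), so deriving it at this stage is circular. What exponential integrability of $u_aQ_n$ actually gives you is $\|f\circ\varphi_a\|_{L^q}\lesssim |f(a)|+C$, and subtracting $f(a)$ then loses the cancellation you need: controlling $(f\circ\varphi_a)^n-f(a)^n$ does not control $(f\circ\varphi_a)^m-f(a)^m$ by any factorwise or pointwise estimate (think of points where $f\circ\varphi_a$ is a nontrivial $n$-th root of unity times $f(a)$). Also, as a side remark, a single uniform $A^2$ bound does not by itself trigger John--Nirenberg; for Bloch symbols the exponential area-integrability comes from the logarithmic pointwise growth estimate, so that step is repairable, but it does not rescue the separation of $u_a$ from $Q_m$.

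The cancellation has to be produced by an integrated, not factorwise, argument, and this is where the paper's proof diverges from yours: after the Garsia-type lemma it reduces to $a=0$, notes that $f^k(0)$ is the mean of $f^k$ over $\D^d$, so Pythagoras in $A^2(\D^d)$ gives $\|f^k-f^k(0)\|^2=\|f^k\|^2-|f(0)|^{2k}$, and then combines the power-mean inequality $\|f^m\|_{A^2}^{1/m}\leq\|f^n\|_{A^2}^{1/n}$ with the elementary inequality $(x+y)^{\alpha}\geq x^{\alpha}+y^{\alpha}$ for $\alpha=n/m\geq1$ to get the quantitative estimate $\|f^m\circ\Phi_a-f^m(a)\|^{1/m}\leq\|f^n\circ\Phi_a-f^n(a)\|^{1/n}$ uniformly in $a$. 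No John--Nirenberg, no $L^p$ bootstrapping, and no splitting of $u_aQ_m$ is needed. I would either adopt that Hilbert-space argument or find a genuinely different mechanism that produces the cancellation between $f\circ\varphi_a$ and $f(a)$; as written, your second step does not.
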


Before proving the theorem, we will need to establish a analogue of Garsia's norm for the polydisc setting. In a similar fashion as in the unit disc case, we let $A^2(\D^d)$ be the Hilbert space of holomorphic functions in $\D^d$ such that
\[
\frac{1}{\pi}\int_{\D^d}|f(z)|^2dA_d(z)<\infty.
\]
Similarly, we need to state a result regarding the automorphisms of the polydisc $\D^d$. See \cite[Lemma 3.4]{queffi}.
\begin{lema}\label{automorf}
The analytic map $\Phi:\D^d\to\D^d$ belongs to \emph{Aut}$(\D^d)$ if, and only if,
\begin{equation*}
    \Phi(z)
    =
    \left(
\Phi_1(z),\ldots,
\Phi_d(z)
    \right),\quad \text{where 
 }\Phi_j(z)=\varepsilon_j\frac{z_{\sigma(j)}-a_j}{1-\overline{a_j}z_{\sigma(j)}},
\end{equation*}
for some permutation $\sigma$ of $\{1,\ldots,d\}$, $(a_1,\ldots,a_d)\in\D^d$ and complex signs $\varepsilon_1,\ldots,\varepsilon_d$.
\end{lema}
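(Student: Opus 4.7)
The plan is to prove the two directions separately. For the easy ``if'' direction, each coordinate $\Phi_j(z)=\varepsilon_j\varphi_{a_j}(z_{\sigma(j)})$ is a rotated M\"obius transformation of the single variable $z_{\sigma(j)}$, hence $\Phi$ maps $\D^d$ biholomorphically onto itself, with inverse obtained by inverting $\sigma$ and each $\varphi_{a_j}$ individually. This amounts to a direct verification.

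For the converse, given $\Phi\in\mathrm{Aut}(\D^d)$, I would first normalize. Setting $b=\Phi^{-1}(0)\in\D^d$ and composing with the automorphism $M(z)=(\varphi_{b_1}(z_1),\ldots,\varphi_{b_d}(z_d))$ (which belongs to the class built in the first paragraph and sends $0$ to $b$), the composition $F:=\Phi\circ M$ is an automorphism of $\D^d$ with $F(0)=0$. It then suffices to show that any such $F$ is a signed permutation $F(z)=(\varepsilon_1 z_{\sigma(1)},\ldots,\varepsilon_d z_{\sigma(d)})$, since composing back with $M^{-1}$ recovers $\Phi$ in the stated form.

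To identify $L:=F'(0)$, I would use that the infinitesimal Carath\'eodory metric on $\D^d$ at $0$ coincides with the $\ell^\infty$-norm on $\C^d$, a standard fact for product domains. Automorphisms fixing $0$ induce linear isometries of this infinitesimal metric, so $L$ is a linear isometry of $(\C^d,\|\cdot\|_\infty)$. Writing $L_i(v)=\sum_j\ell_{ij}v_j$ and requiring $|L_i(v)|=1$ for every $v\in\T^d$, a short argument (vary a single coordinate $v_k$ over $\T$ while fixing the others and observe that a non-constant trigonometric polynomial cannot have constant modulus) forces at most one $\ell_{ij}$ to be nonzero in each row, with that entry of modulus one. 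Thus $L$ is a signed permutation matrix, and in particular $z\mapsto Lz$ is itself an automorphism of $\D^d$ of the claimed form.

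Finally, I would invoke H.~Cartan's uniqueness theorem: on a bounded domain, any holomorphic self-map fixing a point whose derivative there equals the identity must be the identity. Applying it to the automorphism $(z\mapsto Lz)^{-1}\circ F$ of $\D^d$, which fixes the origin and has derivative $I$ there, yields $F(z)=Lz$; unwinding the normalization $M$ then gives $\Phi$ in the asserted form. The main obstacle is the infinitesimal-metric step: identifying $F'(0)$ as an $\ell^\infty$-isometry relies on the Carath\'eodory--Kobayashi machinery, which is classical but non-trivial, whereas the subsequent classification of $\ell^\infty$-isometries as signed permutations and the concluding application of Cartan's theorem are both short.
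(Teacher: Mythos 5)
The paper itself does not prove this lemma: it is quoted directly from \cite[Lemma 3.4]{queffi}. So your attempt has to be measured against the classical proof of the polydisc automorphism theorem, and that is essentially what you reproduce: the elementary ``if'' direction, the normalization $F=\Phi\circ M$ with $F(0)=0$, the Schwarz/Carath\'eodory argument showing that $L=F'(0)$ is a linear isometry of $(\C^d,\|\cdot\|_\infty)$ (this step can even be done with the one-variable Schwarz lemma applied to $\lambda\mapsto F_j(\lambda v)$ and to $F^{-1}$, so no heavy machinery is really needed), and the conclusion via Cartan's uniqueness theorem followed by unwinding $M$ (note $\varphi_a^{-1}=\varphi_a$, and the sign $-1$ is absorbed into $\varepsilon_j$). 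This is a legitimate, self-contained route to the cited result.

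There is, however, one genuine jump in your write-up: the phrase ``requiring $|L_i(v)|=1$ for every $v\in\T^d$''. The isometry property only gives $\max_i|L_i(v)|=\|v\|_\infty$, so for $v\in\T^d$ you know $\max_i|L_i(v)|=1$, not that every coordinate of $Lv$ has modulus one — and the latter is exactly what your coordinate-varying (constant-modulus) argument needs. To close the gap, observe that in finite dimension the isometry $L$ is automatically bijective, that a linear bijection which is isometric in both directions maps the closed unit ball onto itself and hence maps extreme points to extreme points, and that the extreme points of the unit ball of $(\C^d,\|\cdot\|_\infty)$ are precisely the points of $\T^d$; therefore $L(\T^d)=\T^d$, and then your argument does force a single unimodular entry in each row, with invertibility making the supports a permutation. (Alternatively, the metric step could be replaced by Cartan's theorem on bounded circular domains, which gives directly that an automorphism of $\D^d$ fixing $0$ is linear, but the extreme-point argument, or an equivalent one, is still needed to classify the linear automorphisms.) With that one insertion your proof is complete.
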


\begin{lema}
    Let $f$ be an analytic function in $\D^d$ such that $f\in\mathcal{B}(\D^d)$ and let $\Phi:\D^d\to\D^d$ belong to $\emph{Aut}(\D^d)$. Then, $f\circ\Phi\in\mathcal{B}(\D^d)$.
\end{lema}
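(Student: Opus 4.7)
The plan is to reduce the claim to a direct computation via the chain rule, exploiting the very specific form of automorphisms of $\D^d$ given by Lemma \ref{automorf}. Write $\Phi=(\Phi_1,\dots,\Phi_d)$ with $\Phi_j(z)=\varepsilon_j\varphi_{a_j}(z_{\sigma(j)})$, where $\varphi_a(w)=\frac{w-a}{1-\overline{a}w}$ and $\sigma\in\mathfrak{S}_d$. The key observation is that each component $\Phi_j$ depends on only \emph{one} of the coordinates $z_{\sigma(j)}$, so the chain rule collapses to a single non-zero term.

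First I would fix $k\in\{1,\dots,d\}$ and compute
\[
D_k(f\circ\Phi)(z)=\sum_{j=1}^d (D_j f)(\Phi(z))\,D_k\Phi_j(z)=(D_{j_0}f)(\Phi(z))\,D_k\Phi_{j_0}(z),
\]
where $j_0=\sigma^{-1}(k)$ is the unique index such that $\Phi_{j_0}$ depends on $z_k$. Then I would use the explicit formula
\[
D_k\Phi_{j_0}(z)=\varepsilon_{j_0}\,\frac{1-|a_{j_0}|^2}{(1-\overline{a_{j_0}}z_k)^2},
\]
so that, taking absolute values and multiplying by $(1-|z_k|^2)$,
\[
|D_k(f\circ\Phi)(z)|\,(1-|z_k|^2)=|D_{j_0}f(\Phi(z))|\cdot\frac{(1-|a_{j_0}|^2)(1-|z_k|^2)}{|1-\overline{a_{j_0}}z_k|^2}.
\]

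The second step is to recognise the classical Möbius identity
\[
\frac{(1-|a_{j_0}|^2)(1-|z_k|^2)}{|1-\overline{a_{j_0}}z_k|^2}=1-|\varphi_{a_{j_0}}(z_k)|^2=1-|\Phi_{j_0}(z)|^2,
\]
the last equality using $|\varepsilon_{j_0}|=1$. Setting $w=\Phi(z)$, this yields
\[
|D_k(f\circ\Phi)(z)|\,(1-|z_k|^2)=|D_{j_0}f(w)|\,(1-|w_{j_0}|^2).
\]

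Since $\Phi$ is a bijection of $\D^d$, taking the supremum over $z\in\D^d$ and then the maximum over $k$ (noting $k\mapsto j_0=\sigma^{-1}(k)$ is a bijection of $\{1,\dots,d\}$) gives
\[
\max_{1\leq k\leq d}\sup_{z\in\D^d}|D_k(f\circ\Phi)(z)|(1-|z_k|^2)=\max_{1\leq j\leq d}\sup_{w\in\D^d}|D_jf(w)|(1-|w_j|^2)<\infty.
\]
This shows $f\circ\Phi\in\mathcal{B}(\D^d)$, and in fact the Bloch seminorm is \emph{preserved} (not merely bounded) by composition with automorphisms. There is no real obstacle here; the argument is a one-variable computation performed coordinate-wise, made possible by the separable structure of $\mathrm{Aut}(\D^d)$ given in Lemma \ref{automorf}.
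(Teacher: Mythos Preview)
Your proof is correct and follows essentially the same route as the paper: chain rule followed by the classical M\"obius identity $(1-|a|^2)(1-|w|^2)/|1-\overline{a}w|^2=1-|\varphi_a(w)|^2$, applied coordinatewise thanks to the structure of $\mathrm{Aut}(\D^d)$ from Lemma~\ref{automorf}. Your bookkeeping of the permutation (distinguishing $k$ and $j_0=\sigma^{-1}(k)$) is in fact cleaner than the paper's, which conflates the indices $j$ and $\sigma(j)$ somewhat; you also make explicit the stronger conclusion that the Bloch seminorm is actually invariant under $\mathrm{Aut}(\D^d)$.
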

\begin{proof}
Notice that to prove the result it suffices to establish the following 
\begin{equation*}
    |D_{j}(f\circ\Phi)(z)|(1-|z_j|^2)
    =
    |D_{j}f(\Phi(z))|(1-|\Phi_j(z)|^2).
\end{equation*}
Applying the derivation operator in the left hand-side
\begin{equation*}
   |D_{j}(f\circ\Phi)(z)|(1-|z_j|^2)
   =
    |D_{j}(f\circ\Phi)(z)||D_{j}\Phi(z)|(1-|z_j|^2).
\end{equation*}
The result follows from the following observation, in which we make use of Lemma \ref{automorf}
\begin{align*}
    |\Phi'_j(z_{\sigma(j)})|(1-|z_j|^2)=\frac{(1-|a_j|^2)(1-|z_{\sigma(j)}|^2)}{|1-\overline{a_j}z_{\sigma(j)}|^2}
    =
    1-\frac{|a_j-z_{\sigma(j)}|^2}{|1-\overline{a_j}z_{\sigma(j)}|^2}
    =1-|\Phi_j(z)|^2.
\end{align*}
Therefore,
\begin{align*}
    |D_{j}(f\circ\Phi)(z)|(1-|z_j|^2)
    &=
    |D_{j}(f)(\Phi(z))||\Phi'_j(z)|(1-|z_j|^2)\\
    &=|D_{j}(f)(\Phi(z))|(1-|\Phi_j(z)|^2).
\end{align*}
\end{proof}

\begin{lema}\label{invari}
Let $f$ be an analytic function in the polydisc $\D^d$. Then, $f$ belongs to the Bloch space $\mathcal{B}(\D^d)$ if and only if, 
\begin{equation*}
    \sup_{a\in\D^d}\|f\circ\Phi_a-f(a)\|_{A^2(\D^d)}<\infty.
\end{equation*}
\end{lema}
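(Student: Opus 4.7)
The plan is to argue the two implications separately, both relying on the Möbius invariance of the Bloch norm established in the preceding lemma.

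For the forward implication, I would fix $f \in \mathcal{B}(\D^d)$ and $a \in \D^d$. By the previous lemma, $f \circ \Phi_a \in \mathcal{B}(\D^d)$ with $\|f \circ \Phi_a\|_{\mathcal{B}(\D^d)} = \|f\|_{\mathcal{B}(\D^d)}$. Writing $g_a(z) = f(\Phi_a(z)) - f(a)$, which vanishes at $0$, I would express
\[
g_a(z) = \int_0^1 \frac{d}{dt} g_a(tz)\, dt = \sum_{j=1}^d z_j \int_0^1 D_j(f \circ \Phi_a)(tz)\, dt,
\]
and then use the Bloch bound $|D_j(f \circ \Phi_a)(tz)| \leq \|f\|_{\mathcal{B}(\D^d)}/(1-t^2|z_j|^2)$ to get
\[
|g_a(z)| \lesssim \|f\|_{\mathcal{B}(\D^d)} \sum_{j=1}^d \log\frac{1+|z_j|}{1-|z_j|}.
\]
Squaring, integrating over $\D^d$, and using that each univariate integral $\int_\D \log^2\frac{1+|w|}{1-|w|}\, dA(w)$ is finite (while the other factors integrate to $1$) will yield
\[
\|f \circ \Phi_a - f(a)\|_{A^2(\D^d)} \lesssim \|f\|_{\mathcal{B}(\D^d)}
\]
uniformly in $a$, which is the required bound.

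For the converse, assume that $M := \sup_{a \in \D^d} \|f \circ \Phi_a - f(a)\|_{A^2(\D^d)} < \infty$. Fix $a \in \D^d$ and $j \in \{1,\ldots,d\}$, and choose $\Phi_a$ as in Lemma \ref{automorf} with $\sigma = \mathrm{id}$ and $\varepsilon_j = 1$, so that $\Phi_a(z) = (\varphi_{a_1}(z_1),\ldots,\varphi_{a_d}(z_d))$ with $\varphi_{a_j}'(0) = -(1-|a_j|^2)$. The chain rule then gives
\[
D_j(f \circ \Phi_a)(0) = -(1-|a_j|^2)\, D_j f(a).
\]
The key point is that on the polydisc the monomials $\{z^{\alpha}\}$ form an orthogonal basis of $A^2(\D^d)$, and in particular $|D_j h(0)| \lesssim \|h\|_{A^2(\D^d)}$ for every $h \in A^2(\D^d)$ with $h(0)=0$ (the constant depends only on $\|z_j\|_{A^2(\D^d)}$, a dimensional constant). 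Applied to $h = f \circ \Phi_a - f(a)$, this yields
\[
(1-|a_j|^2)|D_j f(a)| \lesssim \|f \circ \Phi_a - f(a)\|_{A^2(\D^d)} \leq M.
\]
Taking the supremum over $a \in \D^d$ and $j$ gives $f \in \mathcal{B}(\D^d)$ with equivalent norms.

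The main obstacle is not really a deep one, but it is worth being careful about: the forward implication requires a clean estimate of the $A^2$-norm of a logarithmic tail on the polydisc, and the converse requires isolating the linear coefficient of $f \circ \Phi_a - f(a)$ from its $A^2$-norm, which is where orthogonality of the monomial basis on $\D^d$ comes in. Both points are standard polydisc adaptations of the one-variable Garsia-type argument, but care is needed because in the polydisc the Bloch condition involves a maximum over the coordinates rather than a single derivative.
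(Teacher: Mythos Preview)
Your proof is correct and follows essentially the same approach as the paper: integrate along rays and use the Möbius invariance of the Bloch norm to bound $|f\circ\Phi_a(z)-f(a)|$ by a logarithmic function whose $A^2(\D^d)$-norm is a dimensional constant, and for the converse extract the $j$-th linear coefficient of $f\circ\Phi_a-f(a)$ from its $A^2$-norm via orthogonality of monomials. The only cosmetic differences are your use of $\log\frac{1+|z_j|}{1-|z_j|}$ versus the paper's $-\log(1-|z_j|)$, and your slightly more explicit chain-rule bookkeeping.
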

\begin{proof}
We begin by showing the necessity. Hence, let $F(t)=f(zt)-f(0)$, with $0\leq t\leq1$ and $z\in\D^d$. Then,
\begin{align*}
    |f(z)-f(0)|=
    \left|\int_0^1
    F'(t)dt
    \right|
    &\leq\sum_{j=1}^d\int_0^1|z_j||D_jf(tz)|dt\\
    &
    \leq C
\sum_{j=1}^d\int_0^1|z_j|(1-t|z_j|)^{-1}
    \|f\|_{\mathcal{B}(\D^d)}dt\\
    &\leq -C\|f\|_{\mathcal{B}(\D^d)}\ln(1-|z|),
\end{align*}
where $(1-|z|)=(1-|z_1|)\cdots(1-|z_d|)$. Let $a\in\D^d$. Now, thanks the previous estimate, we have that (with $C>0$)
\begin{align*}
    |f\circ\Phi_a(z)-f(a)|\leq -C\|f\|_{\mathcal{B}(\D^d)}\ln((1-|z|)).
\end{align*}
Eventually, taking the $A^2(\D^d)$ norm in the latter inequality, we are led to
\begin{align*}
    \|f\circ\Phi_a-f(a)\|^2_{A^2(\D^d)}\leq
    C\|f\|_{\mathcal{B}^2(\D^d)}\int_{\D^d}|\ln(1-|z|)|^2dA_d(z)
\end{align*}
a simple computation allows us to check that the right-hand side integral is finite and the necessity follows.

\ 
Regarding the sufficiency, we use the fact that
\[
|Df_j(0)|\leq 2\|f\|_{A^2(\D^d)}, \quad 1\leq j\leq d.
\]
Therefore, for each fixed $a\in\D^d$, we replace $f$ with $f\circ\Phi_a-\Phi(a)$, so that
\[
(1-|a_j|^2)|D_jf(a)|\leq 2\|f\circ\Phi_a-\Phi(a)\|_{A^2(\D^d)},
\]
and the claim follows.
\end{proof}
Once we have these lemmas, the proof of Theorem \ref{dradicality} follows using an analogous argument to the one displayed in \cite{alem}, based on Hilbert spaces techniques. We recall it for the sake of completeness.
\begin{proof}[Proof of Theorem \ref{dradicality}]
In order to ease the reading of the proof, we shall write note $\Vert.\Vert_B$ for the Bloch norm of the Bloch space $\mathcal{B}(\D^d)$ in the polydisc and $\Vert .\Vert$ for the Bergman norm of the Bergman space $A^2(\D^d)$ of the polydisc. We will prove that, if $m\leq n$, we have that 
 $$(\Vert f^m\Vert_B)^{1/m} \leq (\Vert f^n\Vert_B)^{1/n}.$$
 Thanks to Lemma \ref{invari}, it suffices to prove that, for every $a\in \D^d$ and every analytic function $f$, we have that
 \begin{equation}\label{suf}
     (\Vert f^m\circ\Phi_a -f^{m}(a)\Vert)^{1/m} \leq (\Vert f^n\circ\Phi_a -f^{n}(a)\Vert)^{1/n}.
 \end{equation}
We can suppose that $a=0$. We shall also use Pythagora's theorem in the Hilbert space $A^2(\D^d)$:
\begin{equation} \label{pyth}  (\Vert f^k -f^{k}(0)\Vert)^{2}=\Vert f^k\Vert^2-|f(0)|^{2k}.\end{equation} 
Using this with $k=m$ and H\"older's inequality
\begin{align*}
    (\Vert f^n -f^{n}(0)\Vert)^{2}=\Vert f^n\Vert^2-|f(0)|^{2n}&\geq \Vert f^m\Vert^{2n/m}-|f(0)|^{2n}\\&=\big(\Vert f^m-f^{m}(0)\Vert^2+|f(0)|^{2m}\big)^{n/m}-|f(0)|^{2n}.
\end{align*}
We conclude using now the fact
$(x+y)^\alpha\geq x^\alpha+y^\alpha$ if $x,y\geq 0$ and $\alpha\geq 1$, for $\alpha=n/m$, which gives us that
$$(\Vert f^n -f^{n}(0)\Vert)^{2}\geq (\Vert f^m -f^{m}(0)\Vert)^{2n/m}$$
which gives us \eqref{suf} for $a=0$, which suffices to obtain the desired conclusion.

\end{proof}
\begin{proof}[Proof of Theorem \ref{th:rad}]
If $f^n$ belongs to $\text{Bloch}(\C_+)\cap\mathcal{D}_d$, then its Bohr lift $\mathcal{B}(f^n)$ belongs to $\text{Bloch}(\D^d)$ and the conclusion follows by an application of Theorem \ref{dradicality} and coming back to the space $\text{Bloch}(\C_+)\cap\mathcal{D}_d$, taking the inverse Bohr lift $\mathcal{B}^{-1}$.
\end{proof}

\end{document}